\documentclass[a4paper,12pt,reqno]{amsart}
\usepackage{txfonts}
\usepackage{amssymb}
\usepackage[mathscr]{eucal}
\usepackage{amsmath, amssymb}
\usepackage{amscd,amsthm}
\usepackage[dvips]{color}
\usepackage{multicol}
\usepackage[all]{xy}
\usepackage{graphicx}
\usepackage{soul}

\usepackage{color}
\usepackage{colordvi}
\usepackage{xspace}
\usepackage[colorinlistoftodos]{todonotes}

\usepackage[colorlinks,final,hyperindex]{hyperref}

\usepackage{tikz}
\usepackage[active]{srcltx}
\usetikzlibrary{matrix,arrows}

\newtheorem{thm}{Theorem}[section]
\newtheorem{cor}[thm]{Corollary}
\newtheorem{lem}[thm]{Lemma}
\newtheorem{prop}[thm]{Proposition}

\theoremstyle{definition}
\newtheorem{defn}[thm]{Definition}

\newtheorem{rk}[thm]{Remark}
\newtheorem{ex}[thm]{Example}
\newtheorem{defnlem}[thm]{Definition-Lemma}
\newtheorem{defnprop}[thm]{Definition-Proposition}
\DeclareMathOperator{\End}{End}        
\DeclareMathOperator{\Op}{Op}      
 
\DeclareMathOperator{\spec}{spec}           
\DeclareMathOperator{\supp}{supp}      
\DeclareMathOperator{\diam}{diam}      

\DeclareMathOperator{\TR}{TR}                 


\newcommand{\s}{\sigma}                       
\newcommand{\G}{\Gamma}                       
\newcommand{\ve}{\varepsilon}                       

\newcommand{\C}{\mathbb{C}}              
\newcommand{\D}{\mathbb{D}}               

\newcommand{\N}{\mathbb{N}}
\newcommand{\Z}{\mathbb{Z}}

\newcommand{\R}{\mathbb{R}}

\newcommand{\T}{\mathbb{T}}
\renewcommand {\s} {\sigma}

\newcommand{\cutoffint}{-\hskip -10pt\int}

\newcommand {\e} {{\varepsilon}}


\newcommand {\Ci} {{C^\infty}}

\newcommand{\wt}{\widetilde}

\newcommand{\la}{\lambda}                





\newcommand{\Upsi}{{\mathcal U}\Psi}
\newcommand{\ds}{\displaystyle}




 
\addtolength{\textheight}{+1.3cm} 
\addtolength{\voffset}{-0.5cm}
\addtolength{\textwidth}{+2.4cm}
\addtolength{\marginparwidth}{-50pt}
\addtolength{\hoffset}{-34pt}

\newcommand\reallywidehat[1]{\arraycolsep=0pt\relax%
\begin{array}{c}
\stretchto{
  \scaleto{
    \scalerel*[\widthof{\ensuremath{#1}}]{\kern-.5pt\bigwedge\kern-.5pt}
    {\rule[-\textheight/2]{1ex}{\textheight}} 
  }{\textheight} %
}{0.5ex}\\           
#1\\                 
\rule{-1ex}{0ex}
\end{array}
}

\usepackage{scalerel}
\usepackage{stackengine}

\newcommand\reallywidetilde[1]{\ThisStyle{%
  \setbox0=\hbox{$\SavedStyle#1$}%
  \stackengine{-.1\LMpt}{$\SavedStyle#1$}{%
    \stretchto{\scaleto{\SavedStyle\mkern.2mu\sim}{.5467\wd0}}{.7\ht0}%
  }{O}{c}{F}{T}{S}%
}}

\begin{document}

\title{Spectral $\zeta$-invariants lifted to  coverings}

\author{Sara  Azzali}
\address{Institute of Mathematics, Universit\"at Potsdam, 
 Campus II - Golm, Haus 9\\
Karl-Liebknecht-Stra{\ss}e 24-25\\
D-14476 Potsdam, Germany }
\email{azzali@uni-potsdam.de}

\author{Sylvie Paycha}
\address{Institute of Mathematics, Universit\"at Potsdam, 
 Campus II - Golm, Haus 9\\
Karl-Liebknecht-Stra{\ss}e 24-25\\
D-14476 Potsdam, Germany\\
(On leave from the Universit\'e Blaise Pascal, Clermont-Ferrand)}
\email{paycha@math.uni-potsdam.de}
\subjclass{Primary 47G30, 58J42, 	58J40 ; Secondary  58J28,19K56}

\keywords{Locality, Zeta-regularised traces, Pseudodifferential Operators, Wodzicki Residue, $L^2$-invariants}

\begin{abstract}

  \small{The canonical trace and the Wodzicki residue on classical pseudodifferential operators on a closed manifold are characterised by their locality and shown to be preserved under lifting to  the universal covering as a result of their local feature.  
 As a consequence, we lift a class of spectral $\zeta$-invariants using lifted defect formulae which express discrepancies of  $\zeta$-regularised traces 
 in terms of Wodzicki residues.  We derive Atiyah's $L^2$-index theorem as an instance of the $\Z_2$-graded generalisation of the canonical lift of spectral $\zeta$-invariants and we show that certain  lifted spectral $\zeta$-invariants for geometric operators are integrals of Pontryagin and Chern forms.} 

\end{abstract}

\maketitle 

\section*{Introduction}

A differential operator $A$ on a closed manifold $M$ 
lifts to a differential operator $\wt A$ on its universal covering $\wt M\to M$. This is due to the locality property of  differential operators which preserve the support of the sections they act  on. This lifting property does not extend to  general pseudodifferential operators  which are  only pseudo-local i.e., they only preserves the singular support of the sections. Hence arises the problem of lifting complex powers $ Q^{-z}$ involved in spectral $\zeta$-functions
\begin{equation}\zeta_{A,Q}(z):={\rm TR}(A\, Q^{-z})\ .\label{eq:zeta1}\end{equation}
where $A$ and $Q$ are differential operator on $M$, and ${\rm TR}$ is the canonical trace.
Nevertheless, we prove  that spectral $\zeta$-invariants 	\begin{equation}\label{eq:zetaintro}\zeta_{A,Q}(0):={\rm fp}_{z=0}\left( {\rm TR}(A\, Q^{-z})\right)\end{equation} corresponding to the constant term of the Laurent expansion of \eqref{eq:zeta1}   canonically lift to coverings (see (\ref{eq:diffliftedregtraces}) in Theorem \ref{thm:liftedregtraces}).
   This results from a detailed  analysis of the intertwining between the pseudo-locality of the complex powers and the locality of the canonical trace on non-integer order operators.  In our approach,  the locality  of spectral $\zeta$-invariants is only an instance of the more general locality  expressed by defect formulae. Another central result of the paper  are $L^2$-counterparts of such defect formulae (\ref{eq:introlifteddefect2}). A natural application is the locality of  Atiyah's $L^2$-index, which is expressed as a $\Gamma$-Wodzicki residue (\ref{eq:indres2}). \\
   Our approach can be summarised as follows. We build: 
   	\begin{itemize}
		\item  a holomorphic germ of pseudifferential (and hence pseudo-local) operators $A(z)$ (of holomorphic order $\alpha(z)$) which at zero is the differential (and hence {\it local}) operator $A$;
		\item the corresponding meromorphic germ ${\rm TR}(A(z))$ of functions built from the {\it local linear form} given by the canonical trace;
		\item the {\it local} invariant is obtained as the value at $z=0$ of this germ of functions in terms of the $1$-jet of the germ of operators 
		\[\lim_{z\to 0}{\rm TR}(A(z))=-\frac{1}{\alpha^\prime(0)}\,{\rm Res}(A^\prime(0)).\]
		\end{itemize} 
   
To achieve our goal,  following Shubin \cite{Shub}, we view pseudodifferential operators as \lq\lq small perturbations\rq\rq{}   of  pseudodifferential operator with finite propagation, more precisely those which are  $\ve$-local for some small enough $\ve>0$. Such operators, which fall into the more general class of quasi-local operators introduced by Roe \cite{Roe2}, are properly supported and hence determined by their symbol.  Quasi-local operators  can   roughly  be viewed as operators with controlled propagation at infinity, see \cite{E} for a detailed discussion. They bare the advantage over finite propagation operators, that they are stable under functional calculus, a property which is not needed here.  An $\ve$-local pseudodifferential operator modifies the support of the sections it is acting on, by at most the distance $\ve$. 
The fact that a differential operator  preserves the supports is therefore confirmed by the fact that it is  $0$-local.\\
Choosing $\ve$ small enough, one can lift without ambiguity an $\ve$-local operator $A_0$ to an $\ve$-local operator $\wt A_0$. The lifted operator  is a (uniformly) properly supported pseudodifferential operator, and hence also defined in terms of its symbol $\sigma(\wt A_0)$, which is the lifted     symbol  $\sigma(\wt A_0)=  \reallywidetilde{\sigma( A_0)}$ of the original operator.\\  To go from $\ve$-local to a general classical pseudodifferential operator $A$ on a closed manifold, we observe that the latter differs from a   $\ve$-local classical pseudodifferential operator $A_0$ by an operator   with  smooth kernel supported outside the diagonal (see Proposition \ref{prop:ShubinTh1}),  so it lies in the  equivalence class  $[A_0]_{\rm diag}$ of $A_0$ for the equivalence relation  on classical pseudodifferential operators on the base manifold $M$
$$A\underset{ \rm \small diag}{\sim} B\Longrightarrow A-B \quad \text{has a  smooth kernel supported outside the diagonal}.$$  
Clearly, $\sigma(A)\sim \sigma(A_0)$ and $\reallywidetilde{\sigma(A)}\sim  \sigma(\wt A_0)$ for any $ A $ in $[A_0]_{\rm diag}.$    
 The following 
diagramme, where  ${\mathcal A}$, ${\mathcal A}_0$ are $\Gamma$-invariant operators on the covering, $\Gamma$ being the fundamental group, $\pi_\sharp {\mathcal A }_0$  the   projected   operator onto the base manifold,    presents the notations in a compact form:

\begin{equation}\label{eq:diagramme}
\begin{xy}
  \xymatrix{
     \mathcal A  \underset{ \rm \tiny diag}{\sim} \hspace{-1cm}  & {\mathcal A}_0:= \wt{A_0} \ar[d]^{\pi_\sharp} & \hspace{-.7cm} (\ve-\text{local})\\
     A \underset{ \rm \tiny diag}{\sim} \hspace{-1cm} &   A_0:=\pi_\sharp {\mathcal A}_0 \ar@<1ex>[u]^{\pi^\sharp}& \hspace{-.7cm}(\ve-\text{local})
  }
\end{xy}
\end{equation}

Note that the operator $\pi_\sharp\colon {\mathcal A}_0\longrightarrow \left(s\longmapsto \pi_*\left({\mathcal A}_0\pi^*(s)\right)\right)$ is well-defined in view of the equivariance and $\ve$-locality of ${\mathcal A}_0$ (see Proposition \ref{prop:ShubinTh1})).

Alongside the pseudo-locality of pseudodifferential operators, the other essential ingredient in our approach is the use of  {\bf local}  linear forms defined on a class of classical pseudodifferential operators   (see Definition \ref{defn:locallambda}). These only detect the symbol of an operator  and  are therefore constant on  equivalence classes $[A_0]_{\rm diag}$ and    hence constant along the horizontal lines of the above diagramme. 
{   More precisely,  a  local linear form $\Lambda$  reads:}
\begin{equation}\label{eq:Lambdaintro}
\Lambda( [A]_{\rm diag}):=\Lambda(A)=\int_M \Lambda_x(A)\, dx:=\int_M \lambda\left({\rm tr}\left(\sigma(A)(x,\cdot)\right)\right)\,dx,
\end{equation}
   $\lambda$ being a linear form on an appropriate class of scalar valued symbols,   ${\rm tr}$   the fibrewise trace on the endomorphism bundle in which  the symbol $\sigma(A)(\cdot,\xi)$ of $A$ lies for any $\xi$ in the cotangent bundle to the underlying manifold.\begin{itemize}
   	\item
     Our first main result is Theorem \ref{thm:uniqueness}, which states that any  continuous  local linear form on the class  of classical pseudodifferential operators with integer order (resp. on the class of classical pseudodifferential operators   non-integer order)
   is proportional to the  Wodzicki residue Res (see \eqref{eq:Resint}) (resp. the canonical trace TR (see \eqref{eq:TRint}))
$$
{\rm Res}(A)=\int_M {\rm Res}_x(A)\, dx;\quad \left(\text{resp.}\quad {\rm TR}(A)=\int_M {\rm TR}_x(A)\, dx\right), 
$$
 acting respectively  on the algebra of classical pseudodifferential operators with integer order and on the class of classical pseudodifferential operators with  non-integer order. The densities ${\rm Res}_x(A)\, dx$ (resp. ${\rm TR}_x(A)\, dx$) are defined  by means of the residue res (resp. the canonical integral $\cutoffint_{\R^n}$) on integer order (resp. on non-integer order) scalar symbols.
 
 A local linear form  (\ref{eq:Lambdaintro})  can be lifted from a $\ve$-local operator $  A_0$ on $M$  to its lift $\wt A_0$ on $\wt M$  by \begin{equation}
 \label{eq:Lambdaintlift}
  \Lambda_\Gamma(\wt A_0)= \int_F \lambda\left({\rm tr}\left( \sigma(\wt A_0)(x,\cdot)\right)\right)\,dx,
  \end{equation} where $F$ is a fundamental domain for the action of the fundamental group. It  further lifts  to any $\Gamma$-invariant operator $\mathcal A$  on the covering; indeed $\mathcal A$ lies in the class $[\wt A_0]_{\rm diag}$ of some lifted $\ve$-local operator $\wt{A_0}$. Since $ \Lambda_\Gamma$ is constant on such a class, we set \begin{equation}\label{eq:Lambdawt}
 \Lambda_\Gamma(\mathcal A):= \int_F \lambda\left({\rm tr}\left( \sigma(\wt A_0)(x,\cdot)\right)\right)\,dx.
\end{equation}  
Prototypes are 
the $\Gamma$-residue (resp. $\Gamma$-canonical trace) (see Proposition \ref{prop:ResTRliftedA})
$${\rm Res}_\Gamma(\mathcal A):= \int_F{\rm Res}_{\wt x}(\wt A_0)\,d\wt x,\quad \left(\text{resp.} {\rm TR}_\Gamma(\mathcal A):= \int_F{\rm TR}_{\wt x}(\wt A_0)\,d\wt x\,\right),$$
obtained from integrating the residue and canonical trace densities on $F$.

Whereas the canonical trace lifts to coverings due to its local feature,  the   {\bf regularised trace} evaluated at $z=p$ of   a  holomorphic family $A(z)$ of classical pseudodifferential operators on $M$, defined as the   Hadamard finite part
$$
{\rm fp}_{z=p} {\rm TR}(A(z)):=\lim_{z\to p}\left({\rm TR}(A(z))-\frac{{\rm Res}_{z=p}\left({\rm TR}(A(z))\right)}{z-p} \right),
$$
  (here ${\rm Res}_{z=p}$ stands for the complex residue at $p$)    is  generally non local  and does not a priori lift to coverings. However,  {\bf defect formulae}, which express the  discrepancies of regularised traces  in terms of  the Wodzicki residue  and therefore also enjoy a local feature,  do  lift to   coverings (Theorem \ref{thm:KVPScov}).  More precisely, if $A(p)$ has a well-defined canonical trace TR$(A(p))$, the trace defect formula (Theorem \ref{thm:KVPS}, borrowed from \cite{KV} and \cite{PS}), relates the regularised trace $ {\rm fp}_{z=p} {\rm TR}(A(z)) $ 
 with the (extended) residue ${\rm Res}\left(\log A^\prime(p)\right)$ of the derivative of the family at this pole (see (\ref{eq:PSclassicalop})),
\begin{equation}\label{eq:introdefect}{\rm fp}_{z=p} {\rm TR}(A(z))= {\rm TR}(A(p))+\frac{1}{q}\,{\rm Res}\left( A^\prime(p)\right),\end{equation}
where the operators $A(z)$ have order $a-qz$ for some   given positive $q$.\\
If $A(p)$ is  a differential operator, then ${\rm TR}(A(p))=0$ and  (\ref{eq:introdefect}) reduces to a local expression of the regularised trace \begin{equation}
\label{eq:defectformulaintro}
{\rm fp}_{z=p} {\rm TR}(A(z))= \frac{1}{q}\,{\rm Res}\left(\log A^\prime(p)\right)\end{equation} in terms of the Wodzicki residue.\\
The trace defect formula \eqref{eq:defectformulaintro} is central to our approach since it relates regularised traces (on the l.h.s. of the above formula)  with  Wodzicki residues (on the r.h.s. of the above formula) and yields index type theorems as an application. Wodzicki residues, which only depend on one homogeneous component of the symbol and not the whole symbol, ar local. In contrast,    regularised traces built from the canonical trace, apriori depend on the whole symbol so are not expected to be local.

\item Our second main result is Theorem \ref{thm:KVPScov} which     yields the lifted analogue of the (more general) trace-defect formula (\ref{eq:introdefect})
\begin{equation}\label{eq:introlifteddefect}{\rm fp}_{z=p} {\rm TR}_\Gamma(\mathcal A(z))= {\rm TR}_\Gamma(\mathcal A(p))+\frac{1}{q}{\rm Res}_\Gamma\left(  \mathcal A^\prime(p)\right),\end{equation} for a holomorphic family   $\mathcal A(z)$   of $\Gamma$-invariant operators on the covering, such that      $\mathcal A(p)$ at the point $p$ has a   well-defined $\Gamma$-canonical trace ${\rm TR}_\Gamma(\mathcal A(p))$. If $\mathcal A(p)$ is a differential operator, then (\ref{eq:introlifteddefect}) reduces to \begin{equation}\label{eq:introlifteddefect2}{\rm fp}_{z=p} {\rm TR}_\Gamma(\mathcal A(z))= \frac{1}{q}{\rm Res}_\Gamma\left(  \mathcal A^\prime(p)\right).\end{equation}
 Corollary  \ref{cor:KVPScomparison}, which is useful for applications, then says that if $\mathcal  A(z)$ is a holomorphic family on the covering and if there exists a   holomorphic family $A(z)$  of $\ve$-local operators  on $M$, such that the difference $\mathcal  A (p)-\wt{A(p)}$  at a point $p$ has a smooth kernel, then the map $z\mapsto {\rm TR}_\Gamma\left(\mathcal  A(z)\right)- {\rm TR}_\Gamma\left(\wt{A(z)}\right)$ is holomorphic at point $p$
and
\begin{equation}\label{eq:IntroRestildeAz}
{\rm fp}_{z=p}{\rm TR}_\Gamma\left(\mathcal  A (z)\right)-{\rm fp}_{z=p}{\rm TR}\left(A(z)\right)= {\rm Tr}_\Gamma(\mathcal  A(p)-\wt{A(p)}).
\end{equation} 
We apply (\ref{eq:IntroRestildeAz}) to the holomorphic families  $A(z)= P(\mathbf D)\, h( \mathbf  \Delta)\, Q_\ve( \mathbf \Delta)^{-z}$ on $M$ and  \hfill \break \noindent $  \mathcal A(z)=P(\wt {\mathbf D})\, h\left(Q_\ve(\wt{\mathbf \Delta})\right)\, Q_\ve(\wt{\mathbf \Delta})^{-z}$ on $\wt M$.  Here   $\mathbf  D$ is a Dirac-type operator,  and consequently $\mathbf \Delta:=\mathbf  D^2$ a Laplace-type operator,   $Q_\ve(\mathbf \Delta)$ defined in (\ref{eq:Qeps}) for some $\ve>0$  is a smooth deformation of   $\mathbf \Delta$, $P$  is a polynomial,  and $h$ some measurable function  on a contour around the spectrum of $Q_\ve(\mathbf \Delta)$. \\
\item  This yields  our third main result, Theorem \ref{thm:liftedregtraces}, which compares the corresponding $Q_\ve(\mathbf \Delta)$-regularised trace of $P(\mathbf D)\, h\left(\mathbf \Delta\right)$ and the $Q_\ve(\wt {\mathbf \Delta})$-regularised trace  of $P(\wt{\mathbf D})\, h\left(Q_\ve(\wt{\mathbf \Delta})\right)$.  \begin{itemize}
\item In the  $\Z_2$-graded case and  for $P\equiv 1$,  $h\equiv 1$ this gives back  Atiyah's  $L^2$-index theorem
(Corollary \ref{cor:indres2}).
\item In the non-graded case and for $P(x)=x$, $h(x)= x^{-\frac{1}{2}}$,   assuming both operators $D$  and its lift $\wt D$ to be invertible,  the above constructions    show that the  eta-invariant of the lifted Dirac operator 
differs from   the eta-invariant of the Dirac operator $D$ on the base manifold  by  an ordinary $\Gamma$-trace ${\rm Tr}_\Gamma( \wt A_0-\mathcal A)$  of the difference of two $\Gamma$-invariant operators, one of which $\wt A_0$, is the lift of an $\ve$-local operator $A_0\in [A]_{\rm diag}$ (Corollary \ref{cor:eta}).
\end{itemize} 

Theorem \ref{thm:geometricop} discusses the case of geometric operators showing that the lifted $\zeta$-functions correspond to  integrals of densities generated by Pontrjagin forms on the fundamental domain and Chern forms on the auxillary bundle.\\
\end{itemize}
One  advantage of our approach  is  that it yields the $L^2$-Atiyah  theorem as an instance of the much more general {\it lifted trace defect formulae}. Here is the general scheme of the argument. Theorem \ref{thm:KVPScov} gives the lifted trace defect formulae.  Theorem  \ref{thm:liftedregtraces} compares $\zeta$-regularised traces of  operators with the $\zeta$-regularised traces of their lifted counterparts using the locality property of the only two {\it local linear forms} characterised in Theorem   \ref{thm:uniqueness}-- the canonical trace and the Wodzicki residue. Corollary \ref{cor:indres2} gives the $L^2$-index theorem combining the two previous ingredients.

The above arguments make use of  functions of pseudodifferential operators, in particular their complex power and the related logarithm. Even though the constructions are similar to the ones  for operators on closed manifolds, special care is to be taken in the open manifold case.  In  Section \ref{sec:PDOs}, we first review various  classes of pseudodifferential operators on  open manifolds, soon specialising to coverings. The essential difference between the various classes lies in the smoothing part, to which we therefore dedicate the first section --Section \ref{sec:smoothing}-- of the paper. We then relate pseudodifferential operators on coverings to pseudodifferential operators on the associated groupoid (Appendix \ref{sec:groupoids}) and the associated Hilbert module bundle (Appendix \ref{sec:appHM}), thereby  relating  constructions of complex powers and logarithms  on    groupoids and Hilbert module bundles with the ones on coverings presented here.   
 
 \bigskip
\newpage 
\tableofcontents

\setcounter{section}{0}

\vspace{-1.3cm}

\allowdisplaybreaks
  
\vfill \eject \noindent
 \section{$\Gamma$-invariant operators on coverings and functional calculus}

 \subsection{Operators with smooth kernels}
\label{sec:smoothing}
  Let $X$ be an $n$-dimensional manifold and $F\to X$ a vector bundle over $X$ of rank $k$.
To a linear operator $A\colon C^\infty_c(X, F)\to C^\infty(X, F)$ we assign its Schwartz kernel denoted by $K_A$, which is a distributional section of the bundle $F\boxtimes F$   over $M\times M$. 
The support of $A$ is the smallest subset of $X\times X$ on the complement of which $K_A$ vanishes as a distribution.
\begin{defn} Let $\Psi^{-\infty}(X,F)$ be the space of  linear operators   $A\colon C^\infty_c(X, F)\to C^\infty(X, F)$ with smooth Schwartz  kernel.
\end{defn}

Sobolev spaces will be useful to introduce another class of operators;  in order to have Sobolev spaces at hand, we henceforth  assume that our manifold $X$ has \emph{bounded geometry} \cite{Shu, MS, Kor}, a property verified by covering spaces of interest to us.
\subsubsection{Smoothing operators on manifolds of bounded geometry}\begin{defn}  
A Riemannian manifold $(X,g)$ is said to have \emph{bounded geometry} if
\begin{itemize}
\item it has positive injectivity radius (there is $r>0$ s.t. the exponential map is a diffeomorphism on $B(0,r)\subset T_x X$, $\forall x\in X$);
\item every covariant derivative of the Riemannian curvature tensor is bounded.
\end{itemize}
In the same way, a Hermitian vector bundle $F\to X $ has bounded geometry if every covariant derivative of the curvature is bounded.
\end{defn}
\begin{ex}
 Lie groups or homogeneous spaces with invariant metrics, compact Riemannian manifolds, regular $\Gamma$-covering of compact Riemannian manifolds    endowed with the induced Riemannian structure, all provide examples of manifolds of bounded geometry.
\end{ex}

We now assume the bundle $F\to X$ to be  of bounded geometry. The fundamental property is the existence of a \lq\lq good" partition of unity, which  allows to define Sobolev spaces  $H^s(X,F)$, see for instance  \cite[Lemmas 1.3, 3.22, and (1.3)]{Shu} and \cite[Definition 3.23]{Sch}.

\begin{rk}
 The Banach space structure of $H^s(X, F)$ is independent of the choices in the definition (see \cite[Lemma 3.24]{Shu}).
 Just as in the   case of closed manifolds,  Sobolev spaces can alternatively be defined by means of a (uniformly) elliptic operator, see \cite[Lemma 4.29 and Corollary 4.30]{Sch} for the comparison with this definition. 
\end{rk}

Let $H^\infty(X,F)=\cap_{k\in \N} H^k(X,F)$ denote the projective limit of $H^{k}(X,F), k\in \N$ and let  $H_\iota^{-\infty} (X,F)\supset \cup_{k\in \N} H^{-k}(X,F)$ denote the regular inductive limit of $H^{-k}(X,F),k\in \N$. Then  define
 (see e.g. \cite[Definition 5.3]{Roe2} and \cite[Lemma 2.13]{E}
\begin{eqnarray}\label{def:Ssmooth}
{\mathcal S}\Psi^{-\infty}(X,F)&:=&{\mathcal L}\left(H_\iota^{-\infty}(X,F),H^\infty(X,F)\right)\\ \nonumber&=&\cap_{(k,l)\in \N^2} {\mathcal L}\left(H^{-k}(X,F),H^l(X,F)\right)\\ \nonumber
& =&\cap_{(s,t)\in \R^2} {\mathcal L}\left(H^s(X,F),H^t(X,F)\right),
\end{eqnarray}
 where ${\mathcal L}(A,B) $ stands for continuous linear operators from a topological space $A$ to a  topological space $B$.
The notation we chose is inspired by Shubin, who calls these operators  $\mathcal S$-smoothing \cite[Def. 1, Ch. 3]{ShuG}.
By  \cite[Theorem 3.5]{Va},  an operator which smoothens sections has a smooth kernel, which leads to the following inclusion
  \begin{equation}
  \label{eq:Vaillant}
  {\mathcal S}\Psi^{-\infty}(X,F) \subseteq \Psi^{-\infty}(X,F).
  \end{equation}
 Following \cite{Shu2} and \cite{Roe2}, we set the following definition.
\begin{defn}
\label{def:Uinfty} Let $\Upsi^{-\infty}(X,F)$ denote the space of linear operators $A:  C^\infty_c(X,F) \to \Ci(X,F)$ with smooth kernel $K_A$ satisfying the following uniform boundedness condition:  for any multiindices $\alpha, \beta$    
$$\Vert \partial_x^\alpha \partial_y^\beta K_A(x,y)\Vert\leq C_{\alpha,\beta}\quad \forall (x,y)\in X\times X$$  for some positive constant $C_{\alpha,\beta}$.  
 \end{defn} 
Proposition 2.9 in \cite{Roe2} yields a refinement of  (\ref{eq:Vaillant}), namely \begin{equation}\label{eq:Roe}{\mathcal S}\Psi^{-\infty}(X,F)\subseteq \Upsi^{-\infty}(X,F).\end{equation}
 The uniformity follows from uniform estimates that naturally arise in the context of bounded geometry as they do for closed manifolds.
\begin{rk}If $X$ is a closed manifold, the three above spaces coincide:
$${\mathcal S}\Psi^{-\infty}(X,F)=\Upsi^{-\infty}(X,F)=\Psi^{-\infty}(X,F),
$$
since the equality in \eqref{eq:Vaillant} holds. Indeed any linear operator  $A\colon C^\infty_c(X, F)\to C^\infty(X, F)$ with smooth Schwarz kernel $K_A$ is smoothing when $X$ is closed as can be seen on direct inspection from the formula $(Au)(x)=\int_X K_A(x,y)u(y)dy$. 
\end{rk}

 The notion of properly  supported 
 operators recalled in Definition \ref{defn:properly supported}, extends in a straightforward manner to linear operators $A:C^\infty_c(X, F)\to C^\infty(X,F)$.
The following definition is inspired by \cite{Shu}, and  follows the terminology of \cite{E}.
\begin{defn}   An operator $A\colon C^\infty_c(X,F)\to C^\infty( X,F)$   with Schwartz kernel $K_A$  has  {\bf finite propagation} if it is $C$-local (see Definition \ref{defn:Clocal}) for some positive $C$, i.e. if there is some  $C> 0$ such that $K_A(x,y)=0$ 
$\forall x, y$ with $\vert x-y\vert>C$ or equivalently, if  
$$\forall  u\in C^\infty_c(U,\C^k); \quad \supp (Au) \subset \{x: d(x, \supp u) \leq C\}.$$
Note that  requiring finite propagation is more constraining than the assumption of  quasi-locality of \cite{Roe2}.  
\end{defn}
Let $\Upsi_{\rm fp}^{-\infty}(X,F)$ denote the subspace of $\Upsi^{-\infty}(X,F)$ consisting of finite propagation operators with uniformly bounded smooth kernels. We have \cite{Shu2}  
\begin{equation}\label{eq:Shu} \Upsi_{{\rm fp}}^{-\infty}(X,F)\subsetneq {\mathcal S}\Psi^{-\infty}(X,F)\subseteq \Upsi^{-\infty}(X,F)\subseteq \Psi^{-\infty}(X,F).\end{equation}

 \begin{rk} The class $ \Upsi_{{\rm fp}}^{-\infty}(X,F)$ is strictly contained in ${\mathcal S}\Psi^{-\infty}(X,F)$ for the  heat operator $e^{-tD^2}$ on a bounded geometry manifold,   belongs to the class ${\mathcal S}\Psi^{-\infty}(X,F)$ (see for example \cite[\textsection 3.2]{Va})  but it does not have finite propagation.
\end{rk}

\begin{rk}\label{rk:nocomp}
 
 Whereas ${\mathcal S}\Psi^{-\infty}(X,F)$ is an algebra, the class $\Psi^{-\infty}(X,F)$ is not.   
 Indeed, the composition of two operators  with smooth kernels is defined only under appropriate decay conditions at infinity and  when this is the case, the Schwartz kernel of the composition might not be smooth. 
\end{rk}

 \subsubsection{ 
Coverings and classes of $\Gamma$-invariant operators with smooth kernel}

 \label{sec:smoothG}
 Let us now specialise to covering manifolds.
 Let $M$ be a (connected) closed manifold and $\wt M$ a regular covering  given by a $\Gamma$-principal bundle $\pi:\wt M\to M$ with $\Gamma = {\rm Aut}(p)$ the  discrete Lie group of deck transformations (smooth diffeomorphisms $\phi: \wt  M\to \wt M$ such that   $\pi\circ f= \pi$). 
Let  $\wt M$ be the universal cover so that $\Gamma=\pi_1(M)$ is the fundamental group of $M$.
 \begin{ex}$\R^n$ is a universal cover of $\T^n$ with group $\Gamma=\pi_1(\T^n)=\Z^n$.
 \end{ex}
If $M$ is a Riemannian manifold, we endow the covering $\wt M$ with the Riemannian structure induced by $\pi$. Let $E\to M$ be a Hermitian vector bundle and $\wt E:=\pi^* E\to \wt M$   its pullback.  
Both $\wt M$ and $\wt E$  are of bounded geometry \cite{Shu}. 

The action of $\Gamma$ on $\wt M$ via   diffeomorphisms $L_\gamma$
\begin{eqnarray*}
 \Gamma\times\wt M&\longrightarrow & \wt M\\
(\gamma, x)&\longmapsto &L_\gamma( x)
\end{eqnarray*}
 induces an action on  linear operators $A:C^\infty_c(\wt M,\wt E)\to C^\infty(\wt M,\wt E)$:
 $$L_\gamma^\sharp A:= L_\gamma \circ  A\circ L_\gamma^{-1}.$$
  The operator $A$  is said to be {\bf  $\Gamma$-invariant} whenever \begin{equation}
  \label{invariance}  
   L_\gamma ^\sharp A =  A \quad\forall \gamma \in \Gamma. 
 \end{equation}  
 The action $L_\gamma^\sharp$ stabilises $\Psi^{-\infty} (\wt M,\wt E)$. 
Imposing a $\Gamma$-invariance condition leads to the following subclass of operators.
\begin{defn}
\label{def:gammaclasses}
Let  $\Psi_\Gamma^{-\infty}(\wt M, \wt E)$, resp. $\Upsi_\Gamma^{-\infty}(\wt M, \wt E)$, resp. $\Upsi_{{\rm fp},\Gamma}^{-\infty}(\wt M, \wt E)$, resp. ${\mathcal S}\Psi_\Gamma^{-\infty}(\wt M, \wt E)$  denote the space of  $\Gamma$-invariant operators in $\Psi^{-\infty}(\tilde M, \tilde E)$, resp. $\Upsi^{-\infty}(\wt M, \wt E)$,  $\Upsi_{\rm fp}^{-\infty}(\wt M, \wt E)$, ${\mathcal S}\Psi^{-\infty}(X,F)$. 
\end{defn}
The following inclusions follow from (\ref{eq:Shu})
\begin{equation}\label{eq:ShuGamma} \Upsi_{{\rm fp}, \Gamma}^{-\infty} (\wt M, \wt E)\subsetneq {\mathcal S}\Psi_\Gamma^{-\infty}(\wt M, \wt E)\subseteq \Upsi_\Gamma^{-\infty}(\wt M, \wt E)\subseteq \Psi_\Gamma^{-\infty}(\wt M, \wt E).\end{equation}

\subsection{Classes of pseudodifferential operators}
\label{sec:PDOs}
In this section we discuss different classes of pseudodifferential operators on an open manifold  of bounded geometry (we also consider very general classes,  which possibly do not form algebras). 
 This short  review  which brings together and compares different approaches, follows \cite{Shu, MS, Kor, Roe2, E}.

 \subsubsection{Classical pseudodifferential operators on manifolds with bounded geometry}
 \label{sec:clpsibd}

 Let $X$ be an $n$-dimensional manifold and $F\to X$ a vector bundle over $X$ of rank $k$. We assume $X$ and $F$ are both of bounded geometry. 
 
\begin{defnlem} 
\label{defn:modulosmoothkernels}
 The following relations 
 \begin{itemize}
 \item
$A\sim B\Longleftrightarrow A-B \;\text{ has a smooth kernel}$
\item
$A\underset{ \rm \small diag}{\sim} B\Longleftrightarrow A-B \quad\text{has a smooth kernel supported outside the diagonal} $
 \end{itemize}
define   equivalence relations on the space ${\mathcal L}\left(C^\infty_c(X, F),C^\infty(X,F)\right)$ of linear operators acting on the space $C^\infty_c(X, F)$ of compactly supported sections of $F$ with values in the space $C^\infty(X,F)$ of smooth sections of $F$.  We write
  $[ A]$ (resp. $[ A]_{\rm diag}$) for the equivalence class of $A$ with respect to $\sim$ (resp. $\underset{ \rm \small diag}{\sim}$). 
\end{defnlem}

\begin{rk}\label{rk:simDelta}
\begin{itemize}
\item Clearly, we  have $A\underset{ \rm \small diag}{\sim} B\Longrightarrow A\sim B$.
\item Whereas the  equivalence relation $\sim$ is stable under composition of operators (whenever composable), the equivalence relation $\underset{ \rm \small diag}{\sim}$ is not. Indeed, if $A-A_1 =:R$ and $B-B_1=: S$ have smooth kernels supported outside the diagonal, then $AB= A_1S+ RB_1+ RS$ has a smooth kernel  but it might not be supported outside the diagonal since the supports of $A_1$ and $B_1$ intersect the diagonal.
\end{itemize}
\end{rk}

\begin{ex} 
 Given a linear  operator  $A:C^\infty_c(X, F)\to C^\infty(X,F),$ any  localisation  $\chi_1\, A\chi_2$    induced by two smooth functions $\chi_1$, $\chi_2$ whose compact supports  have a non void intersection in a trivialising set,  is properly supported.
 \end{ex}
 
 We shall make use of the  the existence of a \lq\lq good" partition of unity, \cite[Lemmas 1.3,  3.22]{Shu} and \cite[A1.1]{Shu} built as follows. For small enough $\rho$ (smaller than a third of the injectivity radius), there is a countable covering of $X$ by balls $B(x_i,\rho)$ centered at $x_i\in X$ with radius $\rho$ such that $d(x_i,x_j)\geq \rho$ for $i\neq j$ and any point $x\in X$ lies in at most $C_x$ such balls for some constant $C_x$. Moreover, there is a partition of   unity
$1=\sum_i\chi_i$ with smooth functions $\chi_i$ whose supports  $\supp \chi_i$ lie  in $  B(x_i,2\rho)$ and  which together with their derivatives taken in normal coordinates, are bounded independently of $i$. 

\begin{lem}\label{lem:A0S} Given a  linear operator $A:C^\infty_c(X, F)\to C^\infty(X,F)$  there is a properly supported operator $A_0:C^\infty_c(X, F)\to C_c^\infty(X,F)$ of finite propagation such that \begin{equation}\label{eq:PDOBG}A\underset{ \rm \small diag}{\sim} A_0.
\end{equation}  
For any $\ve>0$, the operator $A_0$ can be chosen $\ve$-local.
\end{lem}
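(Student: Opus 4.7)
The plan is to obtain $A_0$ by truncating the Schwartz kernel $K_A$ to an arbitrarily small neighborhood of the diagonal via a smooth cutoff $\psi$ built from the good partition of unity that bounded geometry provides. The only non-bookkeeping ingredient is the implicit hypothesis that $A$ is pseudolocal (as is the case for the classical pseudodifferential operators the ambient section is concerned with); this is what makes $(1-\psi)K_A$ a smooth section rather than just a distribution supported off the diagonal.

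Given $\varepsilon>0$, I would fix $\rho>0$ smaller than both one third of the injectivity radius and $\varepsilon/5$. Applying the good partition of unity $1=\sum_i\chi_i$ with $\mathrm{supp}(\chi_i)\subset B(x_i,2\rho)$, uniformly bounded in all derivatives, I would construct auxiliary cutoffs $\widetilde{\chi}_i$ with $\widetilde{\chi}_i\equiv 1$ on $B(x_i,2\rho)$ and $\mathrm{supp}(\widetilde{\chi}_i)\subset B(x_i,3\rho)$ (e.g.\ as mollified characteristic functions in normal coordinates), again uniformly bounded. I would then set
\[
\psi(x,y):=\sum_i\chi_i(x)\,\widetilde{\chi}_i(y),
\]
a smooth function on $X\times X$ by the local finiteness and the uniform derivative bounds, with $\mathrm{supp}(\psi)\subset\{d(x,y)<5\rho\}\subset\{d(x,y)<\varepsilon\}$. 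A short triangle-inequality check yields $\psi\equiv 1$ on the $\rho$-neighborhood of the diagonal: if $d(x,y)<\rho$, then whenever $\chi_i(x)\neq 0$ one has $y\in B(x_i,3\rho)$, hence $\widetilde{\chi}_i(y)=1$, and summing gives $\psi(x,y)=\sum_i\chi_i(x)=1$.

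I would then define $A_0$ via the Schwartz kernel $K_{A_0}:=\psi K_A$, which is well-defined as a distribution since $\psi$ is smooth. The support constraint on $\psi$ forces $A_0$ to be $\varepsilon$-local, and for $u\in C^\infty_c(X,F)$ the section $A_0u$ is supported in the $\varepsilon$-neighborhood of $\mathrm{supp}(u)$, which is relatively compact. The difference $K_A-K_{A_0}=(1-\psi)K_A$ vanishes on the $\rho$-neighborhood of the diagonal, and by pseudolocality of $A$ the kernel $K_A$ is smooth off the diagonal, so $(1-\psi)K_A$ is smooth throughout $X\times X$ and supported outside the diagonal; this yields $A\underset{\mathrm{diag}}{\sim}A_0$. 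Writing $A_0u=Au-(A-A_0)u$ as a sum of smooth sections then shows $A_0u\in C^\infty_c(X,F)$. The main (modest) obstacle is engineering $\psi$ to equal $1$ on a genuine open neighborhood of the diagonal, which is precisely why one must separate $\chi_i$ from the slightly fatter $\widetilde{\chi}_i$; everything else is a routine consequence of the bounded-geometry toolkit and the pseudolocality of the operators at hand.
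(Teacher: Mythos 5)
Your construction is essentially the paper's: both truncate the Schwartz kernel $K_A$ near the diagonal using the good partition of unity that bounded geometry provides. The paper writes $A = \sum_{i,j}\chi_i A \chi_j$ and keeps only the terms with $\supp\chi_i\cap\supp\chi_j\neq\emptyset$; the implicit kernel cutoff $\Psi(x,y) = \sum_{\supp\chi_i\cap\supp\chi_j\neq\emptyset}\chi_i(x)\chi_j(y)$ then equals $1$ near the diagonal automatically, since $1-\Psi$ is a locally finite sum of terms each supported away from it. Your single-indexed $\psi(x,y) = \sum_i\chi_i(x)\widetilde\chi_i(y)$ does the same job at the price of introducing auxiliary cutoffs $\widetilde\chi_i$. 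You are also right to flag the implicit pseudolocality hypothesis; the paper's proof uses it silently as well (it is what upgrades $A - A_0$ from a distribution supported off the diagonal to a \emph{smooth} kernel, as the relation $\underset{\rm diag}{\sim}$ requires).

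One slip to fix in your triangle-inequality step: with $\widetilde\chi_i \equiv 1$ only on $B(x_i,2\rho)$ and $\supp\widetilde\chi_i\subset B(x_i,3\rho)$, the estimate $\chi_i(x)\neq 0$ and $d(x,y)<\rho$ lands $y$ in $B(x_i,3\rho)$, where $\widetilde\chi_i$ is supported but need \emph{not} equal $1$, so $\psi\equiv 1$ on the $\rho$-neighborhood of the diagonal does not follow. Take instead $\widetilde\chi_i \equiv 1$ on $B(x_i,3\rho)$ with $\supp\widetilde\chi_i\subset B(x_i,4\rho)$ and $\rho<\ve/6$; then $d(x_i,y)\leq d(x_i,x)+d(x,y)<3\rho$ does give $\widetilde\chi_i(y)=1$, hence $\psi\equiv 1$ on the $\rho$-neighborhood, while $\supp\psi\subset\{d(x,y)<6\rho\}\subset\{d(x,y)<\ve\}$.
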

\begin{proof}
Given a "good" finite  open cover  $\mathcal U=(U_i)_{i\in I}$  of $X$ and a   "good"  partition of unity $(\chi_i)_{i\in I}$  subordinated to $\mathcal U$, we  write the operator $A$ as
\begin{equation}
\label{Aij}
A=\sum_{i, j}\chi_iA\chi_j=\underbrace{\sum_{\supp \chi_i\cap\supp\chi_j\neq \emptyset}\chi_i A\chi_j}_{=:\sum_{\{i,j\}\in \mathcal P} A_{ij}}+\sum_{\supp \chi_i\cap\supp\chi_j= \emptyset}\chi_iA\chi_j
\end{equation}
where $\mathcal P$ is the set of  pairs $\{i,j\}$ satisfying $\supp \chi_i\cap\supp\chi_j\neq \emptyset$, and $A_{ij}:=\chi_i A\chi_j$. Then, $A_0:= \sum_{\{i,j\}\in \mathcal P} A_{ij}$ is a  properly supported operator of finite propagation since each $\chi_i A\chi_j$ is supported in balls with uniformly bounded radii. Moreover, by construction $S(A):=\sum_{\{i,j\}\in \complement\mathcal P}\chi_iA\chi_j$  has  Schwartz kernel supported outside the diagonal. \\
For $  \ve>0$, we can choose the diameter of the partition such that $\forall i\in I$,  $\diam U_i<\frac{\ve}{2}$, in which case $A_0$ is an $\ve$-local operator.  
 \end{proof} 
 \begin{defn}Given any real (resp. complex) number $m$, a linear operator $A\colon C^\infty_c(X, F)\to C^\infty(X,F)$ is   a  (resp. classical)  {\bf pseudodifferential operator} of order $m$  if  there is a properly supported operator $A_0\colon C^\infty_c(X, F)\to C_c^\infty(X,F)$ of finite propagation --- for any $\ve>0$, the operator $A_0$ can be chosen $\ve$-local--- with $A\underset{ \rm \small diag}{\sim} A_0$ as in (\ref{eq:PDOBG}) and  
 such that 
\begin{itemize} 
\item  the operator $S(A):= A-A_0$ lies in $ \Psi^{-\infty}(X,F)$,
\item the operator  $A_0$  is a sum $A_0=\sum_\alpha {\rm Op}(\sigma_\alpha)$ ( When applying the operator to a compactly supported section, the sum becomes finite, due to the local finiteness of a "good" open cover.) of   (classical) properly supported pseudodifferential operators ${\rm Op}(\sigma_\alpha)$ of order $m$ supported on  "good" open subsets of $X$ and identified via the   trivialising charts with pseudodifferential operators on open subsets of $\R^n$. The symbol  $\sigma_\alpha$ is interpreted as the symbol $\sigma(A)$  of $A$ seen in  the trivialising chart indexed by $\alpha$.
\end{itemize}
By abuse of notation we shall set \begin{equation}\label{eq:abusenotation}{\rm Op}(\sigma(A)):=A_0=\sum_\alpha {\rm Op}(\sigma_\alpha),\end{equation} so that $A\underset{ \rm \small diag}{\sim} {\rm Op}(\sigma(A))$.
 Let  $\Psi^m(X, F)$ (resp.  $\Psi_{\rm cl}^m(X, F)$) denote the class of such operators.
 \end{defn} 
\begin{rk}
\begin{itemize}
\item  
  Neither the class $\Psi(X,F):=\cup_{m\in \R} \Psi^m (X,F)$   nor  \\$\Psi_{\rm cl}(X,F):=\cup_{m\in \C} \Psi^m_{\rm cl }(X,F)$  form an algebra since two such operators do not generally compose,   compare Remark  \ref{rk:nocomp}.
\item 
The equivalence relations $\sim$ and $\underset{ \rm \small diag}{\sim}$ induce equivalence relations on   $\Psi_{\rm cl}^m(X,F)$ for any $m\in \C$, which preserve the symbol in any trivialising chart. 
\end{itemize}
\end{rk}

 \subsubsection{Uniform classical pseudodifferential operators}
 \label{subsec:ups}
We now specialise to the smaller class of     {\bf uniform} pseudodifferential operators, introduced by Shubin and Meladze  on Lie groups in \cite{MS} and by Kordyukov \cite{Kor} in the general setting of a bounded geometry manifold (see for instance \cite[Section 3]{Shu2}).  As we shall see later, it is  an appropriate class to host  pseudodifferential operators on coverings and consists of the usual H\"ormander properly supported pseudo-differential operators  with additional  uniformity conditions.
\begin{defn} 
\label{def:upsi}
Given (resp. $m\in \C$) $m\in \R$, let $ \Upsi^{m}(X, F)  $  (resp. $ \Upsi_{\rm cl}^{m}(X, F)  $) be the class of all     {\bf uniform   (resp. classical) pseudodifferential operators of order $m$}   i.e.,  operators 
$A\in \Psi^m(X,F)$  (resp. $A\in \Psi_{\rm cl}(X,F)$) 
which in a  "good" trivialising covering
$X=\cup_i B(x_i, \rho)$ of $X$ read $A\underset{ \rm \small diag}{\sim} A_0 $  as in  (\ref{eq:PDOBG}), where 
\begin{itemize}
\item  the operator $S(A):=A-A_0$ lies in $ \Upsi^{-\infty}(X,F)$
\item  and $A$ has a  {\bf   uniformly} bounded  symbol $\sigma(A)=\sigma(A_0)$   i.e., for any multiindices $\alpha, \beta$  there  is a constant $C_{\alpha,\beta}$  independent of $i$ such that 
\begin{equation}
\label{eq:uni-pdo}\Vert \partial_x^\alpha \partial_\xi^\beta \sigma(A) (x,\xi)\Vert\leq C_{\alpha,\beta}\, (1+\vert \xi\vert)^{m-\vert \beta\vert}\quad \forall (x,\xi)\in T^*B(x_i,\rho), 
\end{equation}
\item $\left(\right.$resp. for classical operators, with an additional {\bf  uniform} bound on the remainder terms in (\ref{eq:classical}), namely
 for any multiindices $\alpha, \beta$, for any $N\in \N$,  and for any excision function $\chi$ around zero, there  is a constant $C_{\alpha,\beta, N}$ independent of $i$ such that 
\begin{equation}\label{eq:uni-clpdo}\left. \left\Vert \partial_x^\alpha \partial_\xi^\beta \left(\sigma(A) (x,\xi)-\sum_{j=0}^{N-1} \sigma_{m-j}(A)\chi\right) (x,\xi)\right\Vert\leq C_{\alpha,\beta,N}\, (1+\vert \xi\vert)^{m-\vert \beta\vert-N}\quad \forall (x,\xi)\in T^* B(x_i,\rho) \right).\end{equation}
\end{itemize} 
On the grounds of (\ref{eq:Vaillant}) we can furthermore require that  $S(A)$ lies in ${\mathcal S}\Psi^{-\infty}(X,F)  $ which  defines  the following subclasses of operators:
\begin{equation}\label{eq:Vaillant1} {\mathcal S}\Upsi^m(X,F)\subseteq \Upsi^m(X,F)\quad \forall m\in \R, \quad {\mathcal S}\Upsi_{{\rm cl}}^m(X,F)\subseteq \Upsi_{\rm cl}^m(X,F)\quad \forall m\in \C . \end{equation}

\end{defn}  
\begin{rk}As can be seen from \eqref{eq:A1estimate}, for a vector bundle $E\to M$ on  a closed  manifold $M$, (\ref{eq:uni-pdo}) is verified by any pseudodifferential operator so that 
 $ \Upsi^{m}(M, E)  = \Psi^{m}(M, E)  $. Similarly, (\ref{eq:uni-clpdo}) is satisfied by any classical pseudodifferential operator and we have $ \Upsi_{\rm cl}^{m}(M, E)  = \Psi_{\rm cl}^{m}(M, E)  $.
\end{rk}
  

Similarly to pseudodifferential operators on closed manifolds, uniform  pseudodifferential operators modify the degree of regularity of   Sobolev  spaces by the order of the operator \cite[Remark (c) after Def. 3.3]{Shu}.
\begin{lem}\label{lem:ShuThm1} {} \cite[Proposition 2.20]{E}  An operator $A\in \Upsi^m(X,F)$  extends to a bounded operator 
\begin{equation}\label{eq:regm}
\overline A: H^s(X,F)\to H^{s-m}(X,F) \;, \text{ for any  } s\in \R.
\end{equation}
\end{lem}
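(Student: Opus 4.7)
The plan is to reduce the global bound on the manifold of bounded geometry to the standard $\R^n$-boundedness of pseudodifferential operators via the good partition of unity, taking advantage of the uniform symbol estimates \eqref{eq:uni-pdo} and the uniform control on the residual smoothing term.

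First I would use Lemma \ref{lem:A0S} (or rather the decomposition (\ref{eq:PDOBG})) to write $A \underset{\rm diag}{\sim} A_0$ with $A_0$ properly supported, $\eps$-local of finite propagation, and $S(A) := A - A_0 \in \Upsi^{-\infty}(X,F)$. The remainder $S(A)$ has a smooth kernel with uniformly bounded derivatives in the sense of Definition \ref{def:Uinfty}; a Schur-type estimate together with the bounded-geometry volume growth bounds then shows $S(A)$ is bounded $H^s \to H^t$ for every pair $s,t \in \R$, so it suffices to treat $A_0$.

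Next, fix a good covering $X = \cup_i B(x_i,\rho)$ with subordinate good partition of unity $1 = \sum_i \chi_i$ as in the paragraph preceding Lemma \ref{lem:A0S}. By the choice of $\eps < \rho$ and the $\eps$-locality of $A_0$, the double sum $A_0 = \sum_{i,j} \chi_i A_0 \chi_j$ has only finitely many nonzero terms $\chi_i A_0 \chi_j$ per index $i$, with a bound on this number independent of $i$ coming from the bounded geometry. In each trivializing chart the operator $\chi_i A_0 \chi_j$ identifies with a compactly supported pseudodifferential operator on $\R^n$ whose symbol satisfies Hörmander estimates of order $m$ with constants $C_{\alpha,\beta}$ independent of $i$ (this is exactly (\ref{eq:uni-pdo})). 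I would then invoke the standard boundedness theorem on $\R^n$, which gives $\chi_i A_0 \chi_j : H^s(\R^n) \to H^{s-m}(\R^n)$ with operator norm controlled by a finite number of the $C_{\alpha,\beta}$, hence bounded by a constant independent of $(i,j)$.

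Finally I would assemble the local bounds into a global estimate. The Sobolev spaces $H^s(X,F)$ are defined through the same good partition of unity (\cite[Definition 3.23]{Sch}), so $\|u\|_{H^s}^2$ is equivalent to $\sum_i \|\chi_i u\|_{H^s(\R^n)}^2$ after transport to the trivialising charts. Using the finite intersection property of the cover (each point lies in at most $C_x$ balls, with $C_x$ bounded), a Cauchy--Schwarz/almost-orthogonality argument then yields
\[
\|A_0 u\|_{H^{s-m}(X,F)}^2 \;\leq\; C \sum_{i,j} \|\chi_i A_0 \chi_j u\|_{H^{s-m}(\R^n)}^2 \;\leq\; C' \sum_j \|\chi_j u\|_{H^s(\R^n)}^2 \;\leq\; C'' \|u\|_{H^s(X,F)}^2,
\]
which is the required bound. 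The main obstacle is bookkeeping: making sure that the local pseudodifferential norms, the bounded multiplicity of the cover, and the equivalence of the partition-of-unity definition of $H^s$ combine with uniform (not merely finite) constants; this is precisely where Definition \ref{def:upsi} and the bounded geometry of $(X,F)$ are indispensable.
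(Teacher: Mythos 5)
Your handling of the properly supported piece $A_0$ is the standard localisation argument: cut with the good partition of unity, invoke the uniform H\"ormander estimates \eqref{eq:uni-pdo} chart by chart, apply the $\R^n$-boundedness theorem with constants depending only on finitely many of the $C_{\alpha,\beta}$, and reassemble using the finite multiplicity of the cover and the partition-of-unity characterisation of $H^s(X,F)$. This is exactly the route the paper takes by citing \cite[Proposition~2.20]{E}, so that part of your proposal is sound and matches the intended proof.

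The gap is in the sentence handling $S(A)$. You assert that because $S(A)\in\Upsi^{-\infty}(X,F)$ has a smooth kernel with uniformly bounded derivatives, ``a Schur-type estimate together with the bounded-geometry volume growth bounds'' yields boundedness $H^s\to H^t$. A Schur test requires control of $\sup_x\int_X\|K_{S(A)}(x,y)\|\,dy$ (and the transpose), which in turn needs the kernel to \emph{decay} in $d(x,y)$; uniform boundedness of $K_{S(A)}$ and its derivatives, which is all that Definition~\ref{def:Uinfty} provides, gives no such integrability on a non-compact manifold. (Take $X=\R^n$ and $K_{S(A)}\equiv 1$ outside a collar of the diagonal: it satisfies Definition~\ref{def:Uinfty}, yet the associated operator does not even act boundedly on $L^2$.) The decay you need is precisely the quasi-locality hypothesis that appears in Engel's statement of Proposition~2.20; the paper's surrounding remark claims quasi-locality can be dropped, but the class $\Upsi^{-\infty}(X,F)$ alone does not support the Schur argument, and your proof would in fact only go through for the subclass ${\mathcal S}\Upsi^m(X,F)$ of \eqref{eq:Vaillant1}, or under an explicit off-diagonal decay (quasi-locality) assumption on $S(A)$. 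You should therefore either strengthen the hypothesis on the smoothing tail or replace the Schur step by the argument actually used for ${\mathcal S}\Psi^{-\infty}$, namely working directly with the mapping-property definition \eqref{def:Ssmooth}.
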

\begin{rk}\begin{itemize}
\item  The proof of \cite[Proposition 2.20]{E}, stated for quasi-local uniform pseudodifferential operators, relies on the local finiteness of the covering and does not use   quasi-locality. Hence the proof extends to elements of $\Upsi^m(X,F)$.
\item Consequently, the space of uniform  pseudodifferential operators of  real order $m$ compares with the space $\mathcal O p ^m(X, F)$ used in \cite[pag. 11]{Va} to denote the space of all \lq\lq $m$-regularising operators\rq\rq, i.e. the linear operators $A\colon C_c^\infty(X, F)\to C^\infty_c(X, F)'$ which extend as in \eqref{eq:regm}
$$
{\mathcal S}\Upsi^m(X, F)\subsetneq \Upsi^m(X, F)\subsetneq  \mathcal O p ^m(X, F)\ .
$$
\end{itemize}
\end{rk}
This leads to the following identifications.
\begin{prop}\label{prop:Comparison} (compare with \cite[Lemma 2.22]{E})
$${\mathcal S}\Psi^{-\infty}(X,F)=  \cap_{m\in \R} \Upsi^m(X,F)=\Upsi^{-\infty}(X,F).$$
Consequently, ${\mathcal S}\Upsi^m(X,F)= \Upsi^m(X,F)$ for any real number $m$  and 
 ${\mathcal S}\Upsi_{\rm cl}^m(X,F)= \Upsi_{\rm cl}^m(X,F)$ for any complex number $m$.
\end{prop}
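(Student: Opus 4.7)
The plan is to establish the chain of inclusions
$$
{\mathcal S}\Psi^{-\infty}(X,F)\;\subseteq\;\Upsi^{-\infty}(X,F)\;\subseteq\;\bigcap_{m\in\R}\Upsi^{m}(X,F)\;\subseteq\;{\mathcal S}\Psi^{-\infty}(X,F),
$$
and then read off the consequences for $\Upsi_{\rm cl}^{m}$ from the definition of ${\mathcal S}\Upsi^{m}$.

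First, the inclusion ${\mathcal S}\Psi^{-\infty}(X,F)\subseteq \Upsi^{-\infty}(X,F)$ is already available as \eqref{eq:Roe}.

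Next, to handle $\Upsi^{-\infty}(X,F)\subseteq \bigcap_{m}\Upsi^{m}(X,F)$, let $A\in \Upsi^{-\infty}(X,F)$, so that $K_A$ is smooth with $\|\partial_x^\alpha\partial_y^\beta K_A\|\leq C_{\alpha,\beta}$ uniformly on $X\times X$. Given a good trivialising covering $X=\bigcup_i B(x_i,\rho)$ with associated partition of unity $(\chi_i)$, I would argue as in the proof of Lemma~\ref{lem:A0S}: write $A=A_0+S(A)$, where $A_0$ collects the terms $\chi_iA\chi_j$ with $\supp\chi_i\cap\supp\chi_j\neq\emptyset$ and $S(A)$ has smooth kernel supported outside the diagonal. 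Uniform boundedness of $K_A$ and of the derivatives of the $\chi_i$ forces $S(A)\in\Upsi^{-\infty}(X,F)$. In each local chart the kernel of $A_0$ is supported in a ball of bounded radius and is smooth with uniformly bounded derivatives, so its local symbol
$$
\sigma(A_0)(x,\xi)=\int K_{A_0}(x,x+y)\,e^{-i\langle y,\xi\rangle}\,dy
$$
is Schwartz in $\xi$, uniformly in $x$ and in the chart index. Hence the estimates \eqref{eq:uni-pdo} (and, for classical operators, \eqref{eq:uni-clpdo}) are satisfied for every $m\in\R$, so $A\in\Upsi^{m}(X,F)$ for all $m$.

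The main content is the third inclusion $\bigcap_{m\in\R}\Upsi^{m}(X,F)\subseteq {\mathcal S}\Psi^{-\infty}(X,F)$. Here I would invoke Lemma~\ref{lem:ShuThm1}: any $A\in\Upsi^{m}(X,F)$ extends to a bounded operator $\overline{A}\colon H^{s}(X,F)\to H^{s-m}(X,F)$ for every $s\in\R$. If $A\in\bigcap_{m}\Upsi^{m}(X,F)$, then for arbitrary $s,t\in\R$ the choice $m=s-t$ gives a bounded extension $H^{s}\to H^{t}$, so
$$
A\in\bigcap_{(s,t)\in\R^{2}}\mathcal{L}\bigl(H^{s}(X,F),H^{t}(X,F)\bigr)={\mathcal S}\Psi^{-\infty}(X,F),
$$
by the definition \eqref{def:Ssmooth}. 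Closing the loop yields the three equalities.

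Finally, the two \emph{consequently} statements follow at once: by Definition~\ref{def:upsi}, ${\mathcal S}\Upsi^{m}(X,F)$ consists of those $A\in\Upsi^{m}(X,F)$ whose smoothing remainder $S(A)$ lies in ${\mathcal S}\Psi^{-\infty}(X,F)$ rather than just in $\Upsi^{-\infty}(X,F)$. Since the two smoothing classes coincide by the first part, so do ${\mathcal S}\Upsi^{m}(X,F)$ and $\Upsi^{m}(X,F)$ for every $m\in\R$, and similarly ${\mathcal S}\Upsi_{\rm cl}^{m}(X,F)=\Upsi_{\rm cl}^{m}(X,F)$ for every $m\in\C$. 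The main obstacle is step~(ii): one has to be careful that the uniform bounds on $K_A$ combine with the uniform estimates on the good partition to produce genuinely uniform symbol estimates of every order across the overlapping charts; everything else is essentially formal.
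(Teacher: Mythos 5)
Your proposal is correct and follows essentially the same route as the paper: the inclusion $\cap_{m}\Upsi^{m}\subseteq {\mathcal S}\Psi^{-\infty}$ via the Sobolev boundedness of Lemma~\ref{lem:ShuThm1}, and ${\mathcal S}\Psi^{-\infty}\subseteq\Upsi^{-\infty}$ from \eqref{eq:Roe}. The only difference is that you flesh out the inclusion $\Upsi^{-\infty}\subseteq\Upsi^{m}$ (via the $A_0+S(A)$ decomposition and the observation that the near-diagonal smooth kernel has Schwartz symbol) where the paper simply calls this inclusion ``straightforward,'' which is a reasonable bit of extra care rather than a different approach.
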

\begin{proof}For any real number $m$, on the one hand we have
$\cap_{m\in \R}\Upsi^m(X,F)\subset {\mathcal S}\Psi^{-\infty}(X,F)$ as a consequence of \eqref{eq:regm}. On the other hand,   we know by (\ref{eq:Roe}) that ${\mathcal S}\Psi^{-\infty}(X,F)\subset \Upsi^{-\infty}(X,F)\subset\Upsi^m(X,F)$ for any real number $m$. Hence ${\mathcal S}\Psi^{-\infty}(X,F)\subset  \cap_{m\in \R} \Upsi^m(X,F)$ and the first identity follows.\\
As for the second identity, we have the straightforward inclusion $\Upsi^{-\infty}(X,F)\subset \Upsi^m(X,F)$ for any real number $m$  which yields the inclusion from right to left. The inclusion from left to right follows from observing that the uniform estimates \eqref{eq:uni-pdo} imply the uniform boundedness of the derivatives of the Schwartz kernel of the operator.
\end{proof}

\subsection{$\Gamma$-invariant classical pseudodifferential operators on covering spaces}
\label{sec:cover}

Let $M$ be a (connected) closed manifold and $\pi\colon \wt M\to M$ a regular $\Gamma$-covering as in Section \ref{sec:smoothG}.

\begin{defn}
\label{def:gammaclasses2}
Imposing the $\Gamma$-invariance condition \eqref{invariance} leads to the following subclasses
 $\Psi_\Gamma^m(\wt M,\wt E)$, $\Upsi_\Gamma^m(\wt M,\wt E)$, $ \Psi_\Gamma^{-\infty}(\wt M,\wt E)$, $\Upsi_\Gamma^{-\infty}(\wt M,\wt E)$  of  $\Gamma$-invariant operators in the corresponding classes 
   defined in Section \ref{sec:clpsibd}. The spaces $\Psi_{  {\rm cl},\Gamma}^m(\wt M,\wt E)$ and $\Upsi_{  {\rm cl}, \Gamma}^m(\wt M,\wt E)$ are defined analogously. 
\end{defn}

\begin{rk} 
\label{rk:Gamma-sup} As consequences of cocompactness of  $\wt M$, 
  a $\Gamma$-invariant operator on $\wt M$ is properly supported if and only if its Schwartz kernel has compact support in $(\wt M\times \wt M)/\Gamma$, see \cite[Chapter 3, above Definition 3]{ShuG}.
 \end{rk}

We   equip   $\wt M$  with a $\Gamma$-invariant locally finite open cover  in the following way:  given a finite open cover $\mathcal U_M=\{U_j, j=1, \cdots, N\}$  of $M$, we lift it to $\wt M$ and  take all the connected components to have a cover by connected open subsets. We obtain a $\Gamma$-invariant locally finite open cover
$
\wt M=\bigcup_{{j=1,..,N \atop \gamma\in \Gamma}} \gamma\, U_j.
$
We then build  a $\Gamma$-invariant partition of unity  \begin{equation}\label{eq:tildechi}\wt\chi_j:=\{\chi_{j, \gamma}\in C^\infty_c(\gamma\, U_j), \gamma\in\Gamma\}_{ j=1,\cdots, N}\end{equation}  subordinated to this cover with $\chi_{j, \gamma}(x)=\chi_{j, e}(\gamma^{-1}x)$. This way,  a partition of unity $\{\chi_j\}_{j=1,\cdots, N}$  of $M$ subordinated to the covering $\mathcal U_M$ is lifted to a $\Gamma$-invariant partition of unity $\{\wt \chi_j\}_{j=1,\cdots, N}$, which is a "good" partition of unity in the sense of manifolds with bounded geometry.
 
Such a partition of unity combined with a subordinated trivialisation of $\wt E$  can  be used to construct   Sobolev spaces $H^s(\wt M,\wt E)$ of sections on $\wt E$ \cite[Definition 1, \textsection 3.9]{Sch}.
As a consequence of the corresponding property on manifolds with bounded geometry, see Lemma \ref{lem:ShuThm1}, we have:
\begin{rk}
\begin{itemize}
\item    An  operator $A\in \Upsi^m_\G(\wt M,\wt E)$  extends to a bounded operator\\ $H^s(\wt M,\wt E)\to H^{s-m}(\wt M,\wt E) $, for any $ s\in \R$. 
\item Consequently, by Proposition \ref{prop:Comparison}, $\Upsi_\G^{-\infty}(\wt M,\wt E)=\cap_{m\in \R}\Upsi^m_\G(\wt M,\wt E)$. 
\end{itemize}
\end{rk}

\subsection{Lifted operators}
As proved in \cite{ShuG}, $\ve$-local pseudodifferential operators can be lifted from $M$ to $\wt M$, and their lifts are uniform properly supported operators. 
We include  the proof of this classical fact for completeness.  
 \begin{lem}
 {}\cite[Proposition 1, \textsection 3.9]{ShuG}
 \label{prop:ep-local}
 With the  notation as above,  let $r_0:=\inf_{x\in X} \{d(x, \gamma x), \gamma\in \Gamma\setminus \{e\}\}>0$, where $e$ is the unit of $\,\G$ and  let  $A\colon C^\infty(  M, E)\rightarrow C^\infty (M,  E)$  be an $\ve$-local operator  with $\ve<\frac{r_0}{2}$. 
\begin{enumerate}
\item 
 There exists a unique $\ve$-local operator $\wt A\colon C_c^\infty (\wt M,\wt E)\to C^\infty (\wt M,\wt E)$ such that for any lifted local section $\wt s $ of $F$ of a local section $s$ of $E$ \begin{equation}\label{tildepistar}\widetilde A\,  \tilde s=\pi^*(As). \end{equation} 
 With the notations of the introduction, we write $\pi_\sharp (\widetilde A)= A; \quad \pi^\sharp ( A)= \widetilde A.$
\item If moreover $A $ lies in $ \Psi^m(M, E) $ for some $m\in \R$ (resp. $\,\Psi_{cl}^m(M, E)$ for some $m\in \C$), we have (with the notation of \eqref{eq:simsymb}, see Appendix A) 
 \begin{equation}
 \label{eq:liftedsymbolm}
 \sigma(\wt A) = \wt {\sigma (A)}, \quad \left({\rm resp.}\quad  \sigma_{m-j} (\wt A) = \reallywidetilde {\sigma_{m-j}(A)},\quad \forall j\geq 0\right). 
 \end{equation}   This is
 to be understood as a local identity in appropriate local trivialisations around a point $ x$.
In particular,  $\wt A$ lies in $ \Upsi_\Gamma^m( \wt M,\wt E)$  (resp. $ \,\Upsi_{{\rm cl}, \Gamma}^m(\wt M,\wt E)$).
\end{enumerate}
\end{lem}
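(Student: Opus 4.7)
My plan is to build $\wt A$ in three stages: define it locally on sections of small support using the local inverse of $\pi$, globalise via the $\Gamma$-invariant partition of unity $\{\wt\chi_{j,\gamma}\}$ of (\ref{eq:tildechi}), and then identify its symbol in lifted coordinate charts. The assumption $\ve<r_0/2$ is crucial throughout: it guarantees that for every $\wt x\in\wt M$ with $x=\pi(\wt x)$, the restriction $\pi\colon B(\wt x,r_0/2)\to B(x,r_0/2)$ is an isometric diffeomorphism, eliminating any wrap-around between distinct sheets.

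For a section $\wt s\in C_c^\infty(\wt M,\wt E)$ supported in a single ball $B(\wt x,\rho)$ with $\rho<\ve$, the push-down $s:=\pi_*\wt s\in C_c^\infty(B(x,\rho),E)$ is well defined, and $\ve$-locality of $A$ gives $\supp(As)\subset B(x,2\ve)\subset B(x,r_0)$. Pulling $As$ back via the local inverse of $\pi$ on $B(\wt x,r_0/2)$ then defines $\wt A\wt s$ on this class of sections, and the defining identity (\ref{tildepistar}) holds by construction; uniqueness on this class follows because (\ref{tildepistar}) combined with $\ve$-locality pins down the values. Globalisation proceeds by choosing the partition of unity so that each $\supp\wt\chi_{j,\gamma}$ has diameter less than $\ve$: every compactly supported $\wt s$ admits a \emph{finite} decomposition $\wt s=\sum_{j,\gamma}\wt\chi_{j,\gamma}\wt s$ with each summand of small-support type, and I set $\wt A\wt s:=\sum_{j,\gamma}\wt A(\wt\chi_{j,\gamma}\wt s)$. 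Independence from the partition is obtained by passing to a common refinement, on which the two local definitions agree by (\ref{tildepistar}). The $\Gamma$-invariance of $\wt A$ is automatic, since $\pi_*\circ L_\gamma^*=\pi_*$ and the local pullback intertwines with the $\Gamma$-action; $\ve$-locality of $\wt A$ is inherited from $A$ because $\pi$ acts as an isometry on all relevant small balls.

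For the second assertion, let $U\subset M$ be a trivialising chart of diameter less than $\ve$ and $U_0\subset\wt M$ a connected component of $\pi^{-1}(U)$. By construction, $\wt A$ acts on sections supported in $U_0$ as the conjugate $(\pi|_{U_0})^*\circ A\circ (\pi|_{U_0})_*$, so in coordinates on $U_0$ pulled back from $U$ the full symbol of $\wt A$ coincides with the pullback of $\sigma(A)$, yielding (\ref{eq:liftedsymbolm}) both in full and, in the classical case, homogeneous component by homogeneous component; this identification then extends to every translate $\gamma U_0$ by $\Gamma$-invariance. The uniform estimates (\ref{eq:uni-pdo}) and, in the classical case, the remainder bounds (\ref{eq:uni-clpdo}) follow from the compactness of a fundamental domain $F$: the finitely many charts covering $F$ give uniform bounds on the symbol and its derivatives, and every other lifted chart is a $\Gamma$-translate of one of these, with identical bounds since $\Gamma$ acts by Riemannian isometries preserving the lifted symbol. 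This places $\wt A$ in $\Upsi_\Gamma^m(\wt M,\wt E)$ (respectively $\Upsi_{{\rm cl},\Gamma}^m(\wt M,\wt E)$).

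The main obstacle I anticipate is checking that the partition-of-unity extension is independent of the chosen decomposition; this requires tracking the $\ve$-locality condition carefully across refinements of the cover to ensure that no contribution ever crosses between distinct sheets of $\pi$, which is precisely what the strict bound $\ve<r_0/2$ is designed to rule out. Once this compatibility is secured, the remaining verifications --- $\Gamma$-equivariance, $\ve$-locality, and the symbolic statement --- reduce to routine transport of local properties of $A$ under the isometric local inverse of $\pi$.
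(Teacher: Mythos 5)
Your construction is sound in spirit, but it takes a genuinely different route from the paper, and contains one quantitative slip worth flagging.

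The paper's proof is shorter and more direct: it defines $\wt A$ through its Schwartz kernel, setting $K_{\wt A}(\wt x,\wt y)=K_A(\pi\wt x,\pi\wt y)$ when $d(\wt x,\wt y)<\ve$ and $0$ otherwise. The only thing to check is that, for $d(\wt x,\wt y)<\ve$, the pair $(\pi\wt x,\pi\wt y)$ is unambiguous, i.e.\ that a lift $\wt y$ of $y$ with $d(\wt x,\wt y)<\ve$ is unique; this follows immediately from $d(\wt y,\gamma\wt y)\ge r_0>2\ve$. With this one observation, $\Gamma$-invariance, $\ve$-locality, linearity and well-definedness of $\wt A$ are manifest from the kernel formula, and there is no partition-of-unity gluing and hence nothing to check about independence of the chosen decomposition. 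Your operator-level approach (localise, pull down, apply $A$, pull back, glue via the $\Gamma$-invariant partition of unity) reaches the same operator, but at the cost of the consistency verifications you correctly identify as the "main obstacle." The symbol identification in part (2) is the same in both treatments: you both compute the symbol in a lifted trivialisation as the conjugate of the symbol of $A$ via the sheet diffeomorphism; your explicit cocompactness argument for the uniform estimates spells out what the paper leaves implicit in the identity $\Upsi^m(M,E)=\Psi^m(M,E)$ for $M$ closed.

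The slip: you require $\rho<\ve$ on the ball containing $\supp\wt s$, obtain $\supp(As)\subset B(x,\rho+\ve)\subset B(x,2\ve)$, and then pull back $As$ via the local inverse of $\pi$ on $B(\wt x,r_0/2)$. But $\pi$ is injective on $B(\wt x,r)$ only when $2r\le r_0$, and that local inverse is a map $B(x,r_0/2)\to B(\wt x,r_0/2)$; if $\ve$ is close to $r_0/2$ (say $\ve>r_0/4$), then $B(x,2\ve)\not\subset B(x,r_0/2)$, and part of $\supp(As)$ lies outside the domain where the local inverse is defined. The correct requirement is $\rho+\ve\le r_0/2$, i.e.\ $\rho<r_0/2-\ve$, which is a nonempty range precisely because $\ve<r_0/2$; the same correction applies to the diameter bound on the partition of unity, which should be $<r_0/2-\ve$ rather than $<\ve$. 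Once you make this adjustment, your construction matches the paper's and both give the same $\wt A$.
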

\begin{proof}
We have that if $d(x,y)<\ve$, then $d(\gamma x, y)>\ve$ for all $ e\neq \gamma\in \Gamma$. If $K_A$ denotes the Schwartz kernel of $A$, define $\wt A$ by constructing the operator with Schwartz kernel 
$$
K_{\tilde A}= \left\{\begin{array}{cc}K_A(\pi(x),\pi(y)), & d(x,y)<\ve\\ 0\;\;\;\;, & \text{elsewhere}.\end{array}\right.
$$
To show (2), let  $(V,\Phi)$ be a local trivialisation of $E$  where $V$ is an evenly covered open set. Recall that the symbol of $A$ on this local chart, denoted by $\sigma_V (A)(x, \xi)$, is by definition  the symbol of the operator $\Phi^\sharp A_V$ acting on  matrix valued functions on $\phi(V)\subset \R^n$, where $A_V$ is the localization of $A$ on $V$.  The symbol of the lifted operator $\wt A$ is described as follows. Let $\pi^{-1}(V)=\bigsqcup_{\gamma\in \Gamma}U_\gamma$;  on each local chart $(U_\gamma, \phi\circ \pi)$ the symbol is  defined as the symbol of $(\Phi\circ \pi)^\sharp \wt A_{U_\gamma}$. It follows that  $
\sigma_{U_\gamma}(\wt A)( x,\xi)=\sigma_V(A)(\pi( x),\xi)$. \\
The fact that  $\wt A$ lies in $ \Upsi_\Gamma^m( \wt M,\wt E)$  (resp. $ \,\Upsi_{{\rm cl}, \Gamma}^m(\wt M,\wt E)$) then follows from  the fact that $ \Upsi^m( M, E)= \Psi_\Gamma^m( \wt M,\wt E)$  (resp. $ \,\Upsi_{{\rm cl}}^m( M, E)$) = $\Psi_{{\rm cl, \Gamma}}^m( M, E)$).
\end{proof}

  We now combine Proposition \ref{prop:ep-local} with  the partition of the unity \eqref{eq:tildechi} to lift operators {\it modulo $\underset{ \rm \small diag}{\sim}$}, and have therefore a \lq\lq lifted" analogue of Proposition \ref{prop:PsidoU}.
\begin{prop}
\label{prop:ShubinTh1}  
Let $ \ve>0$ and  $\mathcal A$  be an operator in $\Psi_\Gamma(\wt M, \wt E)$.
\begin{enumerate}
	\item If $\mathcal A$ is $\ve$-local, with the notation of \eqref{tildepistar}, there exists a unique $A $  such that \begin{equation}\label{eq:tildeAtildes}\mathcal A=\pi^\sharp A.\end{equation}
\item In general, there exists an  $\ve$-local operator $A\in \Psi(M, E)$  
  such that \begin{equation}\label{eq:mathfrakAtildeA}\mathcal A\underset{ \rm \small diag}{\sim} \wt A.\end{equation}
  Consequently, the symbols of the two operators relate by
 \begin{equation}\label{eq:liftedsymbol}\sigma(\mathcal A)\sim \reallywidetilde{\sigma(A)},\end{equation}
  independently of the choice of $\wt A\in [\mathcal A ]_{\rm diag}$.
If $\mathcal A$ is $\ve$-local, in particular if it is a differential operator, then $\mathcal A =\wt {\pi_* \mathcal A}.$ 
\end{enumerate}
\end{prop}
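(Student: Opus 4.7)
\emph{Part (1)} is a pure descent-of-kernel argument. Write $K_{\mathcal A}$ for the Schwartz kernel of $\mathcal A$ on $\wt M\times \wt M$. The $\ve$-locality with $\ve<r_0/2$ means $K_{\mathcal A}$ is supported in $\{(x,y): d(x,y)\leq \ve\}$, while the $\Gamma$-invariance reads $K_{\mathcal A}(\gamma x,\gamma y)=K_{\mathcal A}(x,y)$. The key observation is that whenever $p,q\in M$ satisfy $d(p,q)<\ve$ and we pick any $x\in \pi^{-1}(p)$, the condition $d(x,y)<\ve$ singles out a \emph{unique} lift $y\in \pi^{-1}(q)$: for any other lift $\gamma y$ with $\gamma\ne e$, the triangle inequality together with $d(y,\gamma y)\geq r_0$ forces $d(x,\gamma y)\geq r_0-\ve>\ve$. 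I would then define $K_A(p,q):=K_{\mathcal A}(x,y)$ in this case and $K_A(p,q):=0$ otherwise; the $\Gamma$-invariance of $K_{\mathcal A}$ makes this independent of the choice of $x$, and the resulting $A$ is $\ve$-local on $M$ and satisfies $\pi^\sharp A=\mathcal A$ by construction via \eqref{tildepistar}. Uniqueness is immediate: if $\pi^\sharp A_1=\pi^\sharp A_2$, then $\pi^*((A_1-A_2)s)=0$ locally for every local section $s$, and $\pi^*$ is injective on local sections.

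\emph{Part (2)} is obtained by mimicking Lemma \ref{lem:A0S} $\Gamma$-equivariantly. Fix a $\Gamma$-invariant partition of unity $\{\wt\chi_{j,\gamma}\}$ of the form \eqref{eq:tildechi}, refined so that each $\supp\wt\chi_{j,e}$ has diameter below $\ve/2$. Writing
\[
\mathcal A \;=\; \sum_{(j,\gamma),(k,\delta)} \wt\chi_{j,\gamma}\,\mathcal A\,\wt\chi_{k,\delta} \;=\; \underbrace{\sum_{\supp\wt\chi_{j,\gamma}\cap V_\ve(\supp\wt\chi_{k,\delta})\neq\emptyset}\wt\chi_{j,\gamma}\,\mathcal A\,\wt\chi_{k,\delta}}_{=:\,\mathcal A_0} \;+\; \underbrace{\sum_{\rm far}\wt\chi_{j,\gamma}\,\mathcal A\,\wt\chi_{k,\delta}}_{=:\,\mathcal R},
\]
where $V_\ve(\cdot)$ denotes the $\ve/2$-neighbourhood, $\mathcal A_0$ is $\ve$-local by construction, and each term of $\mathcal R$ has, by pseudolocality of $\mathcal A\in\Psi_\Gamma(\wt M,\wt E)$, a smooth kernel supported off the diagonal. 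Local finiteness of the cover plus $\Gamma$-stability of both index sets (see the third paragraph) guarantee that $\mathcal A_0$ and $\mathcal R$ are themselves $\Gamma$-invariant. Part (1) then applies to $\mathcal A_0$ and produces $A:=\pi_\sharp\mathcal A_0\in\Psi(M,E)$, which is $\ve$-local and satisfies $\wt A=\mathcal A_0$; hence $\mathcal A-\wt A=\mathcal R$, which is exactly \eqref{eq:mathfrakAtildeA}.

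The symbol identity $\sigma(\mathcal A)\sim \reallywidetilde{\sigma(A)}$ is then immediate from \eqref{eq:liftedsymbolm}, which yields $\sigma(\wt A)=\reallywidetilde{\sigma(A)}$, combined with the fact that the equivalence $\underset{\rm diag}{\sim}$ does not affect the symbol (the difference has smooth kernel off the diagonal, hence vanishing full symbol). Independence of the representative $\wt A\in[\mathcal A]_{\rm diag}$ follows by the same observation: any two such lifts have the same symbol. When $\mathcal A$ is already $\ve$-local---for instance when it is a differential operator, which is $0$-local---the "far" sum $\mathcal R$ is empty and part (1) directly gives $\mathcal A=\pi^\sharp(\pi_\sharp\mathcal A)=\wt{\pi_*\mathcal A}$. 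The most delicate point in the whole argument is checking the $\Gamma$-stability of the "close" versus "far" index sets in the partition-of-unity decomposition: this relies crucially on the equivariance relation $\chi_{j,\gamma}(x)=\chi_{j,e}(\gamma^{-1}x)$ together with the fact that $\Gamma$ acts by isometries on $\wt M$, so that $\Gamma$ permutes the supports while preserving all pairwise distances. Once this $\Gamma$-stability is secured, everything else is a routine combination of pseudo-locality with the descent principle established in part (1).
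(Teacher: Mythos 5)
Your proof is correct and follows essentially the same route as the paper: a $\Gamma$-invariant partition of unity splits $\mathcal A$ into an $\ve$-local piece and a remainder whose kernel is smooth and supported off the diagonal by pseudo-locality, after which the $\ve$-local piece descends to $M$ via the kernel construction you spell out in part (1) (the paper leaves that descent implicit, relying on the uniqueness statement of Lemma \ref{prop:ep-local}). One small bookkeeping point: with supports of diameter below $\ve/2$ and the $V_\ve$ ($\ve/2$-neighbourhood) criterion, the resulting $\mathcal A_0$ is only $\tfrac{3\ve}{2}$-local, so you would need slightly smaller supports, or the paper's literal support-intersection criterion, to land exactly at $\ve$-locality.
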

\begin{proof} 
 Let $\{\wt \chi_j\}_{ j=1, \cdots, N}$ be a $\Gamma$-invariant partition of unity subordinated to a  cover  $X=\bigcup_{{j=1,..,N \atop \gamma\in \Gamma}} \gamma\, U_j
$ with open sets $U_j$ of diameter smaller than $\ve$. As in the proof of Lemma \ref{lem:A0S} we write a $\G$-invariant  operator $\mathcal A\in \Psi_\Gamma (\wt M, \wt E)$ 
 as
$$
\mathcal A=\sum_{i, j}\wt \chi_i \mathcal A\wt \chi_j=\sum_{\supp \wt \chi_i\cap\supp\wt \chi_j\neq \emptyset}\wt \chi_i\mathcal A\wt \chi_j+\sum_{\supp \wt \chi_i\cap\supp \wt\chi_j= \emptyset}\wt \chi_i\mathcal A\wt \chi_j,
$$
where   with a slight abuse of notation,  using the notations of \eqref{eq:tildechi}, we have set $\supp \wt \chi_i= \cup_{\gamma\in \Gamma}\supp \wt \chi_{i,\gamma}$.\\
Choosing   the diameter of the partition small enough and applying Proposition \ref{prop:ep-local} to the  $\ve$-local operators $\chi_i \mathcal A\chi_j $, we have  $$\chi_i\, \mathcal A\,\chi_j =\reallywidetilde { \pi_\sharp(\widetilde \chi_i\, \mathcal A\, \widetilde \chi_j)}, $$ which yields
\begin{equation}
	\label{eq:localdescriptionA}
\mathcal A= \sum_{\supp\wt \chi_i\cap\supp\wt \chi_j\neq \emptyset}\wt{ A}+ S(\mathcal A)=\wt A+ S(\mathcal A)
\end{equation}
with 
  \begin{equation}
   A:= \sum_{\supp\wt \chi_i\cap\supp\wt \chi_j\neq \emptyset}\pi_\sharp\left(\chi_i \,  \mathcal A\,\chi_j\right)
\end{equation}
 and   
 $$
 S(\mathcal A):=\mathcal A- \wt A=\sum_{\supp \wt\chi_i\cap\wt \supp\chi_j= \emptyset}\wt \chi_i \,\mathcal A\,\wt \chi_j\ .
 $$
  a linear operator with smooth kernel supported outside the diagonal.  \\
If $\mathcal A$ is $\ve$-local, then the above construction reduces to 
 $$
 \mathcal A= \reallywidetilde{\sum_{\supp \wt \chi_i\cap\supp\wt \chi_j\neq \emptyset}\, \wt \chi_i (\pi_*\mathcal A)\, \wt \chi_j}= 
 \wt{\pi_*\mathcal A}. 
 $$
 This proves \eqref{eq:mathfrakAtildeA} from which \eqref{eq:liftedsymbol} then follows.
\end{proof}
 
\begin{rk}In view of \eqref{eq:liftedsymbol}, properties of pseudodifferential operators such as being classical, the order, invertibility of the principal symbol can be lifted without ambiguity. 
\end{rk}
On the grounds of the above Remark, we set the following
\begin{defn}\label{def:elliptic_frak} With the notations of Proposition \ref{prop:ShubinTh1},  
an operator $\mathcal A$ in $\Upsi_{{\rm cl},\Gamma}(\wt M, \wt E)$ is {\bf elliptic } whenever $A$ is elliptic, i.e. whenever its principal symbol is invertible.
\end{defn}

On the grounds of the above proposition, we set the following
\begin{defn}
\label{defn:classlift}
Let  $ A\in \Psi_{\rm cl}(M,E)$, let $A_0$ be $\ve$-local such that $A\underset{ \rm \small diag}{\sim} A_0$ as in \eqref{eq:PDOBG}. We define the lift of the class $[A]_{\rm diag}$ to  
\begin{equation}
\label{eq:liftedOp}
\reallywidetilde {[A]_{\rm diag}}:= {[\wt A_0]}_{\rm diag}\ .
\end{equation}
\end{defn}
With this defintion at hand, for any $\mathcal A\in \reallywidetilde {[A]_{\rm diag}}$ we have
\begin{equation}\label{eq:sigmafraktildeAzero}\sigma(\mathcal A)\sim \reallywidetilde {\sigma(A_0)}.\end{equation}

\subsection{Lifting functions of operators}
 Let $E$ be a hermitian vector bundle over the closed Riemannian manifold $M$.\\
We borrow the following definition from \cite{ALNP}.
\begin{defn}
\label{defn:weight} 
We  call a {\bf weight} in $\Psi_{\rm cl}(M, E)$, an operator $Q\in \Psi_{\rm cl}(M, E)$ such that
\begin{enumerate}
\item $Q$ is invertible, namely   its kernel is non trivial or equivalently,  it admits an inverse defined on $L^2( M, E)$,
\item $Q$ has positive order $q$,
\item $Q$ has a principal angle $\theta$, which means  that there exists a ray $R_\theta= \{re^{i\theta}, \; r\geq 0\}$,  called {\bf spectral cut} , which is disjoint from the spectrum of the ${\rm End}(E_x)$-valued leading symbol $\sigma_L(Q)(x,\xi)$ for any $x\in M$, $\xi \in T_x^*M \setminus \{0\}$.
\end{enumerate}
This last condition implies that the spectrum of the operator $Q$  lies outside a cone $\Lambda_\theta$ containing the ray $R_\theta$, \cite[Lemma 1.6]{ALNP}.
\end{defn} 
\begin{ex}Let  $D$ in $ \Psi^d_{\rm cl}(M, E)$ be an essentially self-adjoint elliptic differential operator of positive order $d$.  Then $\Delta:= D^2$ is a  non-negative elliptic differential operator on $M$ of positive order $q:=2d$ and the operator $\Delta+1$ is a differential operator which defines a weight.
\end{ex}
\begin{ex}\label{ex:Delta} With the same notations as in the above example, we can instead add  to $\Delta$ a smoothing operator $ \chi_{[-\ve,\ve]}(\Delta)$ with $\ve>0$ chosen small enough so that it  coincides with the orthogonal projection $\chi_0(\Delta)$ onto the kernel of $\Delta$.  Then \begin{equation}
\label{eq:Qeps}Q_\ve(\Delta):= \Delta+\chi_{[-\ve,\ve]}(\Delta)
\end{equation} defines a weight with spectral cut  $R_\pi=\R_{\leq 0}$  .
\end{ex}
A weight $Q\in \Psi_{\rm cl}(M,E)$ satisfies  a resolvent estimate, see \cite[(9.30)]{Shub} and \cite[Cor. 1, p. 298]{Se}:
\begin{equation}
\label{eq:estimateR}
\|(Q-\lambda )^{-1}\|_{s, s+l}\leq C_{s,l} \,\vert\lambda\vert^{-1+\frac{l}{q}} \;\;\forall \,0\leq  l\leq q\;\; \forall  \la\in \Lambda_\theta\cap\{|\la|>R>0\}.
\end{equation}  
Let $\Gamma_\theta$ be a contour around the ray $R_\theta$, then to a  measurable  function $h$ on $\Gamma_\theta$, such that $\vert h(\lambda)\vert \leq \vert \lambda\vert^{-\delta} $  for some positive $\delta$,   we  can associate the  operator  \begin{equation}\label{eq:hQ}h(Q) :=-\frac{1}{2i\pi} \int_{\Gamma_\theta} h(\lambda)\, (Q-\lambda)^{-1}\, d\lambda,\end{equation}
whose symbol is given by the corresponding Cauchy integral \begin{equation}\label{eq:sigmahQ}\sigma(h(Q))\sim h_\star (\sigma(Q)):=-\frac{1}{2i\pi}\, \int_{\Gamma_\theta} h(\lambda)\, (\sigma(Q)-\lambda)^{\star -1}\, d\lambda, \end{equation}
  where the exponent $\star k$ stands for the $k$-th $\star$-product exponent of symbols. We refer the reader to any book on pseudodifferential operatorsfor the precise definition of the $\star$-product, see e.g. \cite[(3.41)]{Shub}.
\begin{ex} 
For a polynomial  $h(x)=\sum_{k=0}^n a_k x^k$, \eqref{eq:hQ} yields the  operator  $h(Q):=\sum_{k=0}^n a_k \, Q^k$ with symbol  $h_\star(\sigma(Q))= \sum_{k=0}^n a_k \sigma(Q)^{\star k}$.
\end{ex}
\begin{ex} 
If $Q=D^2$, with $D$ an essentially self-adjoint operator, then the function $h(x)=\frac{1}{\sqrt{ x}}$ yields   the   operator $\vert D\vert^{-1}:= Q^{-\frac{1}{2}}$ from which we build the sign operator
\begin{equation}
\label{eq:sgn}
{\rm sgn}(D):= D\,h(Q)= D\, \vert D\vert^{-1}\quad \text{with symbol} \quad \sigma\left({\rm sgn}(D)\right)\sim h_\star (\sigma(D)):=\sigma(D)\star \left(\sigma(\Delta)\right)^{\star-\frac{1}{2}}.
\end{equation}
\end{ex}
 
 Definition \ref{defn:weight} carries out to  $\Upsi_\Gamma(\wt M, \wt E)$, up to the fact that, in contrast with the closed case (see \cite[Def. 3.6]{ALNP} for details), the spectrum being not necessarily purely discrete in the noncompact case, we need an extra condition for the existence of an Agmon angle, defined as follows.
 
\begin{defn}\label{def:agmonL2}  For an angle $\beta$ and for $\epsilon >0$, denote 
 $$V_{\beta,\epsilon}:=\{z\in \C\;: |z|<\epsilon \}\cup \{z\in \C\setminus 0\,:\; {\rm arg} z \in (\beta-\epsilon, \beta+\epsilon)\} .$$ 
    Then $\beta$ is called an {\bf Agmon angle} for $A\in \Upsi_{{\rm cl},\Gamma}(\wt M, \wt E )$ if there is some $ \epsilon >0$ such that $\spec( A)\cap V_{\beta, \epsilon}=\emptyset$
 \end{defn}

\begin{defn}
We  call a {\bf weight} in $\Upsi_\Gamma(\wt M, \wt E)$, an operator $\mathfrak Q\in \Upsi_\Gamma(\wt M, \wt E)$ such that
\begin{enumerate}
\item $ \mathfrak Q$ is invertible in the strong sense of the term, namely that it admits an inverse defined on $L^2(\wt M, \wt E)$,
\item $\mathfrak Q$ has positive order $q$,
\item $ \mathfrak Q$ has a principal angle $\theta$ as in Definition \ref{defn:weight}, 
\item $\theta$  is an Agmon angle for  $\mathfrak Q$.
\end{enumerate}
\end{defn}
 \begin{rk} \label{rk:sigmafrak}
In view of \eqref{eq:liftedsymbol}, for any weight $\mathfrak Q\in \Upsi_\Gamma(\wt M, \wt E)$, there exists an operator $Q$ in $\Psi( M,  E)$) such that
 \begin{equation}\label{eq:sigmafrak}\sigma(\mathfrak Q)\sim \reallywidetilde{\sigma( Q)}
 \end{equation} so that it has the same order and same principal angle. Moreover it can be chosen invertible modulo addition of the projection onto its kernel. Hence for any  weight $\mathfrak Q$    in $\Upsi_\Gamma(\wt M, \wt E)$ there is a weight  $ Q$ in $\Psi( M,  E)$ with the same spectral cut and 
such that \eqref{eq:sigmafrak} holds.
 \end{rk} 
\begin{figure}[ht]
	\label{fig1}
		\caption{Agmon angle $\beta$}
	\centering
\includegraphics[]{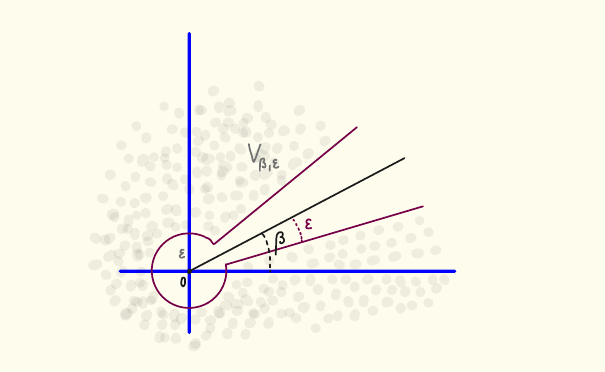}
\end{figure}

\begin{lem}\label{lem:hstarQ} Let $\mathfrak Q$  be a weight in $\Upsi_\Gamma(\wt M, \wt E)$ with spectral cut $R_\theta$ and let $ Q$  be a weight in $\Psi( M,  E)$ with the same spectral cut as in Remark \ref{rk:sigmafrak}.
With the notations of \eqref{eq:sigmafrak},  for every     measurable  function $h$ on a contour $\Gamma_\pi$ around the ray $R_\theta$,  such that
  $\vert h(\lambda)\vert\leq\vert  \lambda\vert^{-\delta} $  for some positive $\delta$, 
we have:
\begin{equation}\label{eq:hstarQ}
h_\star\left(\sigma(\mathfrak Q)\right)\sim\reallywidetilde{h_\star(\sigma(Q))},\end{equation}
where we have used the notation of (\ref{eq:sigmahQ}).
\end{lem}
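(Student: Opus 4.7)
The plan is to unravel both sides via the Cauchy integral representation \eqref{eq:sigmahQ} and reduce the claim to the statement that the symbolic $\star$-product, and hence the symbolic resolvent, are compatible with the lifting operation $\sigma \mapsto \reallywidetilde{\sigma}$.

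\textbf{Step 1: Lifting commutes with the $\star$-product.} The $\star$-product is defined locally by a universal formal series involving $\partial_x$ and $\partial_\xi$ derivatives in a local trivialising chart. Since the covering $\pi \colon \wt M \to M$ is a local isometry and the lift $\reallywidetilde{\sigma}$ is defined, via Lemma \ref{prop:ep-local}(2), by composition with $\pi$ on small enough charts, each partial derivative commutes with the lift. Hence termwise
\[
\reallywidetilde{a \star b} \; \sim \; \reallywidetilde{a} \star \reallywidetilde{b}
\]
for any two classical symbols $a,b$ on $M$, the equivalence being asymptotic equality. In particular, the lift of a smoothing symbol is smoothing (in the symbolic sense), so the relation $\sim$ is preserved by lifting.

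\textbf{Step 2: Lifting commutes with the symbolic resolvent.} Because $\lambda$ lifts trivially and by \eqref{eq:sigmafrak} we have $\sigma(\mathfrak Q) - \lambda \sim \reallywidetilde{\sigma(Q) - \lambda}$. The $\star$-inverse $(\sigma(Q)-\lambda)^{\star -1}$ is obtained by the standard recursion: one inverts the algebraic principal symbol $\sigma_L(Q)(x,\xi)-\lambda$ (the existence of a principal angle $\theta$ for $Q$ and $\mathfrak Q$ is matched by assumption) and then constructs lower order components by successive $\star$-products with $\sigma(Q)-\lambda$. By Step 1, each of these recursion steps commutes with lifting, so
\[
(\sigma(\mathfrak Q)-\lambda)^{\star -1} \;\sim\; \reallywidetilde{(\sigma(Q)-\lambda)^{\star -1}}\qquad \forall \lambda \in \Gamma_\theta.
\]

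\textbf{Step 3: Exchange of the Cauchy integral and lifting.} The ellipticity with spectral cut yields resolvent symbol estimates of the form $\lvert \partial_x^\alpha \partial_\xi^\beta (\sigma(Q)-\lambda)^{\star -1}\rvert \leq C_{\alpha,\beta}\,(1+\lvert \xi\rvert + \lvert \lambda\rvert^{1/q})^{-q-\lvert\beta\rvert}$ uniformly on $\Gamma_\theta$ (see \eqref{eq:estimateR} and the references cited before it). Combined with the decay $\lvert h(\lambda)\rvert \leq \lvert \lambda\rvert^{-\delta}$, the integrand is absolutely integrable in every symbol seminorm, and the corresponding expansion terms are summable in $\lambda$. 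Since lifting is a continuous operation for these seminorms (it is literally pullback on trivialising charts, preserving the uniform estimates used to define $\Upsi$), we may pull the lift under the integral sign:
\[
\reallywidetilde{h_\star(\sigma(Q))}
= \reallywidetilde{-\frac{1}{2i\pi}\int_{\Gamma_\theta} h(\lambda)\,(\sigma(Q)-\lambda)^{\star -1}\, d\lambda}
\;\sim\; -\frac{1}{2i\pi}\int_{\Gamma_\theta} h(\lambda)\,(\sigma(\mathfrak Q)-\lambda)^{\star -1}\, d\lambda
= h_\star(\sigma(\mathfrak Q)),
\]
which is exactly \eqref{eq:hstarQ}.

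\textbf{Main obstacle.} The only delicate point is the last step: justifying that the equivalence $\sim$ (equality modulo smoothing symbols) is preserved through the contour integral and the lift simultaneously. What makes it work is that the resolvent estimates are uniform on $\Gamma_\theta$ for both $Q$ and $\mathfrak Q$ (thanks to the matching principal angle and the Agmon condition for $\mathfrak Q$), so that the symbolic asymptotic expansion of the integrand has coefficients satisfying uniform bounds in $\lambda$, and the termwise integrated expansion is the symbolic expansion of $h_\star(\sigma(\mathfrak Q))$; the same estimates transfer verbatim to $\reallywidetilde{\sigma(Q)}$ because lifting preserves the uniform symbol seminorms.
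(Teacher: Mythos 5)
Your proof follows essentially the same approach as the paper: reduce to locality of the $\star$-product, deduce compatibility of lifting with the symbolic resolvent, and then pass through the Cauchy integral. The paper's own proof is considerably terser; your Steps 2 and 3, in particular the resolvent-estimate justification for interchanging lifting with the contour integral, supply detail that the paper takes for granted but leaves implicit.
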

\begin{proof}
The star product $\star$, which is a local operation for it  only involves derivatives,   commutes with the lift. For two local symbols $\sigma$ and $\tau$, we have
$\reallywidetilde{\sigma\star \tau}\sim \wt\sigma\star \wt \tau$, which implies $\reallywidetilde{(\sigma-\lambda)^{\star -1}}\sim (\wt \sigma-\lambda)^{\star -1}$ and hence
$$\reallywidetilde{ h_\star (\sigma )}\sim h_\star (\wt \sigma)=-\frac{1}{2i\pi}\, \int_\Gamma h(\lambda)\, (\wt \sigma-\lambda)^{\star -1}.$$Implementing $h_*$    therefore yields
$$\sigma(\mathfrak Q )\sim\reallywidetilde{\sigma(Q)}\Longrightarrow h_\star(\sigma(\mathfrak Q))\sim h_\star(\reallywidetilde{\sigma(Q)})\Longrightarrow h_\star(\sigma(\mathfrak Q))\sim \reallywidetilde{h_\star(\sigma(Q))} .$$
\end{proof}

\begin{ex} Let $Q$ be a weight with spectral cut $R_\theta$. 
For $\Re(z)>0$, the function $h(x)=x_\theta^{-z}$ with the complex power determined by the angle $\theta$,  yields the complex power
\begin{equation}\label{Qz}Q_\theta^{-z}:=-\frac{1}{2i\pi}\, \int_{\Gamma_\theta} \lambda_\theta^{-z}\, (Q-\lambda)^{\star -1}\, d\lambda,\end{equation}
which  can be extended to any complex value $z$ setting $Q_\theta^{-z}:=Q^k\, Q_\theta^{-z+k}$ for $\Re(z)>-k$.
\end{ex}

 With the same notations as in the Example \ref{ex:Delta}, the differential operator $\Delta$ lifts to a differential operator $\wt \Delta$ whose leading symbol is the lifted leading symbol of $\Delta$.

With the notation introduced in Appendix \ref{sec:appHM}, the isomorphism $\Phi$  induces a map
\begin{eqnarray*}
\Upsi_{{\rm cl},\Gamma} (\wt M, \wt E) &\longrightarrow &\Psi(M; E\otimes \mathcal H)\\
A&\longmapsto &\Phi^\sharp A.
\end{eqnarray*}
 Let $\Delta_{\mathcal H}:=\Phi^\sharp \wt \Delta$  be the corresponding elliptic operator in $\Psi_{\rm cl}(M, E\otimes {\mathcal H})$.
    \begin{prop}
\label{prop:chieps} For any positive $\ve$, the operator  \begin{equation}\label{eq:tildeQeps}Q_\ve(\wt \Delta):=\wt \Delta+\chi_{[-\ve,\ve]}(\wt \Delta),\end{equation} resp. $Q_\ve(D_{\mathcal H}):=\Delta_{\mathcal H}+\chi_{[-\ve,\ve]}(\Delta_{\mathcal H})$, defines a weight in  $\Upsi_{{\rm cl},\Gamma} (\wt M, \wt E)$, resp.  $\Psi_{\rm cl}(M, E\otimes \mathcal H)$  with spectral cut  $R_\pi=\R_{\leq 0}$   and we have
\begin{equation}\label{eq:QHtilde}Q_\ve(\Delta_{\mathcal H})=\Phi^\sharp Q_\ve(\wt \Delta).
\end{equation}
\end{prop}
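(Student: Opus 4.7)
The plan is to establish each of the four weight axioms for $Q_\ve(\wt\Delta)$, then transfer them to $Q_\ve(\Delta_{\mathcal H})$ via the isomorphism $\Phi$, which will also yield the intertwining identity \eqref{eq:QHtilde}.

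\textbf{Step 1 (Pseudodifferential structure).} I would first argue that $\chi_{[-\ve,\ve]}(\wt\Delta)$, defined via the Borel functional calculus of the self-adjoint lifted Laplacian, lies in $\mathcal S\Upsi_\Gamma^{-\infty}(\wt M,\wt E)$. Since $\wt\Delta$ is $\Gamma$-invariant, any spectral projection is also $\Gamma$-invariant. For the smoothing part, one observes that for any $N\in\N$ the bounded measurable function $\lambda\mapsto(1+|\lambda|)^N\chi_{[-\ve,\ve]}(\lambda)$ is bounded, so the operators $(\wt\Delta+I)^N\chi_{[-\ve,\ve]}(\wt\Delta)$ and $\chi_{[-\ve,\ve]}(\wt\Delta)(\wt\Delta+I)^N$ are bounded on $L^2$. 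By Lemma \ref{lem:ShuThm1} this yields boundedness $H^s(\wt M,\wt E)\to H^t(\wt M,\wt E)$ for all $s,t\in\R$, i.e.\ membership in $\cap_{m\in\R}\Upsi^m_\Gamma(\wt M,\wt E)$, which equals $\mathcal S\Upsi^{-\infty}_\Gamma(\wt M,\wt E)$ by Proposition \ref{prop:Comparison}. Adding this smoothing operator to the differential operator $\wt\Delta\in\Upsi_{{\rm cl},\Gamma}^{2d}(\wt M,\wt E)$ keeps us in $\Upsi_{{\rm cl},\Gamma}^{2d}(\wt M,\wt E)$ and does not affect the leading symbol, so that $\sigma_L(Q_\ve(\wt\Delta))=\sigma_L(\wt\Delta)=\reallywidetilde{\sigma_L(\Delta)}$ by Lemma \ref{prop:ep-local}(2).

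\textbf{Step 2 (Spectral cut and principal angle).} Since $\Delta=D^2$ is a non-negative elliptic differential operator, $\sigma_L(\Delta)(x,\xi)$ is positive definite for $\xi\neq 0$, hence so is its lift. In particular the ray $R_\pi=\R_{\leq 0}$ is disjoint from ${\rm spec}(\sigma_L(Q_\ve(\wt\Delta))(x,\xi))\subset (0,\infty)$ for $\xi\neq 0$, giving the principal angle.

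\textbf{Step 3 (Invertibility and Agmon angle).} Using the spectral decomposition of the self-adjoint operator $\wt\Delta\geq 0$, I would split $L^2(\wt M,\wt E)$ as the orthogonal sum of $\mathrm{Ran}\,\chi_{[-\ve,\ve]}(\wt\Delta)$ and its complement. On the former, $Q_\ve(\wt\Delta)$ acts as $\wt\Delta+I$, which is bounded below by $1$; on the latter, it acts as $\wt\Delta$, bounded below by $\ve$. Hence $Q_\ve(\wt\Delta)$ is bounded below by $\min(1,\ve)>0$, so it is invertible on $L^2$ with spectrum contained in $[\min(1,\ve),\infty)$. This spectrum avoids a small disc around the origin as well as a small sector around $R_\pi$, which gives the Agmon angle condition of Definition \ref{def:agmonL2} with $\beta=\pi$.

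\textbf{Step 4 (Transfer via $\Phi$).} The final identity will follow from the fact that $\Phi^\sharp$ is an isomorphism of $\Gamma$-equivariant Hilbert-module structures, so it intertwines the self-adjoint functional calculus: $\chi_{[-\ve,\ve]}(\Delta_{\mathcal H})=\Phi^\sharp \chi_{[-\ve,\ve]}(\wt\Delta)$, and hence $Q_\ve(\Delta_{\mathcal H})=\Phi^\sharp Q_\ve(\wt\Delta)$. Because $\Phi^\sharp$ preserves order, leading symbol, invertibility on $L^2$, spectrum, and sends $\Upsi_{{\rm cl},\Gamma}(\wt M,\wt E)$ to $\Psi_{\rm cl}(M;E\otimes\mathcal H)$ (Appendix \ref{sec:appHM}), all four weight axioms transfer verbatim, giving simultaneously that $Q_\ve(\Delta_{\mathcal H})$ is a weight in $\Psi_{\rm cl}(M,E\otimes\mathcal H)$ with spectral cut $R_\pi$. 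The main technical hurdle is Step 1—namely verifying that the measurable functional calculus produces a genuinely smoothing operator of the correct uniform class on the noncompact manifold $\wt M$—while the remaining spectral statements are straightforward consequences of self-adjointness and the gap created by the projection.
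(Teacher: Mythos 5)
Your proposal is correct and follows the same overall architecture as the paper's proof: establish the weight axioms for $Q_\ve(\wt\Delta)$ and then transfer everything to $Q_\ve(\Delta_{\mathcal H})$ via the functional-calculus compatibility of $\Phi$ (Proposition \ref{prop:dict}). The difference is in the level of detail. Where the paper simply cites \cite[Cor.~3.6]{Va} (resp.\ \cite{BFKM}) for the fact that $\chi_{[-\ve,\ve]}(\wt\Delta)$ is a smoothing operator, you replace that citation with a self-contained Sobolev argument: boundedness of $(\wt\Delta+I)^N\chi_{[-\ve,\ve]}(\wt\Delta)$ and $\chi_{[-\ve,\ve]}(\wt\Delta)(\wt\Delta+I)^N$ on $L^2$ for all $N$, combined with Proposition \ref{prop:Comparison}, places the spectral projection in $\mathcal S\Upsi^{-\infty}_\Gamma(\wt M,\wt E)$. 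This is essentially the content of Vaillant's result; to make the step airtight you should note that it uses the standard fact that $(\wt\Delta+I)^{-1}$ is a uniform classical $\Psi$DO of order $-2d$ (equivalently, that the Sobolev scale can be described via $\wt\Delta+I$, as the paper remarks after introducing $H^s(X,F)$), since Lemma \ref{lem:ShuThm1} as stated only applies to operators already known to lie in $\Upsi^m$. You also make explicit the verification of the principal angle (positivity of the leading symbol) and the Agmon-angle condition from Definition \ref{def:agmonL2} --- the latter being the genuinely extra requirement in the covering setting, coming from the possibly non-discrete $L^2$-spectrum --- whereas the paper condenses these into "invertibility is an immediate consequence of non-negativity" and a pointer to \cite{ALNP}. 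Your expanded treatment is a useful gain in rigor; the paper's terseness buys brevity at the cost of leaving the Agmon-angle check implicit.
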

\begin{proof} To ensure that the operator  $Q_\ve(\wt D)$, resp. $Q_\ve(D_{\mathcal H})$ defines a  weight (for the latter, see also \cite{ALNP}), we need to check that 
\begin{enumerate}
\item the operator  $\chi_{[-\ve,\ve]}(\wt \Delta)$, resp. $\chi_{[-\ve,\ve]}( \Delta_{\mathcal H})$ has a smooth Schwartz kernel in  $\Psi_\Gamma(\wt M, \wt E)$, resp.  $\Psi(M, E\otimes \mathcal H)$) which follows from   \cite[Cor 3.6]{Va} , resp. from \cite{BFKM}.
\item the operator  $\wt \Delta+\chi_{[-\ve,\ve]}(\wt \Delta)$, resp. $ \Delta_{\mathcal H}+\chi_{[-\ve,\ve]}( \Delta_{\mathcal H})$  is invertible, which is  an immediate consequence of its non-negativity.
\end{enumerate} 
The compatibility of the map $\Phi$ with functional calculus (see \ref{prop:dict}) implies  \eqref{eq:QHtilde} .
\end{proof}

Vassout's functional calculus on groupoids recalled in Appendix \ref{sec:groupoids},  allows to extend the notion of weight to the  groupoid  $G(M):=(\wt M\times \wt M)/\Gamma$ associated  with  $\Upsi_{{\rm cl},\Gamma} (\wt M, \wt E)$, by the  isomorphism $\rho$ defined in  \eqref{eq:G_x}. Indeed, the map  
\begin{eqnarray*}
 \Upsi_{{\rm cl},\Gamma} (\wt M, \wt E)&\longrightarrow& \Psi(G, E)\\
A&\longmapsto & \rho^\sharp A, 
\end{eqnarray*}
 induced preserves the properties 1)-3) of a weight and therefore transforms a weight $\mathfrak Q\in\Upsi_{{\rm cl},\Gamma} (\wt M, \wt E)$ with spectral cut $\theta$ to a weight $\rho^\sharp (\mathfrak Q)$ on the associated groupoid with the same spectral cut. The map $\rho^\sharp$ preserves the estimate \eqref{eq:estimateR} on weights, which enables us to transport the related functional calculus from the groupoid to $\Upsi_{{\rm cl},\Gamma} (\wt M, \wt E)$.  With the  notations of \eqref{eq:hQ}   and for a measurable  function $h$ on  on a contour $\Gamma_\pi$ around the ray $R_\pi=]-\infty, 0]$,  such that $h(\lambda)\leq \lambda^{-\delta} $  for some positive $\delta$, we can  define 
\begin{equation}\label{eq:rhohQ}h(\mathfrak Q)=\left(\rho^\sharp\right)^{-1} ( h(\rho^\sharp (\mathfrak Q)),
\end{equation} 
whose symbol is given by \begin{equation}\label{eq:sigmahfrakQ}\sigma(h(\mathfrak Q))\sim h_\star (\sigma(\mathfrak Q)):=-\frac{1}{2i\pi}\, \int_{\Gamma_\theta} h(\lambda)\, (\sigma(\mathfrak Q)-\lambda)^{\star -1}\, d\lambda.\end{equation}
\begin{ex}
For $\Re(z)>0$, the function $h(x)=x_\theta^{-z}$ with the complex power determined by the angle $\theta$,  yields the complex power
\begin{equation}\label{frakQz}\mathfrak Q_\theta^{-z}:=-\frac{1}{2i\pi}\, \int_{\Gamma_\theta} \lambda_\theta^{-z}\, (\mathfrak Q-\lambda)^{\star -1}\, d\lambda,\end{equation}
where $\mathfrak Q\in\Upsi_{{\rm cl},\Gamma} (\wt M, \wt E)$ is a weight with spectral cut $R_\theta$.
\end{ex}

\subsection{Lifting complex powers to coverings}
We now specialise to  $h:x\mapsto x_\theta^{-z}$, where $\theta$ stands for the determination of the complex power. Applied to a weight $Q\in \Psi_{\rm cl}(M,E)$ with spectral cut $R_\theta$, this gives rise to complex powers $Q_\theta^{-z} \in  \Psi_{\rm cl}(M,E)$.
\begin{rk}   Let $m\in \R$, $z\in \C$ and $Q\in \Psi^m_{\rm cl}(G, E)$ a positive (with respect to the $L^2$-inner product) elliptic, invertible operator. It defines a weight with spectral cut  $R_\pi=\R_{\leq 0}$  (we drop the mention of $\theta=\pi$ in the notation). The complex power  $Q^{-z}$
  is defined in \cite[p. 25]{Vas}  according to  \eqref{frakQz}
and proved to  belong to $ \Psi_{\rm cl}^{-mz} (G, E)$ and to act  as an an element of  $\mathcal L(H^{t-m\Re z}(\mathcal W), H^t(\mathcal W))$ for any $t\in \R$. By Proposition \ref{prop:Gvscov}, the inverse map $\rho^{-1} $ identifies   the operator $Q^{-z}$   with  the  operator ${\rho^\sharp}^{-1}(Q^{-z})$ in  $\Upsi_{\rm cl}^{-mz} (\wt M, \wt E)$. \\
Vassout's construction, which is carried out for positive operators, easily extends to any weight using an appropriate spectral cut.
\end{rk}

Families $z\mapsto Q^{-z}$  of complex powers are holomorphic, a notion we briefly recall.

\begin{defn}\label{defn:holfamilies} Let  $U$ be an open
subset of $\R^n$, let $V$ be a linear space and let $W$ be a domain in $\C$. A holomorphic
family of classical (also called polyhomogeneous) symbols  on $U$ with values in End$(V)$ parametrized by $W$ of order $\alpha:W\to \C$ is a function
$$\s(z)(x,\xi) := \s(z,x,\xi)\in\Ci(W \times U \times \R^n,
{\rm End} (V))$$ for which:
\begin{enumerate}
\item $\s(z)(x,\xi)$ is holomorphic at $z\in W$ as an element of
$\Ci(W \times U \times \R^n, {\rm End } V)$ and
\begin{equation}\label{e:logclassical}
 \s(z)(x,\xi) \sim \sum_{j\geq 0}
 \s_{\alpha(z)-j}(z)(x,\xi)
\end{equation}
is a classical symbol of order $\alpha(z)$ where the function  is holomorphic;
\item for any integer $N\geq 1$ the remainder
$\ds\sigma_{(N)}(z)(x, \xi):= \sigma(z)(x,\xi)- \sum_{j=0}^{N-1}
\sigma_{\alpha(z)-j}(z)(x, \xi)$ 
is holomorphic in $z\in W$ as an
element of $\Ci(W \times U \times \R^n, \End V)$ with $k^{\rm
th}$ $z$-derivative
\begin{equation}\label{e:kthderivlogclassical}
\sigma^{(k)}_{(N)}(z)(x, \xi) := \frac{\partial^k}{\partial z^k}(\sigma_{(N)}(z)(x, \xi)) 
\end{equation}
defining a locally uniform family of symbols of order $\Re(\alpha(z))-N+\ve$ in a compact neighborhood of any  $z_0 \in W$, for any positive $\e$. 
\end{enumerate}

A  family $z\mapsto A(z)$ in $\Psi_{\rm cl}(M,E)$ parametrised by a domain $W\subset \C$ is holomorphic
if in each local trivialisation of $E$ one has 
$$A(z) = {\rm Op}(\sigma(A(z)) + S(z)
$$ 
with $\sigma (A(z))$  a holomorphic
family of classical symbols and $S(z)$ a 
operator with Schwartz kernel $R(z,x,y)\in \Ci(W\times M\times
M,E\boxtimes E)$ holomorphic in $z$.
\end{defn}
There are at least two types of approaches  to show that complex powers define holomorphic families. One by Seeley (\cite{Se}, see also \cite[Thm. 11.2]{Shub}) 
 using in a central manner the calculus of the symbol of the resolvent of the operator, and a cohomological  construction by Guillemin  \cite[Thm 5.2]{Gui} axiomatic in nature and  therefore easily transposable to more general contexts (see e.g. \cite{ALNV}). Note that Seeley and Shubin consider complex powers of differential elliptic operators but their construction can be extended to classical pseudodifferential operators using the symbol of the resolvent of these operators. Guillemin applies it to classical pseudodifferential operators with leading symbols that have a unique determination of the  logarithm.

 These constructions, which rely on basic properties of classical pseudodifferential operators, namely 

\begin{enumerate}
\item an estimate of the type \eqref{eq:estimateR}  on the resolvent of a weight,
leading to the existence of Cauchy integrals for weights, 
\item  the existence of a map $\Op:\sigma\mapsto \Op(\sigma)$ taking  symbols to operators in the algebra, that \lq\lq commutes" with  Cauchy integrals, 
\end{enumerate}   extend to very general algebras of classical pseudodifferential operators, including  classical pseudodifferential operators on groupoids and uniform classical pseudodifferential operators on coverings.

In particular, a weight   $\mathfrak Q\in \Upsi_{\Gamma, {\rm cl}}(\wt M, \wt E)$  gives rise to a holomorphic family
	\begin{equation}
	\label{eq:weightonlift}
 \mathfrak Q^{-z}=\left( \rho^\sharp\right)^{-1}\left( \rho^\sharp (\mathfrak Q)\right)^{-z},\end{equation}
where the map $\rho: \Upsi_{\Gamma, {\rm cl}}(\wt M, \wt E)\longrightarrow G(M, E)$ is defined in Appendix \ref{sec:groupoids}.

\section{Linear forms and trace-defect formulae on $\Gamma$-invariant operators }

\subsection{Local linear forms on classical pseudodifferential operators}
\label{section2}
Locality plays a fundamental role in the lifting procedure; in this section we single out \lq\lq local\rq\rq linear forms which can be lifted to coverings.

\subsubsection{The Wodzicki residue and canonical trace densities}
Let $X$ and $F$ be of bounded geometry as in the previous section, let $m\in\C$. To an operator $A={\rm Op}(\sigma(A))+ R(A)\in \Psi_{\rm cl}^m(X, F)$ and  $x\in M$ with   symbol $\sigma(A)(x,\cdot)\sim\sum_{j=0}^\infty \sigma_{m-j}(A) (x, \cdot)$ in a local trivialisation around $x$, where $\sigma_{m-j}(A),\; j\in \Z_{\geq 0}$  are the positively homogeneous components of the symbol of degree $m-j$,   we assign 
\begin{itemize}
\item the {\bf pointwise Wodzicki} residue $${\rm Res}_x(A):= \frac{1}{(2\pi)^n}\,\int_{\vert \xi\vert=1} {\rm tr}\left(\sigma_{-n}(A)(x,\xi)\right)\, d\xi,$$ 
\item the  {\bf pointwise canonical trace}  $${\rm TR}_x(A):= \frac{1}{(2\pi)^n}\, \cutoffint_{\R^n} {\rm tr}\left(\sigma(A)(x,\xi)\right)\, d\xi:=\frac{1}{(2\pi)^n}\,{\rm fp}_{R\to \infty} \int_{\vert \xi\vert\leq R} {\rm tr}\left(\sigma(A)(x,\xi)\right)\, d\xi,$$ where the abreviation fp  for finite part  means we pick  the constant term in the asymptotic expansion as $R$ tends to infinity and $\cutoffint_{\R^n}$ is the corresponding cut-off integral.  Here, tr stands for the fibrewise trace. \end{itemize}
We recall well-known facts.
\begin{lem}
\label{lem:Alocalised} {} \cite{W, KV} Given an operator $A\in \Psi_{\rm cl}(X,F)$, both  ${\rm Res}_x(A)\, dx$ and --whenever the order of $A$  does not lie in $[-n, +\infty[ \,\cap \,\Z$-- ${\rm TR}_x(A)\, dx$ define a global density on $X$.  
\end{lem}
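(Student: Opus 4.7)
Since the claim is local, it suffices to verify that under a change of trivialising chart---a diffeomorphism $\psi$ on the base combined with a fibrewise linear change of frame---the scalar integrands transform so that ${\rm Res}_x(A)\,dx$ and ${\rm TR}_x(A)\,dx$ are intrinsic $n$-densities. The bundle direction is clear, since both ${\rm Res}_x$ and ${\rm TR}_x$ factor through the fibrewise trace $\tr$, which is conjugation-invariant. The content is therefore the diffeomorphism invariance.

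My plan is to invoke Kuranishi's change-of-variables formula: if $\sigma^{(1)}$ and $\sigma^{(2)}$ are the symbols of $A$ in two overlapping charts connected by $y=\psi(x)$, then
$$\sigma^{(2)}(A)(y,\eta)\;\sim\;\sum_{\alpha}\frac{1}{\alpha!}\,\partial_\xi^\alpha\sigma^{(1)}(A)\bigl(\psi^{-1}(y),(d\psi_{\psi^{-1}(y)})^{T}\eta\bigr)\,p_\alpha(y,\eta),$$
where $p_0\equiv 1$ and each $p_\alpha$ is polynomial in $\eta$ arising from the nonlinear part of $\psi$. A crucial structural feature of Kuranishi's expansion is that every $|\alpha|\geq 1$ correction involves derivatives of $\sigma^{(1)}$ with strictly decreased net homogeneity contribution.

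For the Wodzicki residue density, the key point is that after Kuranishi the $|\alpha|\geq 1$ corrections to the $(-n)$-homogeneous component of $\sigma^{(2)}(A)$, though not vanishing at the symbol level, integrate to zero over the unit sphere: each such correction is of the form $\partial_\xi^\beta\sigma^{(1)}_{m-k}\cdot q_\beta$ with $\sigma^{(1)}_{m-k}$ positively homogeneous of integer degree different from $-n$, and a Stokes argument on $S^{n-1}$ kills it. After these cancellations only the $\alpha=0$ term survives under the sphere integral, giving
$$\int_{|\eta|=1}\tr\sigma^{(2)}_{-n}(A)(y,\eta)\,d\eta=\int_{|\eta|=1}\tr\sigma^{(1)}_{-n}(A)\bigl(x,(d\psi_x)^{T}\eta\bigr)\,d\eta.$$
The standard identity $\int_{S^{n-1}}f(L\omega)\,d\omega=|\det L|^{-1}\int_{S^{n-1}}f(\omega)\,d\omega$ for $f$ positively $(-n)$-homogeneous, applied with $L=(d\psi_x)^{T}$, then produces the Jacobian factor $|\det d\psi_x|^{-1}$ that is precisely the transformation law for a density.

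For the canonical trace density the argument is parallel but uses the cut-off integral $\cutoffint_{\R^n}$ in place of the sphere integral. The non-integer order assumption is crucial and is used three times: first, it ensures that $\cutoffint_{\R^n}$ is insensitive to the asymptotic splitting of the symbol (no log-divergences appear); second, it guarantees that the linear change of variable $\eta=(d\psi_x)^{-T}\xi$ inside $\cutoffint_{\R^n}$ is valid and produces the Jacobian $|\det d\psi_x|^{-1}$ with no residue anomaly; and third, it forces the $|\alpha|\geq 1$ Kuranishi corrections to be cut-off integrals of total $\xi$-divergences, hence to vanish. The main obstacle is the systematic bookkeeping of these corrections and the verification that the non-integer order hypothesis removes every potential Wodzicki-residue obstruction to the identities required; this is the content of the Kontsevich--Vishik analysis carried out in \cite{KV}.
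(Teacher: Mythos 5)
The paper does not prove this lemma; it is invoked as a known fact with citations to Wodzicki and Kontsevich--Vishik, so there is no in-paper argument to compare against. Your sketch is a correct reconstruction of the classical Kuranishi change-of-variables verification from those references, and the conclusion and the role of the non-integer-order hypothesis are correctly identified.

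One step would need filling in to make the sketch a full proof. The $|\alpha|\geq 1$ Kuranishi corrections $\partial_\xi^\alpha\sigma^{(1)}\cdot p_\alpha$ are not literally total $\xi$-divergences, so they do not vanish under $\int_{S^{n-1}}$ or $\cutoffint_{\R^n}$ in one stroke; one must telescope by iterated integration by parts. Each step produces a term of the form $\partial_{\xi_j}\bigl(\partial_\xi^{\alpha-e_j}\sigma^{(1)}\cdot p_\alpha\bigr)$ -- which is $\partial_{\xi_j}$ of a function positively homogeneous of degree $-n+1$ in the residue case, and $\partial_{\xi_j}$ of a classical symbol of non-integer order in the canonical-trace case -- plus a remainder in which one derivative has been moved from $\sigma^{(1)}$ onto $p_\alpha$. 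The iteration terminates because the Kuranishi polynomials satisfy $\deg p_\alpha\leq |\alpha|/2<|\alpha|$, so $\partial_\xi^\alpha p_\alpha=0$. The two elementary identities powering the telescoping are $\int_{S^{n-1}}\partial_{\xi_j}f\,d\sigma=0$ for $f$ positively homogeneous of degree $-n+1$ (divergence theorem on the annulus $1<|\xi|<R$, the two boundary contributions cancelling by homogeneity while the bulk scales like $\log R$), and $\cutoffint_{\R^n}\partial_{\xi_j}\tau\,d\xi=0$ for $\tau$ classical of non-integer order (divergence theorem on $|\xi|\leq R$, the boundary $R$-asymptotics having no constant term when the order is non-integer). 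With these two facts and the telescoping made explicit, your argument is complete and matches the Kontsevich--Vishik treatment the paper cites.
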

\subsubsection{The Wodzicki residue and the canonical trace on closed manifolds}
From now on in this section we assume that  {\bf $X=M$ is a closed manifold} and $F=E$ is a vector bundle over $M$. We adopt the notations of \cite{Sc} and of Lemma \ref{lem:Alocalised}.

\begin{prop} 
Let  $A\in \Psi_{\rm cl}(M, E)$   have  local symbol  $\sigma_U(A)$ over any trivialising open subset $U$,  and let $(U_i, \chi_i)_{i\in I}$ be a partition of unity on $M$ subordinated to a trivialisation of $E$.
\begin{enumerate}
\item The Wodzicki residue density integrates to the Wodzicki residue \cite{W}
\begin{eqnarray}\label{eq:Resint}
{\rm Res}\left(A\right) 
&:=& \int_{M} {\rm Res}_x( A) \, dx\\
&=&  \sum_{\supp \chi_i\cap\supp\chi_j\neq \emptyset}\int_{U_i\cap U_j}\chi_i(x)    {\rm Res}_x\left(\Op(\sigma_{U_i\cap U_j})(A)\right)\, \chi_j(x) dx. \nonumber
\end{eqnarray}
\item  Provided the order of the operator  does not lie in $\Z_n:=[-n,+ \infty[\,\cap\, \Z$, the canonical trace density integrates to the canonical trace \cite{KV,Le}
\begin{eqnarray}\label{eq:TRint}
{\rm TR}\left(A\right) 
&:=&\int_{M}{\rm TR}_x(A)\, dx\\
&=&   \sum_{\supp \chi_i\cap\supp\chi_j\neq \emptyset}\int_{U_i\cap U_j}\chi_i(x) \, {\rm TR}_x
\left(\Op(\sigma_{U_i\cap U_j}(A))   \right)\,\chi_j(x)\, dx, \nonumber
\end{eqnarray} 
 \end{enumerate}
 which are both well-defined,  independently of the choice of  trivialisation and subordinated partition of unity.\\ When $A$ is trace-class, namely when the order of $A$ has   real part smaller  than $-n$, then ${\rm Res}(A)=0$ and $ {\rm TR}\left(A\right)= {\rm Tr}\left(A\right)$, where ${\rm Tr}$ stands for the ordinary trace of $A$.
Furthermore, we have
\begin{eqnarray*}
\label{eq:TRDelta}
A\sim B\Longrightarrow {\rm Res}\left(A\right)={\rm Res}\left(B\right)\quad &\text{and}&\quad A\underset{ \rm \small diag}{\sim} B\Longrightarrow {\rm TR}\left(A\right)={\rm TR}\left(B\right)
\\
 {\rm Res}(A)={\rm Res}(A_0)= {\rm Res}([A])\quad &\text{and}&\quad  {\rm TR}(A)={\rm TR}(A_0)= {\rm TR}([A]_{\rm diag}).
\end{eqnarray*}
\end{prop}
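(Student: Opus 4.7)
The plan is to deduce everything from Lemma~\ref{lem:Alocalised}, together with Fourier inversion on the symbol side and the observation that a smoothing operator has symbol whose homogeneous components all vanish. The main input is that the densities ${\rm Res}_x(A)\,dx$ and ${\rm TR}_x(A)\,dx$ (the latter under the order restriction) are intrinsic, so integrating them over the closed manifold $M$ already produces well-defined functionals independent of the trivialisation and of the subordinated partition of unity. The partition-of-unity formulas (\ref{eq:Resint}) and (\ref{eq:TRint}) then follow from $\int_M = \sum_{i}\int_M \chi_i$ together with locality: on a trivialising patch the density of $\chi_i A \chi_j$ equals $\chi_i(x)\chi_j(x)$ times the density of ${\rm Op}(\sigma_{U_i\cap U_j}(A))$, and the restriction of the sum to pairs with $\supp\chi_i\cap\supp\chi_j\neq\emptyset$ just absorbs the terms whose contribution vanishes identically.

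For the trace-class case I would argue pointwise. If $m$ denotes the order of $A$ and $\Re(m)<-n$, every homogeneous component $\sigma_{m-j}(A)$ of the expansion has real part strictly less than $-n$, so $\sigma_{-n}(A)=0$ and ${\rm Res}_x(A)\equiv 0$; integrating gives ${\rm Res}(A)=0$. In the same regime the full symbol is absolutely integrable in $\xi$, so the cut-off integral defining ${\rm TR}_x$ reduces to the ordinary integral, and chart-by-chart Fourier inversion gives ${\rm TR}_x(A)\,dx = K_A(x,x)\,dx$; integrating over $M$ then recovers the operator trace $\Tr(A)$.

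For compatibility with $\sim$ and $\underset{\rm diag}{\sim}$, I would next note that if $A\sim B$ then $A-B$ is smoothing, its symbol has all homogeneous components equal to zero in every chart, and hence ${\rm Res}_x(A-B)\equiv 0$, which yields ${\rm Res}(A)={\rm Res}(B)$. If additionally $A\underset{\rm diag}{\sim}B$, then $A-B$ is smoothing with kernel vanishing on the diagonal, hence trace-class with $\Tr(A-B)=\int_M K_{A-B}(x,x)\,dx=0$; combined with the trace-class identification from the previous paragraph this yields ${\rm TR}(A)={\rm TR}(B)$. The identities ${\rm Res}(A)={\rm Res}(A_0)={\rm Res}([A])$ and ${\rm TR}(A)={\rm TR}(A_0)={\rm TR}([A]_{\rm diag})$ are then immediate from Lemma~\ref{lem:A0S}, which supplies an $\ve$-local representative $A_0$ with $A\underset{\rm diag}{\sim}A_0$ (and in particular $A\sim A_0$).

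The main obstacle I expect is the identification $\TR(A)=\Tr(A)$ on trace-class operators: one must verify that the finite-part prescription in the cut-off integral becomes vacuous as soon as the symbol is integrable in $\xi$, and then Fourier-invert chart by chart to recover the Schwartz kernel on the diagonal, finally using the partition of unity to assemble the local contributions into the global operator trace. Everything else reduces either to the globalisation already guaranteed by Lemma~\ref{lem:Alocalised} or to the vanishing of asymptotic symbols and of diagonal restrictions of off-diagonal smoothing kernels.
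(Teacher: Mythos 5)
Your proof is essentially correct and follows the same general strategy as the paper, which itself is extremely terse (it refers to Wodzicki and Kontsevich--Vishik for existence, then observes that $\mathrm{Res}_x$ vanishes on operators with smooth kernel and $\mathrm{TR}_x$ vanishes on operators with smooth kernel supported off the diagonal, from which all the claims follow). You flesh out the details the paper leaves implicit: the $\mathrm{TR}=\mathrm{Tr}$ identification via Fourier inversion, and the $\sim$ and $\underset{\rm diag}{\sim}$ invariance.

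Two small points worth flagging. First, your justification of the partition-of-unity formula says the density of $\chi_i A\chi_j$ equals $\chi_i(x)\chi_j(x)$ times the density of $\mathrm{Op}(\sigma_{U_i\cap U_j}(A))$; this is not literally true pointwise, since the full symbol of $\chi_iA\chi_j$ picks up derivative terms in $\chi_i$, so $\mathrm{Res}_x(\chi_iA\chi_j)\neq\chi_i(x)\chi_j(x)\mathrm{Res}_x(A)$ in general. The correct and simpler route is to write $1=\bigl(\sum_i\chi_i\bigr)\bigl(\sum_j\chi_j\bigr)$ under the integral and then use that $\mathrm{TR}_x(A)=\mathrm{TR}_x(\mathrm{Op}(\sigma_{U_i\cap U_j}(A)))$ for $x\in U_i\cap U_j$ (locality of the density), together with the vanishing of $\chi_i\chi_j$ for non-overlapping pairs; this yields the stated formula without any claim about the pointwise density of $\chi_iA\chi_j$. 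Second, for the $\underset{\rm diag}{\sim}$-invariance of $\mathrm{TR}$, you route through the global trace ($\mathrm{Tr}(A-B)=0$, then the trace-class identification). The paper's observation is cleaner and pointwise: since $A-B$ has smooth kernel vanishing near the diagonal, its symbol in any chart is a smoothing symbol whose ordinary $\xi$-integral equals the vanishing diagonal value of the kernel, so $\mathrm{TR}_x(A-B)\equiv 0$ directly and no appeal to linearity of $\mathrm{TR}$ plus the trace-class case is needed. Neither point is an error in the result, only in the economy of the argument.
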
 
\begin{proof}We refer  to \cite{W, KV} for the existence of Res and TR.  The  pointwise residue ${\rm Res}_x$   vanishes on operators  with smooth kernels and the pointwise canonical trace ${\rm TR}_x$ vanishes on operators with smooth kernel supported outside the diagonal.  Consequently, only $A_0:= \sum_{\supp \chi_i\cap\supp\chi_j\neq \emptyset}\chi_i A\chi_j$ arises in \eqref{eq:Resint} and \eqref{eq:TRint}.
\end{proof} 
\subsubsection{Characterisation of local linear forms on operators}
 As before, $ \pi:E\to M$ denotes a rank $k$-complex vector bundle over a closed $n$-dimensional manifold $M$.
\begin{defn} 
\label{def:sigmaclass}
We call a  {\bf $\Sigma$-class} ($\Sigma$-for symbol) any class $\Sigma(M, E)\in \Psi_{\rm cl}(M, E)$ of classical pseudodifferential operators    such that
\[A\in \Sigma(M, E)\, \wedge\, B\underset{\rm diag}{\sim}A \Longrightarrow B\in \Sigma(M, E),\] so that for an operator to  belong to $\Sigma$ is a condition on its symbol. Let $\Sigma_{\rm symb}\subset{\rm CS}(\R^n, {\rm gl}_k(\C)) $ be the corresponding class of symbols so that \[A\in \Sigma(M, E)\Longleftrightarrow A\sim_{\rm diag}  \Op(\sigma (A))\;\; \wedge \;\;\sigma(A)\in \Sigma_{\rm symb}. \]
\end{defn}
\begin{ex}
The algebra	$\Psi_{\rm cl}^{\Z}(M, E)$ of integer order classical operators and the set $\Psi_{\rm cl}^{\notin\Z}(M, E)$ of noninteger order classical operators form  $\Sigma$-classes.
\end{ex}
We define a class of linear forms (by linear form on a $\Sigma(M, E)$, we mean that $\Lambda(\alpha A+B)=\alpha \Lambda(A)+ \Lambda(B)$ for any scalar $\alpha$ and any operators $A$ and $B$ in $\Sigma$ such that $\alpha A+B$  also lies in $\Sigma$) on a  $\Sigma$-class $\Sigma(M, E)$, which only detect the symbol of the operator.  For this purpose, it is useful to introduce the corresponding class of  scalar valued symbols 
\[\Sigma_{\rm tr, symb}:=\{x\longmapsto {\rm tr}_x\circ \sigma(x,\cdot), \; \sigma\in \Sigma_{\rm symb}\}\subset {\rm CS}(\R^n, \C),\]
where ${\rm tr}_x$ is the fibrewise trace on the group   $\End(E_x)$ of endomorphisms of the fibre $E_x$ over the point $x$.
\begin{defn}\label{defn:locallambda}We  call {\bf local} any linear form $\Lambda\colon\Sigma(M, E)\to \C$ on a  $\Sigma$-class of classical pseudodifferential operators $\Sigma(M, E)$  which 
	\begin{itemize}
		\item is constant on $\underset{\rm diag}{\sim}$-equivalence classes:
		\[A\underset{\rm diag}{\sim} B\Longrightarrow \Lambda(A)=\Lambda(B),\] 
		so that \begin{equation}
		\label{eq:localLambda}
	\Lambda(A)=\Lambda( {\rm Op}(\sigma(A))),
		\end{equation}
		\item and such that
		\[\Lambda({\rm Op}(\sigma))=\int_M \lambda\circ {\rm tr}(\sigma(x,\cdot))\, dx\]  for some linear form $\lambda:\Sigma_{\rm tr, symb}\to \C$, so that
		
		\begin{equation}\label{eq:Lambda} \Lambda(A)=\int_M \Lambda_x(A)\, dx:=\int_M \lambda\circ {\rm tr}\left(\sigma(A)(x, \cdot)\right)\, dx.\end{equation}
	\end{itemize}
\end{defn}
\begin{rk} The adjective \lq\lq local\rq\rq{} reflects the fact that  the linear form $\Lambda$ is the integral of a density\[\omega_\Lambda(A)(x):=\lambda\circ {\rm tr}\left(\sigma(A)(x, \cdot)\right)\, dx\] over $M$.
	\end{rk}
\begin{ex}
The Wodzicki residue on integer order classical operators (resp. the canonical trace on non-integer order classical operators)  are local.
\end{ex}
 In the remaining part of this paragraph, $\Sigma(M, E)\subset \Psi_{\rm cl}(M,E)$  is a  $\Sigma$-class of operators which is {\bf closed} for the Fr\'echet topology of classical pseudodifferential operators of fixed order (see e.g. \cite[p.117]{Pa} and references therein).\begin{ex}  The  $\Sigma$-classes  $\Psi_{\rm cl}^\Z(M, E)$ of integer order classical pseudodifferential operators and  $\Psi_{\rm cl}^{\notin\Z}(M, E)$ of noninteger order classical pseudodifferential operators, both determined by conditions on their order, are closed for the Fr\'echet topology of classical pseudodifferential operators of fixed order.
 \end{ex}
 \begin{lem}Given a    $\Sigma$-class   $ \Sigma(M, E)\subset \Psi_{\rm cl}(M, E)$,
 	\begin{itemize}
 		\item the corresponding symbol class $\Sigma_{\rm symb}$ is invariant under scaling $\xi\longmapsto t\, \xi$ for $0<t<1$   and ${\rm O}_n(\R)$;
\item Given a   {\rm local and continuous} linear form  $\Lambda\colon\Sigma(M, E)\to \C$,  the associated linear form	$\lambda\colon\Sigma_{\rm tr, symb}\to \C$ on the corresponding class of scalar valued symbols  is continuous,  behaves covariantly under rescaling $\xi\longmapsto t\, \xi$ for any $0<t<1$  and ${\rm O}_n(\R)$-invariant. 
\end{itemize}
 \end{lem}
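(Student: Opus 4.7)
The plan is to derive both claims from the intrinsic (diffeomorphism-invariant) nature of $\Psi_{\rm cl}(M,E)$ together with the defining property of a $\Sigma$-class, by realising the transformations $\xi \mapsto t\xi$ for $t\in(0,1)$ and $\xi \mapsto R\xi$ for $R\in O_n(\R)$ as cotangent actions of suitable local diffeomorphisms of $\R^n$. Specifically, the dilation $\phi_t(x):=x/t$ and the orthogonal map $\phi_R(x):=Rx$ have cotangent actions $(D\phi_t^{T})^{-1} = t\,I$ and $(D\phi_R^{T})^{-1} = R$ respectively.

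For the first bullet, given $\sigma \in \Sigma_{\rm symb}$, one writes $\sigma = \sigma(A)$ in a local trivialisation for some $A \in \Sigma(M,E)$. The pull-back $\phi^\sharp A$ is again a classical pseudodifferential operator of the same order on $M$ whose leading symbol is $\sigma(A)(\phi(x), (D\phi^T)^{-1}\xi)$, with lower-order corrections generated by derivatives of $\sigma(A)$. Since $\Psi_{\rm cl}(M,E)$ is intrinsic on $M$ and $\Sigma$ is closed under $\underset{\rm \small diag}{\sim}$ (which absorbs the lower-order corrections), the pulled-back operator remains in $\Sigma(M,E)$, and its symbol, which equals $\sigma(\cdot, t\xi)$ for $\phi=\phi_t$ and $\sigma(\cdot, R\xi)$ for $\phi=\phi_R$ at leading order, thus belongs to $\Sigma_{\rm symb}$, giving the desired stability.

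For the second bullet, the coordinate-invariance of $\Lambda(A) = \int_M \lambda({\rm tr}(\sigma(A)(x, \cdot)))\, dx$ forces the local density $\omega_\Lambda(A)(x)\, dx := \lambda({\rm tr}(\sigma(A)(x, \cdot)))\, dx$ to transform as a density under diffeomorphism pullbacks, i.e.\ $\omega_\Lambda(A)(y)\, dy = \omega_\Lambda(\phi^\sharp A)(x)\, dx$ with $y=\phi(x)$. Applied to $\phi_t$, and using $dy = t^{-n}\, dx$ together with $\sigma(\phi_t^\sharp A)(x, \xi) = \sigma(A)(x/t, t\xi)$ at leading order, this yields
\[
\lambda\bigl({\rm tr}(\sigma(A)(y_0, t\xi))\bigr) = t^{-n}\, \lambda\bigl({\rm tr}(\sigma(A)(y_0, \xi))\bigr)
\]
at any reference point $y_0 \in M$, i.e.\ the covariance $\lambda(\sigma(\cdot, t\xi)) = t^{-n}\lambda(\sigma(\cdot, \xi))$. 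The analogous argument with $\phi_R$, whose Jacobian is one, yields the $O_n(\R)$-invariance $\lambda(\sigma(\cdot, R\xi)) = \lambda(\sigma(\cdot, \xi))$. Continuity of $\lambda$ is obtained by a localisation argument: given $\tau_k \to \tau$ in $\Sigma_{\rm tr,symb}$, construct $A_k \in \Sigma(M,E)$ whose trace-symbols realise $\tau_k$ on the support of a fixed cut-off $\chi$ around a point $x_0 \in M$ and share a fixed profile outside, so that $A_k \to A$ in the Fr\'echet topology of $\Sigma(M,E)$; continuity of $\Lambda$ then gives $\int_M \chi(x)\, \lambda(\tau_k(x, \cdot))\, dx \to \int_M \chi(x)\, \lambda(\tau(x, \cdot))\, dx$, and letting ${\rm supp}\,\chi$ shrink to $\{x_0\}$ yields the pointwise convergence $\lambda(\tau_k) \to \lambda(\tau)$.

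The main obstacle is to execute the localisation argument for continuity rigorously: one must verify that cut-off operators $A_k$ can be chosen inside $\Sigma(M,E)$ with the prescribed trace-symbol (which is automatic for the examples $\Sigma = \Psi_{\rm cl}^\Z(M,E)$ and $\Psi_{\rm cl}^{\notin \Z}(M,E)$ since their defining condition is purely on the order) and that the limit as the cut-off contracts to a point is uniform enough to transfer continuity from $\Lambda$ on operators to $\lambda$ on scalar symbols.
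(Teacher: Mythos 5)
Your approach is essentially the same as the paper's: both derive the covariance of $\lambda$ from the fact that $\omega_\Lambda(A)(x)\,dx := \lambda\bigl(\mathrm{tr}(\sigma(A)(x,\cdot))\bigr)\,dx$ transforms as a density under pullback by local diffeomorphisms $\kappa$, specialised to $\kappa = tR$ with $R\in O_n(\R)$ and $0<t<1$. The paper records this as the single relation $|\det\kappa_*|\,\lambda(\widetilde{\kappa_*\sigma}) = \lambda(\sigma)$ (its \eqref{eq:kappa}), and reads off from it both the invariance of $\Sigma_{\mathrm{symb}}$ and the covariance of $\lambda$; you make the $t^{-n}$ factor explicit and, more helpfully, you treat the invariance of $\Sigma_{\mathrm{symb}}$ as a \emph{prior} consequence of the coordinate-independence of $\Psi_{\mathrm{cl}}(M,E)$ together with $\underset{\rm diag}{\sim}$-closure, rather than extracting it from \eqref{eq:kappa} itself. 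That ordering is actually cleaner: \eqref{eq:kappa} only makes sense once $\widetilde{\kappa_*\sigma}$ is known to lie in $\Sigma_{\mathrm{tr,symb}}$, so the paper's ``it follows from \eqref{eq:kappa} that $\Sigma_{\mathrm{symb}}$ is invariant'' has its logic slightly inverted.

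On continuity, the paper simply asserts that $\lambda$ inherits continuity ``as a result of the continuity of $\Lambda$,'' with no argument. Your localisation sketch (constructing $A_k\in\Sigma(M,E)$ realising $\tau_k$ near a reference point, passing from $\int_M\chi\,\lambda(\tau_k)\,dx$ to a pointwise statement as $\mathrm{supp}\,\chi$ shrinks) is a reasonable blueprint, and you rightly flag it as the delicate step: one must check that the cut-off realisations stay inside $\Sigma(M,E)$ (which, as you note, is automatic when the $\Sigma$-condition is purely on the order, as for $\Psi_{\rm cl}^{\Z}$ and $\Psi_{\rm cl}^{\notin\Z}$), and that the shrinking limit commutes with the symbol topology. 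The paper does not address either point. So the content matches, with you supplying slightly more detail and an honest acknowledgment of where the argument, as written in the paper, is thin.

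One small technical caveat, present in both treatments: taking $\kappa(x)=tx$ (or $x/t$) with $t\ne 1$ is not a diffeomorphism of a fixed ball \emph{onto itself}, so the covariance identity \eqref{eq:kappa} must be applied with slightly shrunk domains; this is a routine localisation but should be stated if one wants the argument to be fully airtight.
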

 \begin{proof}
    We first observe that  $\lambda$ is continuous for the Fr\'echet topology on symbols 
    of constant order as a result of the continuity of $\Lambda$.   Let us   deduce further properties of $\lambda$ from the covariance of $\Lambda$. \begin{enumerate}
 			\item  
 			Since $\omega_\Lambda(A)(x)$ defines a density,  for any local diffeomorphism $\kappa\colon U\to U$, $\kappa^*\omega_\Lambda(\kappa^\sharp A_U)= \omega_\Lambda( A_U)$, where  $A_U$ is a localisation of $A$ in that chart.
 			If $A_U$ has symbol $\sigma$, following the notations of \cite{Pa}, let  $\widetilde{\kappa_*\sigma}$ be the symbol of $\kappa^\sharp A_U$. The above covariance property for the form $\omega_\Lambda$ translates to \begin{equation}
 			\label{eq:kappa}\vert {\rm det}\kappa_* \vert\, \lambda\left(\widetilde {\kappa_*\sigma}\right)= \lambda\left( \sigma \right) 
 			\end{equation} for any  local diffeomorphism $\kappa:U\to U$.
 			\item Choosing $U$ to be an open ball and    $\kappa= t\,R$ for any $R\in O_n(\R)$ with   $0<t<1$,  it follows from the invariance property (\ref{eq:kappa}),  that  the symbol  class $\Sigma_{\rm symb} $ is invariant and the linear form $\lambda$ behaves covariantly under  rescaling $\xi\longmapsto t\, \xi$ for any $0<t<1$ as well as under isometric transformations.\qedhere
 			\end{enumerate}
 \end{proof}
In the following, continuity of local linear forms is defined with respect to the Fr\'echet topology of classical pseudodifferential operators of fixed order (see e.g. \cite[p.117]{Pa} and references therein.
\begin{thm}
\label{thm:uniqueness}
Any local continuous linear  form  on $\Psi_{\rm cl}^\Z(M, E)$   (resp.  $\Psi_{\rm cl}^{ \notin \Z}(M, E)$) is proportional to the Wodzicki residue ${\rm Res}$ (resp. the canonical trace ${\rm TR}$). 
\end{thm}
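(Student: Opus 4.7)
The plan is to reduce Theorem \ref{thm:uniqueness} to a characterization of continuous linear forms $\lambda$ on scalar classical symbols on $\R^n$, then exploit scaling covariance and $O_n(\R)$-invariance to pin down $\lambda$ up to a multiplicative constant. By (\ref{eq:Lambda}), a local linear form $\Lambda$ is determined by its associated scalar linear form $\lambda$. The lemma preceding the theorem shows that $\lambda$ is continuous in the Fréchet topology, $O_n(\R)$-invariant, and transforms homogeneously of degree $-n$ under dilations $\xi\mapsto t\xi$ for $0<t<1$, as a consequence of the density transformation law (\ref{eq:kappa}) applied to $\kappa(x)=tx$.

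The crucial step is applying this identity to the asymptotic expansion of a classical scalar symbol $\sigma$ of order $m$, writing $\sigma = \sum_{j=0}^{N-1}\chi\,\sigma_{m-j} + r_N$ with $\chi$ an excision function vanishing near the origin and $\sigma_{m-j}$ positively homogeneous of degree $m-j$ outside a neighborhood of zero. The homogeneity $\sigma_{m-j}(t\xi) = t^{m-j}\sigma_{m-j}(\xi)$ combined with the covariance of $\lambda$ forces $(t^{m-j+n}-1)\,\lambda(\chi\,\sigma_{m-j})=0$ for every $0<t<1$, so $\lambda(\chi\,\sigma_{m-j})=0$ whenever $m-j\ne -n$. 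Fréchet continuity of $\lambda$ on symbols of fixed order justifies sending $N\to\infty$, so that $\lambda(\sigma)$ is determined entirely by the degree $-n$ component (if any) and by the Schwartz-type remainder.

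For the integer order case, $\sigma$ admits a degree $-n$ component $\sigma_{-n}$; the $O_n$-invariance of $\lambda$ forces $\lambda(\chi\,\sigma_{-n})$ to factor through the spherical average of $\sigma_{-n}|_{S^{n-1}}$, yielding $\lambda(\chi\,\sigma_{-n})=c\int_{S^{n-1}}\sigma_{-n}(\omega)\,d\omega$ for some constant $c$. The same scaling/$O_n$ analysis applied to Schwartz-type remainders forces them into the kernel of $\lambda$, matching the vanishing of ${\rm Res}$ on trace class operators. Integrating the pointwise identity over $M$ yields $\Lambda = c\cdot{\rm Res}$. In the non-integer order case, no index $m-j$ equals $-n$, so $\lambda$ vanishes on each $\chi\,\sigma_{m-j}$, and it is determined by its action on the Schwartz remainder; on such symbols, the covariance under dilations and rotations matches that of the ordinary Lebesgue integral, and a classical rigidity result (as in \cite{Pa}) identifies $\lambda$ as a multiple of the cut-off integral $\cutoffint_{\R^n}$, yielding $\Lambda = c\cdot{\rm TR}$.

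The main obstacle is the careful handling of the asymptotic expansion: both $\sigma_{m-j}$ and the remainder $r_N$ depend on the choice of excision function $\chi$ near $\xi=0$, and the scaling identity does not exactly preserve this excision since $\chi(t\xi)\ne\chi(\xi)$. One must verify that the error terms so introduced vanish in the limit $N\to\infty$ and do not obstruct the identifications above. This relies crucially on the continuity of $\lambda$ on Fréchet spaces of classical symbols of fixed order, and on the classical rigidity results for scaling-covariant $O_n$-invariant linear forms on scalar symbols.
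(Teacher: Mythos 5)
Your overall strategy matches the paper's: reduce to the induced linear form $\lambda$ on scalar classical symbols, exploit scaling covariance and ${\rm O}_n(\R)$-invariance, and invoke rigidity. The paper, however, proves the theorem by directly citing the characterisation results of \cite{Pa} (Lemma 3.40, Lemma 3.42, Theorem 3.43 for the non-integer case, Theorem 4.21 for the integer case), merely observing that the covariance hypotheses there can be relaxed to invariance under rescaling $\xi \mapsto t\xi$ with $0<t<1$ together with ${\rm O}_n(\R)$-invariance. You instead attempt to re-derive those rigidity results from scratch, and this is where a genuine gap appears.

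The central claim in your sketch, that the homogeneity $\sigma_{m-j}(t\xi)=t^{m-j}\sigma_{m-j}(\xi)$ combined with covariance yields $(t^{m-j+n}-1)\lambda(\chi\sigma_{m-j})=0$ and hence $\lambda(\chi\sigma_{m-j})=0$ for $m-j\neq -n$, is false. Writing out the covariance carefully gives
$t^{\,n+m-j}\,\lambda\bigl(\chi(t\cdot)\,\sigma_{m-j}\bigr)=\lambda\bigl(\chi\,\sigma_{m-j}\bigr)$,
and $\chi(t\cdot)\sigma_{m-j}$ differs from $\chi\sigma_{m-j}$ by a compactly supported (Schwartz) correction that $\lambda$ does not annihilate a priori. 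Concretely, if $\lambda$ restricted to Schwartz functions equals $c\int_{\R^n}$ (which is consistent with scaling covariance and ${\rm O}_n(\R)$-invariance and is in fact forced by Lemma 3.40 in \cite{Pa}), a direct computation in polar coordinates yields
$\lambda(\chi\sigma_{m})=-\frac{c}{\,m+n\,}\int_{S^{n-1}}\sigma_{m}(\omega)\,d\omega$ for $m\neq -n$,
which is nonzero in general; this is exactly the cut-off integral $c\cdot\cutoffint_{\R^n}\chi\sigma_m$, not zero. So the identity you assert would contradict the fact that the cut-off integral itself is a scaling-covariant, ${\rm O}_n(\R)$-invariant continuous functional. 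The related claim that scaling/${\rm O}_n$ analysis "forces [the Schwartz-type remainders] into the kernel of $\lambda$" in the integer case is likewise unjustified as stated: on Schwartz functions alone, scaling covariance and ${\rm O}_n$-invariance give proportionality to the Lebesgue integral, not vanishing. What actually forces the vanishing of $\lambda$ on Schwartz functions in the integer case is the consistency constraint coming from degree $-n$ homogeneous components: there the covariance becomes $\lambda((\chi(t\cdot)-\chi)\sigma_{-n})=0$, i.e.\ $c\,\log t\int_{S^{n-1}}\sigma_{-n}=0$ for all $t$, which forces $c=0$. You acknowledge the excision-function discrepancy as an "obstacle" in your final paragraph, but the resolution is not that the error terms vanish; they don't. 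They couple the value of $\lambda$ on Schwartz functions to its values on homogeneous pieces, and the argument only closes after this coupling is exploited. Either carry out that coupled analysis, or do what the paper does and cite \cite[Theorem 3.43, Theorem 4.21]{Pa} directly.
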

\begin{proof} The proof relies on results of \cite{Pa}, the basic ideas being that rescaling and isometric invariant linear forms   i) on homogeneous functions are proportional to the residue (see Lemma \cite[3.42]{Pa})  and ii) on Schwartz functions are proportional to the ordinary integral (see  Lemma \cite[3.40]{Pa}).  Let $k$ be the rank of $E$ as a   complex bundle over $M$. 
\begin{enumerate}
\item   We first  characterise $\Lambda $ on $\Psi_{\rm cl}^{\notin\Z}(M,E)$.
 Theorem 3.43 in \cite{Pa} characterises continuous rescaling and ${\rm O}_n(\R)$-invariant linear forms  on $CS^{\notin \Z}(\R^n)$ which turn out to be proportional to the canonical integral.  Note that the proof of  \cite[Lemma 3.42]{Pa} on which  in \cite[Theorem 3.43]{Pa} relies, only requires an invariance under rescaling by $0<t<1$.
So there is a constant $C$ such that $\lambda=C\, \cutoffint_{\R^n}$ on $CS^{\notin \Z}(\R^n)$. It follows that on $\Psi_{\rm cl}^{\notin \Z}(M,M\times\C)$, the linear form $\Lambda$ is proportional to the canonical trace TR. Composing with the trace on matrices yields the expected characterisation on $\Psi_{\rm cl}^{\notin\Z}(M,E)$.
\item Let us  now characterise $\Lambda $ on $\Psi_{\rm cl}^{\Z}(M,E)$.
The covariance assumption of Theorem 4.21  in \cite{Pa}-- which says that any  covariant continuous  linear form on $CS^\Z (\R^n)$ is proportional to the residue, the proof of which relies on Theorem 3.43-- can easily be relaxed to  an invariance under rescaling and isometric transformations. Thus,  any continuous linear form on $CS^\Z(\R^n)$ which is invariant under rescaling and isometric transformations is proportional to the residue ${\rm res}$. It follows that
$\Lambda$ on  $\Psi_{\rm cl}^\Z(M, M\times\C)$  is proportional to the Wodzicki residue ${\rm Res}$. Composing with the trace on matrices yields the expected characterisation on $
\Psi_{\rm cl}^\Z(M, E)$. \qedhere
\end{enumerate}\end{proof}
We use this locality   in an essential way to lift trace defect formulae. Here is an immediate corollary which uses the known traciality  of  the Wodzicki residue and the canonical trace on the appropriate classes of operators they are defined on.
\begin{cor} Any local continuous linear form on    the class $\Psi^\Z(M, E)$ of integer order classical pseudodifferential operators, resp. on the class $\Psi^{\notin \Z}(M, E)$, is a trace, resp. vanishes on brackets.
\end{cor}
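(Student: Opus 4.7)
The corollary is an immediate consequence of Theorem \ref{thm:uniqueness} combined with classical results about the traciality of the Wodzicki residue and the canonical trace. My plan is to invoke the characterisation theorem to reduce the problem to the known tracial behaviour of Res and TR, thereby avoiding any direct computation on the linear form $\Lambda$ itself.

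For the integer order case, I would first apply Theorem \ref{thm:uniqueness} to obtain a constant $C$ such that $\Lambda = C\cdot {\rm Res}$ on $\Psi_{\rm cl}^{\Z}(M,E)$. The statement then reduces to Wodzicki's classical result that the residue is a trace on the algebra $\Psi_{\rm cl}^{\Z}(M,E)$, i.e. that ${\rm Res}([A,B]) = 0$ for any $A,B \in \Psi_{\rm cl}^{\Z}(M,E)$. Since $\Psi_{\rm cl}^{\Z}(M,E)$ is closed under composition (unlike the non-integer class), the bracket is automatically in the class, and traciality of $\Lambda$ follows directly by linearity.

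For the non-integer order case, Theorem \ref{thm:uniqueness} gives $\Lambda = C\cdot {\rm TR}$ on $\Psi_{\rm cl}^{\notin \Z}(M,E)$. The relevant classical fact (due to Kontsevich--Vishik and Lesch, see \cite{KV, Le}) is that ${\rm TR}([A,B]) = 0$ whenever $A, B$ are classical pseudodifferential operators such that the commutator $[A,B]$ is of non-integer order. Thus $\Lambda$ vanishes on every bracket that happens to lie in $\Psi_{\rm cl}^{\notin \Z}(M,E)$; this is the precise meaning of \emph{vanishing on brackets} in the setting where the underlying class is not a Lie algebra.

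There is no real obstacle here, the argument being a one-line deduction from Theorem \ref{thm:uniqueness} together with the known tracial properties of Res and TR. The only care needed is in stating the non-integer case correctly, namely restricting attention to those brackets which remain in the class of non-integer order classical operators, since $\Psi_{\rm cl}^{\notin \Z}(M,E)$ is not stable under composition.
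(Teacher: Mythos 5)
Your proposal is correct and matches exactly what the paper intends: the paper states the corollary without a written proof, explicitly describing it as ``an immediate corollary which uses the known traciality of the Wodzicki residue and the canonical trace on the appropriate classes of operators they are defined on,'' which is precisely your two-step reduction via Theorem \ref{thm:uniqueness}. Your extra remark on the correct interpretation of ``vanishes on brackets'' for the non-algebra $\Psi_{\rm cl}^{\notin\Z}(M,E)$ is also the right reading and a useful clarification.
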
 
\subsection{Trace defect formulae on closed manifolds  and applications}

\subsubsection{Trace defect formulae}
We recall (without proof) useful trace defect formulae for the canonical trace of holomorphic families of classical pseudodifferential operators \cite{KV,PS}.
\begin{prop}\label{thm:KVPS}  For any holomorphic family  $ A(z)\in \Psi_{\rm cl}(M,E)$ of classical operators parametrised by $\C$ with holomorphic order $-qz+a$ for some positive $q$ and some real number $a$,
   \begin{enumerate}
 \item the map    $z\mapsto {\rm TR} \left(A(z)\right)  $  is  meromorphic with simple poles $d_j:=\frac{a+n-j}{q}$, $ j\in \Z_{\geq 0}$,
 \item $\:$\cite{KV} the complex  residue at the point $d_j$   is given by  
 \begin{equation}\label{eq:classicalKV}{\rm Res}_{z=d_j} {\rm TR} \left(A(z)\right)= \frac{1}{q}{ \rm
 Res}  (A(d_j) ).
 \end{equation} 
 \item $\:$\cite{PS}  If $A(d_j)$ has a well-defined canonical trace ${\rm TR}\left(A  (d_j)\right)$, then $A^\prime (d_j)$ has a well defined 
Wodzicki residue  ${ \rm
  Res} (A^\prime(d_j) ) $     and  the constant term in the meromorphic expansion of ${\rm TR}\left(A  (z)\right)$ at $d_j$ is
  \begin{equation}\label{eq:PSclassicalop}
{\rm fp}_{z=d_j}{\rm TR}\left(A  (z)\right)={\rm TR}\left(A  (d_j)\right) + 
 \frac{1}{q}{ \rm
 Res} (A^\prime (d_j)).
 \end{equation} 
 In particular, this holds if $A(d_j)$ is a differential operator, in which case ${\rm TR}\left(A  (d_j)\right) =0$ and we have
 \begin{equation}\label{eq:PSclassicalopdiff}
{\rm fp}_{z=d_j}{\rm TR}\left(A  (z)\right)=
 \frac{1}{q}{ \rm
 Res} (A^\prime (d_j)).
 \end{equation} 
 \end{enumerate} 
\end{prop}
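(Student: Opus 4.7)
The plan is to reduce the three statements to a pointwise analysis of the cut-off integral $\cutoffint_{\R^n} \tr(\sigma(A(z))(x,\xi))\, d\xi$ in a local trivialisation, then to recover the global statements by integrating the resulting meromorphic density over $M$. This is legitimate because Lemma \ref{lem:Alocalised} guarantees that the pointwise canonical trace defines a density on $M$, and because the canonical trace is a local linear form in the sense of Definition \ref{defn:locallambda}.

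For parts (1) and (2), fix an excision function $\chi$ and, for $N$ large, decompose
\[
\sigma(A(z))(x,\xi) = \sum_{k=0}^{N-1} \chi(\xi)\, \sigma_{\alpha(z)-k}(A(z))(x,\xi) + r_N(z)(x,\xi),
\]
with $\alpha(z) = -qz + a$. Choosing $N > \Re(\alpha(z_0)) + n$ on a compact neighbourhood of a fixed $z_0 \in \C$ makes $r_N(z)$ absolutely integrable, so $\cutoffint r_N(z)\, d\xi$ is holomorphic there. For each truncated term, passing to polar coordinates would yield
\[
\cutoffint_{\R^n} \chi(\xi)\, \sigma_{\alpha(z)-k}(A(z))(x,\xi)\, d\xi = \frac{-1}{\alpha(z)-k+n} \int_{|\omega|=1} \sigma_{\alpha(z)-k}(A(z))(x,\omega)\, d\omega + H_k(z,x),
\]
with $H_k$ entire in $z$, the singular factor arising from $\cutoffint_0^\infty r^{\alpha(z)-k+n-1}\, dr = -1/(\alpha(z)-k+n)$. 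Since $\alpha(z)-k+n = -q(z-d_k)$, this produces a simple pole at $z = d_k$ with residue $\frac{1}{q}\int_{|\omega|=1} \sigma_{-n}(A(d_k))(x,\omega)\, d\omega$; taking fibrewise trace, dividing by $(2\pi)^n$ and integrating over $M$ then recovers the Kontsevich-Vishik formula of part (2) and simultaneously establishes the meromorphic structure claimed in part (1).

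For part (3), the hypothesis that $\TR(A(d_j))$ is well-defined forces $\int_{|\omega|=1} \sigma_{-n}(A(d_j))(x,\omega)\, d\omega = 0$, which is automatic when $A(d_j)$ is a differential operator. Setting $I_j(z,x) := \int_{|\omega|=1} \sigma_{\alpha(z)-j}(A(z))(x,\omega)\, d\omega$, the $k=j$ contribution near $z=d_j$ reads $\frac{I_j(z,x)}{q(z-d_j)} + H_j(z,x)$; since $I_j(d_j,x) = 0$, the apparent pole cancels and the finite contribution is $\frac{1}{q} I_j'(d_j, x) + H_j(d_j, x)$. Crucially, on the unit sphere the logarithmic factor $-q\log|\xi|$ produced by differentiating $|\xi|^{\alpha(z)-j}$ in $z$ vanishes, so $I_j'(d_j,x) = \int_{|\omega|=1} \partial_z \sigma_{\alpha(z)-j}(A(z))(x,\omega)\big|_{z=d_j}\, d\omega$ picks out precisely the $-n$-homogeneous, $\log^0$ component of the symbol of $A'(d_j)$, i.e.\ the pointwise density of the extended Wodzicki residue of $A'(d_j)$. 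The remaining $k\neq j$ terms and the remainder assemble into $\TR(A(d_j))$, yielding
\[
\fp_{z=d_j}\TR(A(z)) = \TR(A(d_j)) + \frac{1}{q}\Res(A'(d_j)).
\]

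The main obstacle is the careful bookkeeping of the log-polyhomogeneous structure of $A'(d_j)$, since differentiating the classical symbol $\sigma_{\alpha(z)-k}(A(z))$ in $z$ introduces a $-q\log|\xi|$ factor which must be tracked consistently with the extended definition of the Wodzicki residue on log-polyhomogeneous operators. A cleaner, more conceptual alternative is to invoke Theorem \ref{thm:uniqueness}: one checks that the map $B \mapsto \fp_{z=d_j}\TR(A(z)+(z-d_j)B) - \fp_{z=d_j}\TR(A(z))$ defines a continuous local linear form on its natural class of operators, and the uniqueness statement forces it to be proportional to $\Res(B)$; the coefficient $1/q$ is then fixed by a single test family such as $A(z) = (1 + \Delta)^{-z}$ with $\Delta$ a positive Laplacian.
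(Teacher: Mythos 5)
The paper itself does not prove Proposition~\ref{thm:KVPS}; it explicitly states the result is recalled ``without proof'' and cites \cite{KV,PS}, so your argument should be judged against those references rather than against anything in the text. Your main argument is the standard one from Kontsevich--Vishik and Paycha--Scott, and it is essentially sound: the decomposition into truncated homogeneous terms plus an absolutely integrable remainder, the polar-coordinate identity $\cutoffint\chi(\xi)\sigma_m\,d\xi = \bigl(\int_0^1\chi(r)r^{m+n-1}dr - \tfrac{1}{m+n}\bigr)\int_{S^{n-1}}\sigma_m\,d\omega$ for $m+n\neq 0$ producing a simple pole of residue $\tfrac1q\int_{S^{n-1}}\sigma_{-n}(A(d_k))\,d\omega$ at $z=d_k$, the cancellation of the $k=j$ pole when $\int_{S^{n-1}}\sigma_{-n}(A(d_j))\,d\omega=0$, and the identification of $I_j'(d_j,x)$ with the $\log^0$-part of the degree-$(-n)$ component of $\sigma(A'(d_j))$ (because the $-q\log|\xi|$ factor dies on the unit sphere) all check out, as does the bookkeeping showing the $k\neq j$ terms and remainder reassemble into $\TR_x(A(d_j))$, with $H_j(d_j,x)=0$. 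One thing worth making explicit: the extended Wodzicki residue of the log-polyhomogeneous operator $A'(d_j)$ is precisely \emph{defined} here (see the paper's remark after the proposition) as the angular integral of the $\log^0$-piece, which differs from Lesch's graded residue; you are not appealing to a pre-existing notion but identifying the right one.

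The ``cleaner alternative'' via Theorem~\ref{thm:uniqueness} is not correct as written: for a fixed classical $B$, the perturbation $A(z)+(z-d_j)B$ is \emph{not} a holomorphic family of holomorphic order $-qz+a$, so $\fp_{z=d_j}\TR\bigl(A(z)+(z-d_j)B\bigr)$ is not even covered by part (1). This can be repaired — e.g.\ replace $(z-d_j)B$ by $(z-d_j)\,Q^{-(z-d_j)}B$ for a weight $Q$ of order $q$, which does produce a holomorphic family of order $\alpha(z)$ whose derivative at $d_j$ is shifted by $B$ — but one would still have to verify that the resulting map on $\Psi_{\rm cl}^{j-n}(M,E)$ is local in the sense of Definition~\ref{defn:locallambda} and continuous in the Fr\'echet topology before Theorem~\ref{thm:uniqueness} applies, and that the uniqueness statement (formulated on all of $\Psi_{\rm cl}^{\Z}(M,E)$) pins down the form on a single fixed integer order. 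These are nontrivial checks you have not carried out, so the alternative should be regarded as a heuristic, not a second proof; your direct computation is the one that actually closes the argument.
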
  
\begin{rk}
	Actually, the derivatives $A^\prime(d_j)$ are log-polyhomogeneous operators (see \cite{Le} and references therein) and formula  (\ref{eq:PSclassicalopdiff})  yields an extension of the Wodzicki  residue to this particular operator, which differs from Lesch's graded residue.
\end{rk}
\subsubsection{The index as a trace defect}\label{subsec:index}
We specialise to  a Hermitian $\Z_2$-bundle $E=E_+\oplus E_-$ over a closed Riemannian manifold $M$ and let $D=:\left(\begin{array}{cc}0 & D^- \\D^+ & 0\end{array}\right)$ be an essentially selfadjoint elliptic differential operator 
of positive order $d$.  The operator $\Delta:=D_-D_+\oplus D_+D_-=D^2$ is essentially self-adjoint and has a finite dimensional kernel. 
 
Let $\Delta+S$ be an invertible perturbation of $\Delta $ by a smoothing operator $S\in \Psi_{cl}^{-\infty}(M,E)$; typically $S:=\chi_{\{0\}}(\Delta)$, the orthogonal projection onto the kernel of $\Delta$. The operator $\Delta+S$ is then a weight with principal angle $\theta=\pi$.
 Applying the $\mathbb Z_2$-graded version of Proposition \ref{thm:KVPS} to the family  $A(z)= \phi\,  (\Delta+S)^{-z}$ yields the following corollary. 
\begin{cor}\label{cor:KVPS} For any smooth function $\phi$ and any  invertible perturbation $\Delta+S$ as above, the Wodzicki residue
${ \rm
 sRes}(\phi\, \log (\Delta +S) )$ is well-defined and we have
\begin{equation}\label{eq:reslogdeltaR}{\rm fp}_{z=0}{\rm sTR}\left(\phi\, (\Delta+S)^{-z}\right)=- \frac{1}{2d}{ \rm
 sRes}_\theta(\phi\, \log( \Delta+S)).\end{equation}
\end{cor}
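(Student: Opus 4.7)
\smallskip

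\noindent\emph{Proof proposal.} The plan is to apply (the $\Z_2$-graded version of) the trace-defect formula \eqref{eq:PSclassicalopdiff} of Proposition \ref{thm:KVPS} to the holomorphic family
\[A(z):=\phi\,(\Delta+S)^{-z},\qquad z\in\C,\]
and identify the result with the claimed identity. Since $\Delta$ is an essentially self-adjoint elliptic differential operator of order $2d$ and $S\in\Psi^{-\infty}_{\rm cl}(M,E)$ is a smoothing perturbation making $\Delta+S$ invertible, $\Delta+S$ is a weight with spectral cut $R_\pi$, and one has the holomorphic family $(\Delta+S)^{-z}\in\Psi^{-2dz}_{\rm cl}(M,E)$ (as recalled around \eqref{Qz}); multiplication by the smooth function $\phi$ is a differential (hence classical) operator of order $0$, so $A(z)$ is a holomorphic family of classical operators of order $-qz+a$ with $q=2d$ and $a=0$.

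With these parameters, the relevant pole set of Proposition \ref{thm:KVPS} is $\{d_j=(n-j)/(2d)\}$, and the point $z=0$ corresponds to $j=n$. At that pole the value $A(0)=\phi$ is just multiplication by $\phi$, i.e.\ a differential operator of order $0$; its symbol is polynomial (in fact constant) in $\xi$, so the cut-off integral in \eqref{eq:TRint} vanishes pointwise and $\mathrm{sTR}(A(0))=0$. We are therefore in the setting of \eqref{eq:PSclassicalopdiff}, whose $\Z_2$-graded version (obtained by applying \eqref{eq:PSclassicalopdiff} separately to the two diagonal blocks, using that $\phi$ and $(\Delta+S)^{-z}$ are block-diagonal with respect to $E=E_+\oplus E_-$, and taking the difference) reads
\begin{equation*}
{\rm fp}_{z=0}\,\mathrm{sTR}\bigl(\phi\,(\Delta+S)^{-z}\bigr)=\frac{1}{2d}\,\mathrm{sRes}\bigl(A'(0)\bigr).
\end{equation*}
Together with the automatic well-definedness of $\mathrm{sRes}(A'(0))$ guaranteed by the same proposition, this gives the first assertion of the corollary.

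It remains to compute $A'(0)$. Differentiating \eqref{Qz} inside the Cauchy integral (or equivalently differentiating $\lambda_\theta^{-z}$ in the scalar integral formula for the complex power) yields $\tfrac{d}{dz}(\Delta+S)^{-z}=-\log(\Delta+S)\,(\Delta+S)^{-z}$, so that $A'(z)=-\phi\,\log(\Delta+S)\,(\Delta+S)^{-z}$ and hence
\[A'(0)=-\phi\,\log(\Delta+S).\]
Substituting into the displayed formula and using linearity of $\mathrm{sRes}$ gives
\[{\rm fp}_{z=0}\,\mathrm{sTR}\bigl(\phi\,(\Delta+S)^{-z}\bigr)=-\frac{1}{2d}\,\mathrm{sRes}_\theta\bigl(\phi\,\log(\Delta+S)\bigr),\]
which is the desired identity. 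The only real subtlety I anticipate is bookkeeping at the log-polyhomogeneous level: the operator $\phi\,\log(\Delta+S)$ is not classical, and so the symbol $\mathrm{sRes}_\theta$ must be interpreted as the extension of the Wodzicki residue to this particular log-polyhomogeneous operator provided by the trace-defect mechanism itself (as noted in the remark following Proposition \ref{thm:KVPS}); once this convention is adopted the computation is purely formal.
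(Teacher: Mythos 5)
Your proof follows exactly the same route as the paper: apply the trace-defect formula \eqref{eq:PSclassicalopdiff} of Proposition \ref{thm:KVPS} (in its $\Z_2$-graded form) to $A(z)=\phi\,(\Delta+S)^{-z}$ at $z=0$, noting that $A(0)=\phi$ is a differential operator so its canonical trace vanishes, and then compute $A'(0)=-\phi\log(\Delta+S)$ to read off the sign. The paper's proof is a one-liner ("This follows from Proposition \ref{thm:KVPS} applied to the family $A(z)=\phi\,(\Delta+S)^{-z}$ at $z=0$, with the fibrewise trace replaced by the $\Z_2$-graded fibrewise trace"); yours supplies the details (order bookkeeping, block-diagonality justifying the graded version, the derivative computation, the log-polyhomogeneous caveat), all of which are correct and consistent with the remark following Proposition \ref{thm:KVPS}.
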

\begin{proof} This follows from Proposition \ref{thm:KVPS}  applied     to the family  $A(z)= \phi\,  (\Delta+S)^{-z}$ at $z=0$, with the fibrewise trace replaced by the $\Z_2$-graded fibrewise trace. 
\end{proof}
 Applying Corollary \ref{cor:KVPS} to the constant $1$-valued function $\phi$ yields the following formula for the index.
 
\begin{cor}\label{cor:indexD}\cite{Sc}
 \begin{equation}
\label{eq:indres}
{\rm ind} D^+=-\frac{1}{2d}{\rm sRes} ( \log ( \Delta +\chi_{\{0\}})).
\end{equation}
\end{cor}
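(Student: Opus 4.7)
The plan is to apply Corollary \ref{cor:KVPS} with $\phi \equiv 1$ and $S:=\chi_{\{0\}}(\Delta)$, and then identify the left-hand side with $\mathrm{ind}\,D^+$ via a McKean--Singer type argument.

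First, I would invoke Corollary \ref{cor:KVPS} directly to obtain
\[
{\rm fp}_{z=0}\,{\rm sTR}\!\left((\Delta+\chi_{\{0\}})^{-z}\right) \;=\; -\frac{1}{2d}\,{\rm sRes}\bigl(\log(\Delta+\chi_{\{0\}})\bigr),
\]
so that it suffices to show that the left-hand side equals $\mathrm{ind}\,D^+$. The key observation is that the map $z\mapsto{\rm sTR}\!\left((\Delta+\chi_{\{0\}})^{-z}\right)$ is actually holomorphic everywhere on $\C$ and \emph{constant}, equal to the index. Once this is established, the finite part at $z=0$ is simply this constant value.

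To prove constancy, I would argue as follows. For $\Re(z)>n/(2d)$, the operator $(\Delta+\chi_{\{0\}})^{-z}$ is trace class, so ${\rm sTR}$ coincides with the ordinary operator super-trace ${\rm Tr}_s$. With $D=\twobytwo{0}{D^-}{D^+}{0}$, one has $\Delta=\Delta_+\oplus\Delta_-$ with $\Delta_+:=D^-D^+$ and $\Delta_-:=D^+D^-$. Because $D$ is self-adjoint one has $\ker\Delta_\pm=\ker D^\pm$, and the restriction of $\chi_{\{0\}}(\Delta)$ to $\ker D^\pm$ is the identity; hence $(\Delta_\pm+\chi_{\{0\}})^{-z}$ acts as the identity on $\ker D^\pm$. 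On the orthogonal complement of the kernel, $D^+$ intertwines $\Delta_+$ and $\Delta_-$ isomorphically at each positive eigenvalue $\lambda$, producing matching eigenspaces of the same dimension. Consequently the positive spectra of $\Delta_+$ and $\Delta_-$ contribute identical terms to ${\rm Tr}_s\!\left((\Delta+\chi_{\{0\}})^{-z}\right)$ and cancel, leaving
\[
{\rm Tr}_s\!\left((\Delta+\chi_{\{0\}})^{-z}\right)\;=\;\dim\ker D^+-\dim\ker D^-\;=\;{\rm ind}\,D^+,
\]
independent of $z$. By analytic continuation this identity persists on all of $\C$, so in particular ${\rm fp}_{z=0}{\rm sTR}\!\left((\Delta+\chi_{\{0\}})^{-z}\right)={\rm ind}\,D^+$, and combining with the defect formula above yields \eqref{eq:indres}.

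The only delicate point is justifying that the spectral manipulation and the super-trace cancellation of positive eigenvalues, valid a priori only in the trace-class regime $\Re(z)\gg 0$, transfers by analytic continuation to the finite part at $z=0$; this is guaranteed because the ${\rm sTR}$-function is a priori meromorphic by Proposition \ref{thm:KVPS}, and we have just shown it coincides with a constant on a half-plane, so it is that constant globally and has no pole at $z=0$.
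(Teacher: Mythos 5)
Your proof is correct and takes essentially the same route as the paper: both identify ${\rm sTR}\bigl((\Delta+\chi_{\{0\}})^{-z}\bigr)$ as a constant meromorphic function equal to ${\rm ind}\,D^+$ (via the McKean--Singer cancellation of matching nonzero eigenvalues of $D^-D^+$ and $D^+D^-$) and then take the finite part at $z=0$ using Corollary \ref{cor:KVPS}. You simply make explicit the spectral-pairing and analytic-continuation steps that the paper's proof compresses into the remark that the nonzero eigenvalues of $D_-D_+$ and $D_+D_-$ coincide.
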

\begin{proof}Expressing the index of $D_+$ as the supertrace of the projection operator onto the kernel we write
\begin{eqnarray}
{\rm ind} (  D_+)&=&{\rm sTr} (\chi_{\{0\}}(D))\nonumber\\
&=&{\rm sTR} \left((  \Delta+\chi_{\{0\}}(\Delta))^{-z}\right)\nonumber
\end{eqnarray}
since the nonzero eigenvalues of $ \; D_-D_+$ and $ D_+D_-$ coincide.  
This last expression defines a constant meromorphic function, whose finite part at zero therefore coincides with the index:
\begin{eqnarray}
{\rm ind} (  D_+)
&=& {\rm fp}_{z=0} {\rm sTr} \left((  \Delta+\chi_{\{0\}})^{-z}\right)  
\nonumber\\
&& \text{taking the finite part at zero} \nonumber \\
&=&-\frac{1}{2d}{\rm sRes} ( \log (  \Delta+\chi_{\{0\}})) \;\;\text{ using \eqref{eq:reslogdeltaR}}. \nonumber 
\end{eqnarray}
\end{proof}

 \subsection{Trace defect formulae on coverings }\label{sec:Tdef-cov}
Let as before, $M$ be a (connected)   closed Riemannian manifold and $\wt M$ a  regular $\Gamma$-covering.
Let $\pi\colon E\to M$ be a Hermitian vector bundle and $\wt E:=\pi^* E\to \wt M$ its pullback.

 Let $\mathcal A \in \Psi_{ {\rm cl},\Gamma}(\wt M,\wt E)$ be a $\Gamma$-invariant classical pseudodifferential operator.
Let $x\in \wt M$, and  $\sigma(\mathcal A )(x,\cdot)$ denote the symbol in a local trivialisation around $ x$.  

Since $\mathcal A $ is $\Gamma$-invariant, the residue ${\rm Res}_x(\mathcal A )\, dx$ and  the canonical trace ${\rm TR}_x(\mathcal A )\, dx$  densities introduced  in Section \ref{section2} define  $\Gamma$-invariant densities, leading to the following definitions. 
 Let $F\subset X$ be a fundamental domain for the action of $\Gamma$ on $\wt M$.
\begin{defn}
\label{def:gammares}
 To  a $\Gamma$-invariant classical pseudodifferential operator   $\mathcal A \in \Psi_{ {\rm cl},\Gamma}^\Z(\wt M,\wt E)$, resp.  $\mathcal A \in \Psi_{ {\rm cl}, \Gamma}^{\notin \Z_n}(\wt M,\wt E)$  we assign the $\Gamma$-residue, resp. canonical $\Gamma$- trace
\begin{equation}
	\label{eq:covresiduedensity}
 {\rm Res}_\Gamma (\mathcal A ):= \int_{F}{\rm Res}_x (\mathcal A )dx, \quad {\rm resp.}\quad {\rm TR}_\Gamma (\mathcal A ):= \int_{F}{\rm TR}_x (\mathcal A )dx.
\end{equation}
 \end{defn}
  \begin{rk} 
  On operators in $\Psi_{ {\rm cl}, \Gamma} (\wt M,\wt E)$ of order smaller than $-n$,  which by \cite[Satz 4.4]{Va} are $\Gamma$-trace-class, the canonical $\Gamma$-trace coincides with the ordinary $L^2$-trace \begin{equation}\label{eq:Gammaordtrace}{\rm Tr}_\Gamma (\mathcal A ):= \int_{F}{\rm tr}\left(\sigma(\mathcal A )(x,\xi)\right)\, dx\, d\xi=\int_{F}{\rm tr}\left(K_{\mathcal A}(x,x)\right)\,dx,\end{equation}
  where $K_{\mathcal A}$ stands for the Schwartz kernel of ${\mathcal A}$.
  \end{rk} 
\begin{defnprop}
\label{prop:ResTRliftedA}
\begin{enumerate}
	\item A  $\Sigma$-class of operators $\Sigma(M, E)\subset \Psi_{\rm cl}(M,E)$ lifts to the class  of $\Gamma$-invariant operators
	\[\Sigma_\Gamma(\wt M, \wt E):=\{{\mathcal A}\in\Psi_{\Gamma,{\rm cl}}(\wt M,\wt E), \quad  {\mathcal A}\underset{\rm diag}{\sim} \wt A_0\quad \text{for some}\, A_0\in \Sigma(M, E)\},\]
	using the notation of (\ref{eq:abusenotation}).
\item Any local linear form $\Lambda\colon \Sigma(  M,   E) \to \C$ canonically lifts to a  linear form $\Lambda_\Gamma\colon \Sigma_\Gamma(\wt M, \wt E)\to \C$ defined as
\[\Lambda_\Gamma({\mathcal A}):= \Lambda(A)= \int_F \pi^*\left(\lambda\left({\rm tr}_x\sigma(A)(x,\cdot)\right)\, dx\right)\, \quad {\rm if}\,   
{\mathcal A}\in\reallywidetilde{ [A]_{\rm diag}}.\] 
\end{enumerate}
\end{defnprop}
\begin{rk}
	By construction local linear forms are constant on the horizontal lines of the diagramme in  \eqref{eq:diagramme}.
\end{rk}
\begin{proof} On an operator $ \wt A_0$  for some $\e$-local operator $A_0\in \Sigma(M, E)$, the $\Gamma$-invariant linear form is defined as $ \Lambda_\Gamma({\mathcal A}):=\Lambda(A_0)$.
	This determines $\Lambda_\Gamma$ on $\Sigma_\Gamma(\wt M, \wt E)$ since    a local linear form   is  constant on $\sim_{\rm diag}$-equivalence classes and we can set
	\[	\Lambda_{\Gamma}(\wt{[A]_{\rm diag}})=\Lambda([A]_{\rm diag}):=\Lambda(A_0)=\Lambda(A)\, ,\quad\text{for any $\e$-local operator}\, A_0\in [A]_{\rm diag}.\qedhere\]
\end{proof}
This applies the canonical trace and the Wodzicki residue.
\begin{cor} 
 The  residue and the canonical trace densities are well-defined on lifted classes $\reallywidetilde {[A]_{\rm diag}}$ of Definition \ref{defn:classlift} and are preserved under lifts.
  \[{\mathcal A}\in  \reallywidetilde{[ A]_{\rm diag}}\Longrightarrow  {\rm Res}_\Gamma({\mathcal A})= {\rm Res}(A)\quad A\in \Psi_{\rm cl}^\Z(M,E),\]
and \[{\mathcal A}\in  \reallywidetilde{[ A]_{\rm diag}}\Longrightarrow  {\rm TR}_\Gamma({\mathcal A})= {\rm TR}(A)\quad A\in \Psi_{\rm cl}^{\notin \Z}(M,E).\]  
 In other words, the residue and the canonical trace are constant along the horizontal lines of the diagramme (\ref{eq:diagramme}) in the introduction. 
\end{cor}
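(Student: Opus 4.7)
The plan is to deduce the corollary by applying Definition-Proposition \ref{prop:ResTRliftedA} with $\Lambda$ specialised to the Wodzicki residue on $\Psi_{\rm cl}^\Z(M, E)$ and to the canonical trace on $\Psi_{\rm cl}^{\notin\Z}(M, E)$. The point is that both ${\rm Res}$ and ${\rm TR}$ are local linear forms in the sense of Definition \ref{defn:locallambda}: they are given by integrating densities ${\rm Res}_x(A)\, dx$ and ${\rm TR}_x(A)\, dx$ built from $\lambda \circ {\rm tr}(\sigma(A)(x,\cdot))$, with $\lambda$ equal to ${\rm res}$ or to $\cutoffint_{\R^n}$ respectively. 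Once this identification is made, the corollary is a direct consequence of the abstract lifting statement.

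First, I would unpack the definition: given $\mathcal A \in \reallywidetilde{[A]_{\rm diag}}$, Definition \ref{defn:classlift} produces an $\varepsilon$-local $A_0 \in [A]_{\rm diag}$ with $\mathcal A \underset{\rm diag}{\sim} \wt A_0$. By Lemma \ref{prop:ep-local}(2), in a local trivialisation over an evenly covered chart the symbol of the lift is the symbol of $A_0$ pulled back along $\pi$, i.e. $\sigma(\wt A_0)(\tilde x,\xi) = \sigma(A_0)(\pi(\tilde x),\xi)$. This transports verbatim to the pointwise residue and canonical trace densities, giving
\[
{\rm Res}_{\tilde x}(\wt A_0)= {\rm Res}_{\pi(\tilde x)}(A_0),\qquad {\rm TR}_{\tilde x}(\wt A_0)= {\rm TR}_{\pi(\tilde x)}(A_0),
\]
so that the densities are $\Gamma$-invariant and descend to densities on $M$.

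Second, since a fundamental domain $F$ projects diffeomorphically onto $M$ outside a set of measure zero, a change of variable $x=\pi(\tilde x)$ in the defining integrals of Definition \ref{def:gammares} yields
\[
{\rm Res}_\Gamma(\mathcal A)=\int_F {\rm Res}_{\tilde x}(\wt A_0)\, d\tilde x=\int_M {\rm Res}_x(A_0)\, dx={\rm Res}(A_0),
\]
and similarly ${\rm TR}_\Gamma(\mathcal A)={\rm TR}(A_0)$ when the order of $A$ lies outside $\Z$. To close the loop, I invoke the fact that ${\rm Res}$ (respectively ${\rm TR}$) is constant on $\underset{\rm diag}{\sim}$-equivalence classes, because both vanish on smoothing operators supported off the diagonal; applying this to the relation $A \underset{\rm diag}{\sim} A_0$ gives ${\rm Res}(A_0)={\rm Res}(A)$ (respectively ${\rm TR}(A_0)={\rm TR}(A)$), and the two chains of equalities combine to the desired identities.

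The main obstacle I anticipate is making sure the construction is well-defined, that is, independent of the choice of $\varepsilon$-local representative $A_0 \in [A]_{\rm diag}$ and of the fundamental domain $F$. Both points are guaranteed by the symbol-only nature of the two forms: any two such representatives differ by a smoothing operator with kernel supported outside the diagonal, on which both ${\rm Res}_x$ and ${\rm TR}_x$ vanish pointwise, while the $\Gamma$-invariance of the integrand makes the choice of $F$ irrelevant. These are exactly the abstract features encoded in Definition-Proposition \ref{prop:ResTRliftedA}, so once ${\rm Res}$ and ${\rm TR}$ are recognised as instances of local linear forms, the corollary follows without further ingredients.
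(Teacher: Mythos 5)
Your proposal is correct and follows essentially the same route as the paper: the corollary is obtained by applying Definition-Proposition \ref{prop:ResTRliftedA} to the two local linear forms ${\rm Res}$ and ${\rm TR}$, and you correctly identify the supporting ingredients — the symbol identity from Lemma \ref{prop:ep-local}(2), the $\Gamma$-invariance of the pointwise densities, the change of variables from $F$ to $M$, and constancy of ${\rm Res}$ and ${\rm TR}$ on $\underset{\rm diag}{\sim}$-equivalence classes. You simply unpack the steps that the paper leaves implicit in the phrase ``This applies the canonical trace and the Wodzicki residue.''
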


Consequently,   trace-defect formulae on closed manifolds recalled in Proposition \ref{thm:KVPS}  induce trace-defect formula on coverings.

\begin{thm}
\label{thm:KVPScov}
   For any holomorphic family  $ \mathcal A(z)\in \Psi_{{\rm cl},\Gamma}(\wt M, \wt E)$ of classical operators parametrised by $   \C$ with holomorphic order $   -qz+a$ for some positive $q$ and some real number $a$,
   \begin{enumerate}
 \item the map    $z\mapsto {\rm TR}_\Gamma \left(\mathcal A (z)\right)  $  is  meromorphic with simple poles $d_k:=\frac{a+n-k}{q}$, $ k\in \Z_{\geq 0}$,
 \item  the complex  residue at the point $d_k$   is given by
 \begin{equation}\label{eq:classicalKVnc}{\rm Res}_{z=d_k} {\rm TR}_\Gamma \left(\mathcal A(z)\right)= \frac{1}{q}{ \rm
 Res}_\Gamma (\mathcal A (d_k) );
 \end{equation} 
 \item   If $\mathcal A(d_k)$ has a well-defined $\Gamma$-canonical trace ${\rm TR}_\Gamma(\mathcal A(d_k))$, then $\mathcal A^\prime (d_k)$ has a well defined $\Gamma$-
Wodzicki residue  ${ \rm
  Res}_\Gamma (\mathcal A^\prime(d_k) ) $     and  the constant term in the meromorphic expansion of ${\rm TR}_\Gamma\left(\mathcal A  (z)\right)$ at $d_k$ is
  \begin{equation}\label{eq:PSclassicalop-bis}
{\rm fp}_{z=d_k}{\rm TR}_\Gamma\left(\mathcal A  (z)\right)=  {\rm TR}_\Gamma(\mathcal A(d_k))+
 \frac{1}{q}{ \rm
 Res}_\Gamma (\mathcal A^\prime (d_j));
 \end{equation} 
\item If $\mathcal A(d_k)$ is a differential operator, this reduces to 
 \begin{equation}\label{eq:PSclassicalopdiff-bis}
{\rm fp}_{z=d_k}{\rm TR}_\Gamma\left(\mathcal A  (z)\right)=  
 \frac{1}{q}{ \rm
 Res}_\Gamma (\mathcal A^\prime (d_j));
 \end{equation} 
 \item All the above statements actually hold for any other representative in  $[\mathcal A(z)]_{\rm diag}$.
 \end{enumerate} 
\end{thm}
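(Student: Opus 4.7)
The plan is to deduce the entire statement from the closed-manifold trace-defect formula (Proposition \ref{thm:KVPS}) by lifting $\mathcal A(z)$ down to $M$ via Proposition \ref{prop:ShubinTh1} and then exploiting the locality of ${\rm TR}_\Gamma$ and ${\rm Res}_\Gamma$ to read the resulting identities upstairs. Fix once and for all a $\Gamma$-invariant ``good'' partition of unity $\{\wt\chi_j\}$ with balls of diameter less than some $\ve<r_0/2$, as in \eqref{eq:tildechi}. Running the partition-of-unity construction in the proof of Proposition \ref{prop:ShubinTh1} with $\mathcal A(z)$ in place of $\mathcal A$ produces, for each $z$, an $\ve$-local operator
\[A(z):=\sum_{\supp\wt\chi_i\cap\supp\wt\chi_j\neq\emptyset}\pi_\sharp\bigl(\wt\chi_i\,\mathcal A(z)\,\wt\chi_j\bigr)\in\Psi_{\rm cl}(M,E)\]
of order $-qz+a$ satisfying $\mathcal A(z)\underset{\rm diag}{\sim}\wt{A(z)}$. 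Because $\{\wt\chi_j\}$ and $\pi_\sharp$ are independent of $z$ and the sum is locally finite, $A(z)$ is a holomorphic family in the sense of Definition \ref{defn:holfamilies}, its symbol in any chart being the $\pi$-pushforward of the holomorphic symbol of $\mathcal A(z)$.

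Next I would invoke Definition-Proposition \ref{prop:ResTRliftedA}, which asserts that ${\rm Res}_\Gamma$ and ${\rm TR}_\Gamma$ are the lifts of the local linear forms ${\rm Res}$ and ${\rm TR}$, hence constant along the horizontal lines of diagram \eqref{eq:diagramme}. Combined with $\mathcal A(z)\underset{\rm diag}{\sim}\wt{A(z)}$ this yields, whenever the corresponding densities are defined,
\[{\rm TR}_\Gamma(\mathcal A(z))={\rm TR}(A(z)),\qquad {\rm Res}_\Gamma(\mathcal A(z))={\rm Res}(A(z)).\]
Items (1) and (2) then follow directly from Proposition \ref{thm:KVPS}(1)--(2) applied to $A(z)$ on $M$, with the same pole locations $d_k=(a+n-k)/q$. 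For item (3), the hypothesis that $\mathcal A(d_k)$ carries a $\Gamma$-canonical trace is a condition on the symbol alone (it amounts to the vanishing of the obstructing residue-type expression at order $d_k$), so by \eqref{eq:liftedsymbol} it transfers to $A(d_k)$ downstairs; applying Proposition \ref{thm:KVPS}(3) to $A(z)$ and rewriting both sides via the two identities above gives \eqref{eq:PSclassicalop-bis}. Item (4) is the specialisation to a differential $\mathcal A(d_k)$, for which $A(d_k)$ is also a differential operator and hence ${\rm TR}(A(d_k))=0$. Finally, item (5) is a direct consequence of the locality just exploited: two representatives of $[\mathcal A(z)]_{\rm diag}$ produce, via the same construction, $\underset{\rm diag}{\sim}$-equivalent downstairs families, and every quantity appearing in items (1)--(4) depends only on the $\underset{\rm diag}{\sim}$-class.

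The one point requiring genuine care will be the verification that $z\mapsto A(z)$ is a bona fide holomorphic family of classical symbols of order $-qz+a$ on $M$ in the sense of Definition \ref{defn:holfamilies}. This reduces to observing that the downstairs symbol of $A(z)$ on any chart $U\subset M$ is obtained by reading the symbol of $\mathcal A(z)$ in a lifted chart $\wt U\subset\wt M$, so that holomorphy in $z$, the asymptotic expansion into positively homogeneous components, and the uniform remainder estimates of Definition \ref{defn:holfamilies}(2) are inherited verbatim from the holomorphic family $\mathcal A(z)$. Once this bookkeeping is settled, no further analytic input is needed beyond Propositions \ref{prop:ShubinTh1}, \ref{prop:ResTRliftedA} and \ref{thm:KVPS}, and the theorem follows by transporting each conclusion of Proposition \ref{thm:KVPS} back upstairs along the two identities above.
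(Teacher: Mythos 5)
Your proposal follows essentially the same route as the paper's own proof: lift down via Proposition \ref{prop:ShubinTh1}, invoke the locality of ${\rm TR}_\Gamma$ and ${\rm Res}_\Gamma$ from Proposition \ref{prop:ResTRliftedA}, and transport the conclusions of Proposition \ref{thm:KVPS} back upstairs. The only substantive addition is your explicit flagging of the need to check that $z\mapsto A(z)$ is a holomorphic family downstairs, a point the paper leaves implicit, and your sketch of why that holds is sound.
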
 
\begin{proof} We use Proposition \ref{prop:ShubinTh1} to write $\mathcal A (z)\underset{ \rm \small diag}{\sim}\reallywidetilde {A(z)}$ with $A(z)$ $\ve$-local for some positive $\ve$. On the grounds of Proposition \ref{prop:ResTRliftedA}, without loss of generality  we can show the statements for $\mathcal A(z)=\reallywidetilde {A(z)}$, and we have
\begin{equation}
\label{eq:TRGamma}
 {\rm TR}_\Gamma\left( \widetilde{A(z)}\right)={\rm TR} \left(  A(z) \right). 
 \end{equation}
 (1) then follows from the meromorphicity and the structure of the poles  $\{d_k, k\in \Z_{\geq 0}\}$ of ${\rm TR}  (A(z))$ discussed in Proposition \ref{thm:KVPS}.
Similarly, (\ref{eq:classicalKVnc})  follows from (\ref{eq:classicalKV}) combined with (see Proposition \ref{thm:KVPS})  
\begin{equation}
\label{eq:ResGamma}
{\rm Res}_\Gamma\left( \widetilde{A(d_j)}\right)={\rm Res} \left(  A(d_j) \right). 
\end{equation}
To prove (3) we assume that  $A(d_k)$ has  a well-defined canonical  trace, in which case  $\widetilde{A(d_k)}$ has a well-defined $\Gamma$-canonical trace. We apply  (\ref{eq:PSclassicalop-bis}) to  the family
  $ A(z)$, which  combined with (\ref{eq:TRGamma}) yields
\begin{eqnarray*}
 {\rm fp}_{z=d_k} {\rm TR}_\Gamma(\widetilde{A(z)})&=& {\rm fp}_{z=d_k}{\rm TR} \left(  A (z) \right)\\
 &=&{\rm TR} \left(  A(d_k) \right)+\frac{1}{q}   {\rm Res} \left(  A^\prime(d_k) \right)\\
 &=&   {\rm TR} \left( \reallywidetilde{ A(d_k)} \right)+\frac{1}{q}   {\rm Res} \left(  A^\prime(d_k) \right).
 \end{eqnarray*} 
 Since the operator $  A^\prime(d_k)$ has a well-defined residue, the lifted derivative $\reallywidetilde{A^\prime(d_k)} $ has a well-defined $\Gamma$-residue and we have    
$\ds	  {\rm Res}_\Gamma\left(\reallywidetilde{A^\prime(d_k)}\right)={\rm Res} \left(  A^\prime (d_k) \right),$ leading to (\ref{eq:PSclassicalopdiff-bis}). \\
 The fact that these statements depend only on the class $[\mathcal A(z)]_{\rm diag}$ and not on the representative follows from the fact that $\TR$ and ${\rm Res}$ are well defined on such classes.  \qedhere
\end{proof}
\begin{cor}\label{cor:KVPScomparison} Let  $A(z)$ be a holomorphic family of   operators in $\Psi_{\rm cl}(M, E)$. For some  positive number $\ve$,  there is a holomorphic family $A_0(z) $ of $\ve$-local operators  in $\Psi_{\rm cl}(M, E)$ such that $A_0(z)\in [A(z) ]_{\rm diag}$.\\
For any  holomorphic family  $\mathcal  A(z)$  in $\Psi_{{\rm cl},\Gamma}(\wt M, \wt E)$ such that the difference   $\mathcal  A (0)-\reallywidetilde{A_0(0)}$ at zero has a smooth kernel,  the map $z\mapsto {\rm TR}_\Gamma\left(\mathcal  A(z)\right)- {\rm TR} \left(A(z)\right)$ is holomorphic at $0$
with
\begin{equation}\label{eq:RestildeAz}
{\rm fp}_{z=0}{\rm TR}_\Gamma\left(\mathcal  A (z)\right)-{\rm fp}_{z=0}{\rm TR}\left(A(z)\right)= {\rm Tr}_\Gamma(\mathcal  A(0)-\reallywidetilde{A_0(0)}),
\end{equation}
where  as before, ${\rm Tr}_\Gamma $ is defined in \eqref{eq:Gammaordtrace}.
 \end{cor}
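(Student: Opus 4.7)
The plan is to reduce the corollary to the trace-defect formula of Theorem~\ref{thm:KVPScov} through a comparison across the covering, using locality of the canonical trace to identify ${\rm TR}(A(z))$ with ${\rm TR}_\Gamma(\reallywidetilde{A_0(z)})$. First I would build the $\ve$-local family $A_0(z)$ by localising with a $z$-independent partition of unity $\{\chi_i\}$ on $M$ whose supports have diameter less than $\ve/2$, setting
\[
A_0(z)\;:=\;\sum_{\supp\chi_i\cap\supp\chi_j\neq\emptyset}\chi_i\, A(z)\,\chi_j.
\]
Each summand is $\ve$-local, the cut-offs being $z$-independent preserve holomorphicity, and $A(z)-A_0(z)$ has smooth kernel supported off the diagonal, so that $A_0(z)\in [A(z)]_{\rm diag}$. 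Locality of the canonical trace (Theorem~\ref{thm:uniqueness} together with Proposition~\ref{prop:ResTRliftedA}) then yields ${\rm TR}(A(z))={\rm TR}(A_0(z))={\rm TR}_\Gamma(\reallywidetilde{A_0(z)})$, so that, with $R(z):=\mathcal A(z)-\reallywidetilde{A_0(z)}$,
\[
{\rm TR}_\Gamma(\mathcal A(z))-{\rm TR}(A(z))\;=\;{\rm TR}_\Gamma(R(z)).
\]

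Holomorphicity of this difference at $z=0$ follows by applying Theorem~\ref{thm:KVPScov}(2) to $R(z)$: the potential simple pole at $z=0$ carries residue $\tfrac{1}{q}{\rm Res}_\Gamma(R(0))$, which vanishes because by hypothesis $R(0)$ has smooth kernel, so all homogeneous components of its symbol are $\sim 0$ and its $\Gamma$-Wodzicki residue is zero. The singularity is therefore removable and $z\mapsto {\rm TR}_\Gamma(\mathcal A(z))-{\rm TR}(A(z))$ is holomorphic at $z=0$.

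For the value at zero I would appeal to Theorem~\ref{thm:KVPScov}(3): since $R(0)$ is $\Gamma$-trace class by \cite[Satz 4.4]{Va}, the $\Gamma$-canonical trace at $z=0$ is well defined and equals the ordinary $\Gamma$-trace, ${\rm TR}_\Gamma(R(0))={\rm Tr}_\Gamma(R(0))$. The formula \eqref{eq:PSclassicalop-bis} then gives
\[
{\rm fp}_{z=0}{\rm TR}_\Gamma(R(z))\;=\;{\rm Tr}_\Gamma(R(0))+\tfrac{1}{q}{\rm Res}_\Gamma(R'(0)).
\]
The main obstacle is the vanishing of the correction term $\tfrac{1}{q}{\rm Res}_\Gamma(R'(0))$. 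The key observation is that since $\sigma(R(0))\sim 0$, the logarithmic contribution $-q\log|\xi|\cdot\sigma(R(z))|_{z=0}$ appearing in the log-polyhomogeneous symbol of $R'(0)$ is itself $\sim 0$, so $R'(0)$ has classical polyhomogeneous symbol given by the $z$-derivative of the symbol coefficients. Using locality of the $\Gamma$-residue (Proposition~\ref{prop:ResTRliftedA}) together with the Shubin lift (Proposition~\ref{prop:ShubinTh1}), which writes $\mathcal A(z)\underset{\rm diag}{\sim}\reallywidetilde{B(z)}$ for some $\ve$-local $B(z)$ on $M$, the claimed vanishing ${\rm Res}_\Gamma(R'(0))=0$ descends to a statement about the base family $B(z)-A_0(z)$ whose value at $z=0$ is smoothing; combining this with the identity $R(z)={\rm Op}(z\,\tau(z))+S_R(z)$ of the smoothing-remainder decomposition then yields the desired cancellation and hence the formula \eqref{eq:RestildeAz}.
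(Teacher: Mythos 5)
Your reduction to the family $\mathcal B(z):=\mathcal A(z)-\reallywidetilde{A_0(z)}$, the identity ${\rm TR}(A(z))={\rm TR}_\Gamma(\reallywidetilde{A_0(z)})$ via locality of the canonical trace, and the holomorphicity of $z\mapsto{\rm TR}_\Gamma(\mathcal B(z))$ from the vanishing $\Gamma$-residue of the smoothing operator $\mathcal B(0)$ are all exactly the steps of the paper's own proof. Where the two arguments diverge is in evaluating the finite part. The paper simply passes from ${\rm fp}_{z=0}{\rm TR}_\Gamma(\mathcal B(z))$ to $\lim_{z\to 0}{\rm TR}_\Gamma(\mathcal B(z))$ to ${\rm Tr}_\Gamma(\mathcal B(0))$ without comment, even though formula \eqref{eq:PSclassicalop-bis} of Theorem \ref{thm:KVPScov} a priori gives ${\rm fp}_{z=0}{\rm TR}_\Gamma(\mathcal B(z))={\rm TR}_\Gamma(\mathcal B(0))+\tfrac{1}{q}{\rm Res}_\Gamma(\mathcal B'(0))$. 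You are more careful in that you explicitly flag the correction term $\tfrac{1}{q}{\rm Res}_\Gamma(\mathcal B'(0))$ as the main obstacle --- a point the paper glosses over.

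However, the argument you give for its vanishing does not hold up. It is correct that $\sigma(\mathcal B(0))\sim 0$ kills the $\log\vert\xi\vert$-coefficient in $\sigma(\mathcal B'(0))$, so $\mathcal B'(0)$ is classical; but writing $\sigma(\mathcal B(z))\sim z\,\tau(z)$ only gives $\sigma(\mathcal B'(0))\sim\tau(0)$, and nothing in the hypothesis controls the residue of $\tau(0)$. Locality and the Shubin lift reduce the claim to whether $\sigma(\mathcal A'(0))$ and $\reallywidetilde{\sigma(A_0'(0))}$ have the same degree $-n$ part, yet the hypothesis only ties $\mathcal A(z)$ and $\reallywidetilde{A_0(z)}$ together at the single point $z=0$ and says nothing about their $z$-derivatives there. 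Concretely, replacing $\mathcal A(z)$ by $\mathcal A(z)+z\,\mathcal D\,\mathfrak Q^{-z}$ with $\mathcal D\in\Psi_{{\rm cl},\Gamma}(\wt M,\wt E)$ of order $-n$ with ${\rm Res}_\Gamma(\mathcal D)\neq 0$ and $\mathfrak Q$ a weight of order $q$ leaves $\mathcal A(0)$ unchanged (so the hypothesis is undisturbed) but shifts ${\rm fp}_{z=0}{\rm TR}_\Gamma(\mathcal A(z))$ by $\tfrac{1}{q}{\rm Res}_\Gamma(\mathcal D)\neq 0$. The vanishing of the correction term therefore requires a hypothesis on the first-order jet, e.g. $\mathcal A(z)\underset{\rm diag}{\sim}\reallywidetilde{A_0(z)}$ at least to first order in $z$ at $0$; this does hold in all the paper's applications (in Theorem \ref{thm:liftedregtraces} the relevant symbols agree for all $z$), but neither your proposed argument nor the paper's proof derives it from the hypothesis as stated. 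Your ``desired cancellation'' is asserted, not established, and this is precisely the missing step.
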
 
\begin{proof}  
 For some given positive $\ve$,  Definition \ref{defn:classlift} yields a family  of $\ve$-local operators  $A_0(z)$ in $ [A(z)]_{\rm diag}$. The explicit construction of these operators by means of an appropriate partition of unity shows that this family can be chosen holomorphic as a consequence of the holomorphicity of $A(z)$.  It then follows from Proposition \ref{prop:ResTRliftedA} that
	$${\rm TR}_\Gamma\left(\reallywidetilde{A_0(z)}\right)= {\rm TR} \left( A_0(z)\right)={\rm TR} \left( A(z)\right).$$
	   The operators $\mathcal B(z):=  \mathcal  A(z)- \reallywidetilde{A_0(z)}$   define a holomorphic family in  $\Psi_{{\rm cl},\Gamma}(\wt M, \wt E)$. It follows from the above that
	   \begin{equation}
	   	\label{eq:1}	{\rm fp}_{z=0}{\rm TR}_\Gamma\left(\mathcal  A (z)\right)-{\rm fp}_{z=0}{\rm TR}\left(A(z)\right)=	{\rm fp}_{z=0}{\rm TR}_\Gamma\left(\mathcal  A (z)\right)-{\rm fp}_{z=0}{\rm TR}\left(A_0(z)\right)={\rm fp}_{z=0}{\rm TR}_\Gamma\left(\mathcal  B (z)\right).
	   	\end{equation}  Assuming that  the operator  $ \mathcal  A(0)- \reallywidetilde{A_0(0)}$ has a smooth kernel, then the operator $\mathcal B(0) $ has a smooth kernel and hence a vanishing Wodzicki residue. It then follows from  (\ref{eq:classicalKVnc}) that the map $z\mapsto {\rm TR}_\Gamma\left(\mathcal B(z)\right)$  has a vanishing complex residue at zero, showing its holomorphicity at zero. 
The fact that $\mathcal B(0)$ has a smooth kernel also implies that it has  a well-defined canonical trace  which coincides with the ordinary $\Gamma$-trace ${\rm Tr}_\Gamma\left(\mathcal B(0)\right)$  and we have
 \begin{equation}
 \label{eq:2}{\rm fp}_{z=0}{\rm TR}_\Gamma\left(\mathcal  B (z)\right)=\lim_{z\to 0}{\rm TR}_\Gamma\left(\mathcal  B (z)\right)={\rm Tr}_\Gamma(\mathcal B(0))= {\rm Tr}_\Gamma(\mathcal  A(0)- \reallywidetilde{A_0(0)}).\end{equation} Combining equations (\ref{eq:1}) and (\ref{eq:2}) yields (\ref{eq:RestildeAz}).
\end{proof}


\subsection{Lifted spectral $\zeta$-invariants} 
\label{sec:liftedregtr}
This section is devoted to applications of Formula (\ref{eq:RestildeAz}) in Corollary \ref{cor:KVPScomparison}.\\
 Let $E$ be a hermitian vector bundle over a closed Riemannian manifold $M$, and let  $\mathbf D$ be  an essentially self-adjoint  differential operator in $\Psi(M, E)$. Then $\mathbf \Delta:= \mathbf D^2$ is a non-negative essentially self-adjoint operator.\\ 
 For any $\ve>0$, let $Q_\ve(\mathbf \Delta)$ be a weight as defined in \eqref{eq:Qeps} and for a  measurable  function $h$ on  on a contour $\Gamma_\pi$ around the ray $R_\pi=]-\infty, 0]$,  such that 
\begin{equation}
\label{eq:estimate.h}
 \vert h(\lambda)\vert \leq\vert  \lambda\vert^{-\delta} 
\end{equation}
for some positive $\delta$,  let $h(Q_\ve(\mathbf \Delta))$ be defined by \eqref{eq:hQ}. 
  Similarly, we consider $ h(Q_\ve(\wt{\mathbf \Delta}))$ with $Q_\ve(\wt {\mathbf \Delta})$  defined by (\ref{eq:tildeQeps}). \\
In the specific case when $h$ is polynomial, then $h(\mathbf \Delta)$  is a well-defined differential operator, which lifts to $\reallywidetilde{h(\mathbf \Delta)}=h(\wt{ \mathbf \Delta})$.
 
Let $h$  be a measurable  function $h$ on   a contour $\Gamma_\pi$ around the ray $R_\pi=]-\infty, 0]$,  satisfying \eqref{eq:estimate.h}  for some positive $\mathbf \Delta$, we set  $h_\ve(\mathbf \Delta):= h(Q_\ve(\mathbf \Delta))$ and $h_\ve(\wt{ \mathbf \Delta}):= h(Q_\ve(\wt{ \mathbf \Delta}))$. 
\\
	For any $A\in \Psi_{\rm cl}(M, E)$  and any weight $Q\in \Psi_{\rm cl}(M, E)$ , the family $A(z):= A\, Q^{-z}$ is a holomorphic  perturbation of $A$ and the {\bf spectral $\zeta$-function}  (or ${  Q}$-regularised trace of ${  A}$)
	\begin{equation}\label{eq:zetaM} z\longmapsto\zeta_{A,Q}(z):={\rm TR}(A\, Q^{-z})
	\end{equation} defines a meromorphic map  on $\C$  with a known countable set of simple poles.  
	Similarly, for any ${\mathcal A}\in \Psi_\Gamma(\widetilde M, \widetilde E)$ and any weight $ {\mathfrak Q}\in \Psi_\Gamma(\widetilde M, \widetilde E)$,  using (\ref{eq:weightonlift}) we define the holomorphic perturbation
	\[{\mathcal A}(z):= {\mathcal A}\, {\mathfrak Q}^{-z}\]  of ${\mathcal A}$ and thanks to  Theorem  \ref{thm:KVPScov}, we know that 
		the  spectral $\zeta$-function (or ${\mathfrak Q}$- regularised trace of ${\mathcal A}$) \begin{equation}\label{zetatildeM}z\longmapsto\zeta^\Gamma_{{\mathcal A},{\mathfrak Q}}(z):={\rm TR}_\Gamma({\mathcal A}\, {\mathfrak Q}^{-z})
		\end{equation}
		is meromorphic with a known countable set of simple poles.
	So we can take the  finite parts of the Laurent expansions at zero and build (with some abuse of notation)   {\bf spectral $\zeta$-invariants} 
		\begin{equation}\label{zetazero}
	\zeta_{A,Q}(0):={\rm fp}_{z=0}\left(	\zeta_{A,Q}(z)\right);\quad 	\zeta^\Gamma_{ {\mathcal A},{\mathfrak Q}}(0):={\rm fp}_{z=0}\left(\zeta^\Gamma_{{\mathcal A},{\mathfrak Q}}(z)\right).
		\end{equation}
		 Take for $\ve>0$ \[A_\ve ( \mathbf  D ):= P(\mathbf  D)\, h_\ve(\mathbf \Delta);\,{\mathcal A}_\ve(\widetilde{\mathbf  D})= P(\widetilde{\mathbf  D})\, h_\ve(\widetilde{\mathbf  \Delta}) .\]  \begin{rk}Recall from Remark \ref{rk:simDelta} that whereas the  equivalence relation $\sim$ is stable under products of operators, the equivalence relation $\underset{ \rm \small diag}{\sim}$ is not.
		 \end{rk}
In particular, we do not a priori expect  the regularised trace   $\zeta_{A_\ve ( \mathbf  D ), Q_\ve(\mathbf \Delta)}(0)$ to
  coincide  with the regularised  $\Gamma$-trace  $	\zeta_{{\mathcal A}_\ve(\widetilde{\mathbf  D}),{\mathfrak Q}_\e(\widetilde{\mathbf  \Delta})}(0)$, which involves the product $P(\wt D)\,h_\ve(\widetilde {\mathbf \Delta}) \,Q_\ve({\wt {\mathbf  \Delta}} )^{-z}$.\\
The following theorem nevertheless relates the two regularised traces.

\begin{thm}\label{thm:liftedregtraces} Let $\ve >0$. With the above notations 
\begin{itemize} 
\item The meromorphic map
$$z\longmapsto \zeta_{A ( \mathbf  D ), Q_\ve(\mathbf \Delta)}(z)-	\zeta^\Gamma_{{\mathcal A}(\widetilde{\mathbf  D}),{\mathfrak Q}_\e(\widetilde{\mathbf  \Delta})}(z)$$
is   holomorphic at zero.
\item There is  some positive $\alpha$, and  an $\alpha$-local operator $A_{\ve,0} ({\mathbf  D})\in\left[A_\ve({\mathbf  D}) \right]_{\rm diag}$ such that the difference    $ \reallywidetilde{A_{\ve,0}({\mathbf  D})}-{\mathcal A}_\ve(\wt{\mathbf  D})$ has a smooth kernel and hence a well-defined trace ${\rm Tr}_\Gamma\left(\reallywidetilde{A_{\ve,0}(\mathbf D)}-{\mathcal A}_\ve(\wt{\mathbf  D}))\right)$. 
We have
\begin{eqnarray}\label{eq:liftedregtraces}
  \zeta_{A_\ve ( \mathbf  D ), Q_\ve(\mathbf \Delta)}(0)-	\zeta^\Gamma_{{\mathcal A}_\ve(\widetilde{\mathbf  D}),{\mathfrak Q}_\ve(\widetilde{\mathbf  \Delta})}(0) = {\rm Tr}_\Gamma(\reallywidetilde{A_{\ve,0}({\mathbf  D})}-{\mathcal A}_\ve(\wt{\mathbf  D})),
\end{eqnarray}
\item Spectral $\zeta$-invariants   canonically lift to the covering. In other words, when  $h\equiv 1$, identity (\ref{eq:liftedregtraces}) amounts to the coincidence of the zeta-regularised trace and its lifted counterpart:
    \begin{equation}
   \label{eq:diffliftedregtraces} \zeta_{P ( \mathbf  D ), Q_\ve(\mathbf \Delta)}(0)=	\zeta^\Gamma_{P(\widetilde{\mathbf  D}),{\mathfrak Q}_\e(\widetilde{\mathbf  \Delta})}(0) . \end{equation} 
   \item The above statements extend to super-regularised traces for operators  acting on sections of a $\Z_2$-graded bundle, replacing  the canonical trace ${\rm TR}$ by a $\Z_2$-graded canonical tace ${\rm sTR} $ and correspondingly  $\zeta-$ functions by $\Z_2$-graded $\zeta-$ functions $s\zeta_{A,Q}(z):={\rm sTR}(A\, Q^{-z})$.
   \end{itemize}
\end{thm}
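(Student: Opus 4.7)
The plan is to deduce the whole statement from Corollary \ref{cor:KVPScomparison} applied to the two holomorphic families
\[A(z) := P(\mathbf D)\, h_\ve(\mathbf \Delta)\, Q_\ve(\mathbf \Delta)^{-z} \in \Psi_{\rm cl}(M, E), \qquad \mathcal A(z) := P(\widetilde{\mathbf D})\, h_\ve(\widetilde{\mathbf \Delta})\, \mathfrak Q_\ve(\widetilde{\mathbf \Delta})^{-z} \in \Psi_{{\rm cl}, \Gamma}(\widetilde M, \widetilde E),\]
both of holomorphic order $-2d\,z + a$, where $a$ is the order of $P(\mathbf D)\, h_\ve(\mathbf \Delta)$. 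Their holomorphicity in the sense of Definition \ref{defn:holfamilies} follows from the Cauchy-integral constructions \eqref{eq:hQ} and \eqref{frakQz}. Once the hypothesis of Corollary \ref{cor:KVPScomparison} at $z = 0$ is verified, namely that $\mathcal A_\ve(\widetilde{\mathbf D}) - \widetilde{A_{\ve,0}(\mathbf D)}$ has a smooth kernel for a suitable $\alpha$-local representative, the corollary produces in one stroke the holomorphicity of the difference of zeta functions at $z = 0$ and the explicit identity \eqref{eq:liftedregtraces}.

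The crucial step is therefore to construct the $\alpha$-local representative and verify the smoothness. First I would apply Lemma \ref{lem:A0S} to $A_\ve(\mathbf D) = P(\mathbf D)\, h_\ve(\mathbf \Delta)$ to obtain, for some $\alpha > 0$, an $\alpha$-local operator $A_{\ve, 0}(\mathbf D) \in [A_\ve(\mathbf D)]_{\rm diag}$. By Proposition \ref{prop:ep-local}(2) its lift $\widetilde{A_{\ve, 0}(\mathbf D)}$ is a well-defined element of $\Upsi_{{\rm cl}, \Gamma}(\widetilde M, \widetilde E)$ satisfying $\sigma(\widetilde{A_{\ve, 0}(\mathbf D)}) \sim \reallywidetilde{\sigma(A_\ve(\mathbf D))}$ via \eqref{eq:sigmafraktildeAzero}. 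On the other hand, since differential operators lift exactly, $P(\widetilde{\mathbf D})$ coincides with the lift of $P(\mathbf D)$, and Lemma \ref{lem:hstarQ} gives $\sigma(h_\ve(\widetilde{\mathbf \Delta})) \sim \reallywidetilde{\sigma(h_\ve(\mathbf \Delta))}$. Combining these with the fact that the star-product commutes with the lift, the symbol of $\mathcal A_\ve(\widetilde{\mathbf D}) = P(\widetilde{\mathbf D})\, h_\ve(\widetilde{\mathbf \Delta})$ is asymptotic, in every trivialising chart, to the lifted symbol $\reallywidetilde{\sigma(A_\ve(\mathbf D))}$. Hence $\mathcal A_\ve(\widetilde{\mathbf D}) - \widetilde{A_{\ve, 0}(\mathbf D)}$ has vanishing full symbol, so it has a smooth kernel; being $\Gamma$-invariant, it is $\Gamma$-trace class by \eqref{eq:Gammaordtrace}.

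With the smoothness verified, Corollary \ref{cor:KVPScomparison} applied to $A(z)$ and $\mathcal A(z)$ gives the holomorphicity of $z \mapsto \zeta_{A_\ve(\mathbf D), Q_\ve(\mathbf \Delta)}(z) - \zeta^\Gamma_{\mathcal A_\ve(\widetilde{\mathbf D}), \mathfrak Q_\ve(\widetilde{\mathbf \Delta})}(z)$ at $z = 0$ and, via \eqref{eq:RestildeAz}, the identity \eqref{eq:liftedregtraces}. The special case $h \equiv 1$ is immediate: $A_\ve(\mathbf D) = P(\mathbf D)$ is then a differential operator, hence $0$-local, so we may take $A_{\ve, 0}(\mathbf D) := P(\mathbf D)$ and $\widetilde{A_{\ve, 0}(\mathbf D)} = P(\widetilde{\mathbf D}) = \mathcal A_\ve(\widetilde{\mathbf D})$; the right-hand side of \eqref{eq:liftedregtraces} then vanishes, yielding \eqref{eq:diffliftedregtraces}. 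The $\Z_2$-graded extension is verbatim: the pointwise residue and canonical-trace densities, the local-linear-form machinery of Proposition \ref{prop:ResTRliftedA}, and the trace-defect formula of Theorem \ref{thm:KVPScov} all transpose without change after replacing the fibrewise trace $\tr$ by the supertrace $\str$, since the resolvent Cauchy calculus of $Q_\ve(\mathbf \Delta)^{-z}$ respects the grading.

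The principal obstacle is the symbolic matching in the second paragraph: the operators $\mathcal A_\ve(\widetilde{\mathbf D})$ and $\widetilde{A_{\ve, 0}(\mathbf D)}$ are constructed by completely distinct procedures --- the former by resolvent Cauchy integrals performed on the covering, the latter by locally lifting a finite-propagation approximation of an operator defined on $M$ --- so their agreement modulo smoothing is not transparent. It rests precisely on the fact, encoded in Lemma \ref{lem:hstarQ}, that the star-product is a purely local differential-operator-level operation on symbols and therefore commutes with the lifting operation $\sigma \mapsto \reallywidetilde{\sigma}$. Once this symbolic coincidence is pinned down, everything else is an application of Corollary \ref{cor:KVPScomparison}.
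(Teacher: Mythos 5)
Your proposal is correct and follows essentially the same route as the paper's own proof: both reduce to Corollary \ref{cor:KVPScomparison} applied to the families $A_\ve(\mathbf D)\,Q_\ve(\mathbf\Delta)^{-z}$ and $\mathcal A_\ve(\wt{\mathbf D})\,Q_\ve(\wt{\mathbf\Delta})^{-z}$, with the crucial verification --- that the difference at $z=0$ between $\mathcal A_\ve(\wt{\mathbf D})$ and the lifted $\ve$-local representative has a smooth kernel --- carried out via the locality of the $\star$-product encoded in Lemma \ref{lem:hstarQ}. The treatment of the special case $h\equiv 1$ and the $\Z_2$-graded extension also match the paper.
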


\begin{proof}The operators $ Q_\ve (\mathbf \Delta)^{-z}$, resp.  $ Q_\ve (\wt { \mathbf \Delta})^{-z}$ built from complex powers define holomorphic families on $M$, resp. $\wt M$ and hence so do the  operators $  B_\ve  (\mathbf D)(z):=  A_\ve  (\mathbf D)\,Q_\ve (\mathbf \Delta)^{-z} $, resp. ${\mathcal B}_\ve(\wt{\mathbf D})(z):= {\mathcal A}_\ve (\wt{\mathbf D})\,Q_\ve (\wt { \mathbf \Delta})^{-z}$  define   homolorphic families on $M$, resp. $\wt M$, to which we want to apply Corollary \ref{cor:KVPScomparison}. To simplify notations, we drop the index $\ve$ and the operator ${\mathbf D}$ from the notation, setting   $   B(z):= B_\ve(z) (\mathbf D)$ and
	${\mathcal B}(z):={\mathcal B}_\ve(z)(\wt{\mathbf D})$. With these notations
\begin{equation}\label{eq:useful1}	\zeta_{A_\ve ( \mathbf  D ), Q_\ve(\mathbf \Delta)}(0)-	\zeta_{{\mathcal A}_\ve(\widetilde{\mathbf  D}),{\mathfrak Q}_\e(\widetilde{\mathbf  \Delta})}(0)= {\rm TR}_\Gamma\left(\mathcal B(z)\right)- {\rm TR} \left(  B(z)\right).\end{equation}
On the one hand and  as in Corollary \ref{cor:KVPScomparison}, we build a $\ve$-local holomorphic family $B_0(z)\in [B(z)]_{\rm diag}$. In particular,  the operator $B_0:=B_0(0)= {A}_\ve( {\mathbf D})$ has symbol $ P(\sigma({\mathbf  D}))\star h_\star (\sigma(Q_\ve( \mathbf \Delta))$. So, on the one hand its lift $\wt{B_0}$ has symbol $\reallywidetilde{P(\sigma({\mathbf  D}))\star h_\star (\sigma(Q_\ve( \mathbf \Delta))}$. 
On the other hand, the operator $\mathcal{B}(0)=\mathcal{A}_\ve(\wt{\mathbf D})$ has  symbol $P(\sigma(\wt{\mathbf  D}))\star h_\star(\sigma(Q_\ve(\wt{ \mathbf \Delta})))$.  By  (\ref{eq:hstarQ}) applied to $\mathfrak Q=Q_\ve(\wt{\mathbf \Delta})$ and $Q=Q_\ve(\mathbf \Delta)$, we have $h_\star(\sigma(Q_\ve(\wt{ \mathbf \Delta})))\sim \reallywidetilde{h_\star (\sigma(Q_\ve( \mathbf \Delta))}$.  
 Consequently, using again the locality of $\star$ as in Lemma \ref{lem:hstarQ}, we find that the operator  $\mathcal{B}(0)$ has the same symbol as $\reallywidetilde{B_0}$. Hence the difference   $\mathcal{B}(0)-\reallywidetilde{B_0}$ lies in $\cap_{m\in \R}\Upsi_\Gamma^{m}(\wt M, \wt E)$ as a result of which it is  $\Gamma$-trace-class and therefore has a well-defined $\Gamma$-trace ${\rm Tr}_\Gamma \left(\mathcal B(0)-\wt{B_0}\right)$ (see \ref{eq:Gammaordtrace}).\\ Applying Corollary \ref{cor:KVPScomparison}   yields the holomorphicity at $z=0$ of the map in (\ref{eq:useful1})
 $$ 	\zeta_{A_\ve ( \mathbf  D ), Q_\ve(\mathbf \Delta)}(0)-  	\zeta_{{\mathcal A}_\ve^\Gamma(\widetilde{\mathbf  D}),{\mathfrak Q}_\e(\widetilde{\mathbf  \Delta})}(0)= {\rm fp}_{z=0}\left( {\rm TR}_\Gamma(\mathcal B(z))- {\rm  TR}_\Gamma(\reallywidetilde{B_0(z)})\right)
={\rm Tr}_\Gamma \left(\mathcal B(0)-\wt{B_0}\right),$$  which corresponds to (\ref{eq:liftedregtraces}).\\
If $h\equiv 1$ , then $B_0=B(0)= P({\mathbf  D})$ is a differential operator and we have ${\mathcal  B}(0) =P(\wt{\mathbf  D}) = \reallywidetilde{P({\mathbf  D})} =\reallywidetilde{B(0)}=\reallywidetilde{B_0} $ so that ${\rm Tr}_\Gamma({\mathcal  B}(0) -\wt{B_0})=0$, from which the assertion \eqref{eq:diffliftedregtraces} follows.
	\item Replacing ${\rm TR}$ by its $\Z_2$-graded analog ${\rm sTR}$ yields the last assertion. \qedhere.
\end{proof} 

\subsubsection{ The $L^2$-Atiyah index theorem revisited}\label{subsec:L2index}
 We apply the above construction to  a hermitian $\Z_2$-bundle $E=E_+\oplus E_-$ over a closed Riemannian manifold $M$, so that its pull-back $F:=\wt E=\wt E_+\oplus \wt E_-$  by $\pi$ is a  $\Z_2$ graded $\Gamma$-equivariant vector bundle over $X:= \wt M$. \\
 Let $ D_+: C^\infty( M, E_+)\to C^\infty( M,  E_-)$ be an elliptic differential operator of order $d$ with   formal adjoint $ D_-:  C^\infty( M, E_-)\to C^\infty( M, E_+)$. Let  $\wt D_\pm: C^\infty(X,F_\pm)\to C^\infty(X, F_\mp)$ be the lifted differential operators.  The operator   \begin{equation}\label{eq:wtD}\wt D:
	= \begin{bmatrix}
	0 & \wt D_-  \\
	\wt D_+ & 0  
	\end{bmatrix} 
\end{equation} is a $\Gamma$-invariant  elliptic differential operator of positive order $d$ to which we apply the above constructions.

  Even though the kernels $\{s\in C_c^\infty(\wt M, \wt E), \; \wt D_\pm s=0\}$  are not necessarily   finite dimensional, their closures $K_{\wt D_\pm}$   are   finitely generated $\Gamma$-modules  and  hence isometrically
$\Gamma$-isomorphic to Hilbert
$\Gamma$-subspaces of the Hilbert
space $\ell_2(\Gamma)^n$ for some positive integer $n$, which  can be represented by   idempotent matrices $P^\pm=(p_{ij}^\pm)\in {\rm gl}_n\left({\mathcal N}\Gamma\right)$. Let  $\chi_{\{0\}}(\wt D_\pm)=\chi_{\{0\}}(\wt \Delta)$ denote the orthogonal projections  onto $K_{\wt D_+}\oplus K_{\wt D_-}$.

The $\Gamma$-dimension (resp. $\Gamma$- graded dimension) of $K_{\wt D_\pm}$  is (see Appendix \ref{sec:appHM})
 $$
 {\rm dim}_\Gamma K_{\wt D_\pm}:= \sum_{i=1}^n\langle p_{ii}^\pm(e), e\rangle, \quad {\rm resp.} \quad {\rm sdim}_\Gamma (K_{\wt \Delta}):=  {\rm dim}_\Gamma K_{\wt D_+}-  {\rm dim}_\Gamma K_{\wt D_-}={\rm sTr}_\Gamma\left(\chi_{\{0\}}( \wt \Delta)\right),
 $$
where $e\in \C\Gamma$ is the element with all components zero outside the first one which is one and  the $\Gamma$-index of $D$ is
$${\rm ind}_\Gamma(\wt D):= {\rm dim}_\Gamma K_{\wt D_+}-{\rm dim}_\Gamma K_{\wt D_-}.
$$
The $\Gamma$-Wodzicki residue and the $\Gamma$-canonical trace extend in a straightforward manner to a $\Z_2$-graded Wodzicki residue ${\rm  sRes}_\Gamma$ and a $\Z_2 $-graded canonical trace ${\rm sTR}_\Gamma$.

\begin{cor}
\label{cor:indres2}With the notation of \eqref{eq:wtD}, $   \log( Q_\ve(\wt\Delta)  )$ has a well-defined (super) $\Gamma$-residue and  we have
 \begin{equation}
 \label{eq:indres2}{\rm ind}_\Gamma(\wt D_+)= -\frac{1}{q}{ \rm
 sRes}_\Gamma  (  \log(  Q_\ve(\wt\Delta)  ))=-\frac{1}{q}\,{ \rm sRes}(  \log(  Q_\ve(\Delta)  ))={\rm ind}(D_+).
\end{equation}
\end{cor}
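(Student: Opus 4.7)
My plan is to mimic the closed-manifold argument of Corollary \ref{cor:indexD} in the $\Gamma$-setting, using the $\Gamma$-trace defect formula of Theorem \ref{thm:KVPScov}, and then transport the result down to the base via the locality/lifting of the Wodzicki residue encoded in Theorem \ref{thm:liftedregtraces}. The three equalities in \eqref{eq:indres2} would be proved in order.

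\textbf{Step 1 ($\Gamma$-McKean--Singer).} First I would show that for $\Re(z)>n/q$, the complex power $Q_\ve(\wt\Delta)^{-z}$ is $\Gamma$-trace-class and
\[
{\rm sTR}_\Gamma\bigl(Q_\ve(\wt\Delta)^{-z}\bigr)={\rm ind}_\Gamma(\wt D_+).
\]
The argument is the usual one: for $\ve$ small enough, $\chi_{[-\ve,\ve]}(\wt\Delta)$ equals the orthogonal projection $\chi_{\{0\}}(\wt\Delta)$ onto $\ker\wt\Delta$, so $Q_\ve(\wt\Delta)$ acts as the identity on this kernel; meanwhile $\wt D_+$ intertwines the positive spectral projections of $\wt D_-\wt D_+$ and $\wt D_+\wt D_-$ and the intertwiner is $\Gamma$-equivariant, so its polar decomposition makes the two $\Gamma$-spectral densities coincide on the positive part and cancel in the supertrace. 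What remains is ${\rm sTr}_\Gamma(\chi_{\{0\}}(\wt\Delta))={\rm ind}_\Gamma(\wt D_+)$.

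\textbf{Step 2 (trace defect on the covering).} By Theorem \ref{thm:KVPScov}, $z\mapsto {\rm sTR}_\Gamma(Q_\ve(\wt\Delta)^{-z})$ is meromorphic on all of $\C$, so by analytic continuation of Step 1 it is in fact constant, equal to ${\rm ind}_\Gamma(\wt D_+)$; in particular its finite part at $z=0$ equals the $\Gamma$-index. Applying the $\Z_2$-graded version of \eqref{eq:PSclassicalopdiff-bis} to the family $\mathcal A(z):=Q_\ve(\wt\Delta)^{-z}$ (whose value at $z=0$ is the differential operator $I$, and whose derivative at $z=0$ is $-\log Q_\ve(\wt\Delta)$) then yields
\[
{\rm ind}_\Gamma(\wt D_+)\;=\;{\rm fp}_{z=0}{\rm sTR}_\Gamma\bigl(Q_\ve(\wt\Delta)^{-z}\bigr)\;=\;-\frac{1}{q}\,{\rm sRes}_\Gamma\bigl(\log Q_\ve(\wt\Delta)\bigr),
\]
which is the first equality of \eqref{eq:indres2}.

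\textbf{Step 3 (locality and descent to $M$).} For the middle equality I would invoke the canonical lifting \eqref{eq:diffliftedregtraces} (in its $\Z_2$-graded form) of the spectral $\zeta$-invariant, with $P\equiv 1$: combining it with Step 2 on $\wt M$ and with its closed-manifold counterpart \eqref{eq:reslogdeltaR} on $M$ (applied with $\phi\equiv 1$) gives
\[
-\frac{1}{q}\,{\rm sRes}_\Gamma\bigl(\log Q_\ve(\wt\Delta)\bigr)\;=\;{\rm sfp}_{z=0}{\rm sTR}_\Gamma\bigl(Q_\ve(\wt\Delta)^{-z}\bigr)\;=\;{\rm sfp}_{z=0}{\rm sTR}\bigl(Q_\ve(\Delta)^{-z}\bigr)\;=\;-\frac{1}{q}\,{\rm sRes}\bigl(\log Q_\ve(\Delta)\bigr).
\]
Equivalently, this is a direct instance of Proposition \ref{prop:ResTRliftedA}: the graded Wodzicki residue is a local linear form and $\log Q_\ve(\wt\Delta)$ is the canonical lift of $\log Q_\ve(\Delta)$ in the appropriate log-polyhomogeneous extension. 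The last equality of \eqref{eq:indres2} is then Corollary \ref{cor:indexD}, since for $\ve$ small enough $\chi_{[-\ve,\ve]}(\Delta)=\chi_{\{0\}}(\Delta)$.

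\textbf{Main obstacle.} The only delicate point is the $\Gamma$-McKean--Singer cancellation in Step 1: one must carefully justify that the spectral projections $\chi_{(a,b)}(\wt D_-\wt D_+)$ and $\chi_{(a,b)}(\wt D_+\wt D_-)$ are intertwined in a way compatible with the $\Gamma$-trace on the von Neumann algebra of $\Gamma$-equivariant operators, so that the non-zero spectrum contributes zero to ${\rm sTr}_\Gamma$. This is classical in $L^2$-index theory and can also be read off the Hilbert module bundle picture of Appendix \ref{sec:appHM}, so no new input is needed. A small side-point is that $\log Q_\ve(\wt\Delta)$ is log-classical rather than classical, but its (super) $\Gamma$-residue is defined exactly as in Corollary \ref{cor:indexD} via the defect formula itself; nothing further is required.
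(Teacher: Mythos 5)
Your proof is correct and follows essentially the same route as the paper: the $\Gamma$-McKean--Singer identity to express ${\rm ind}_\Gamma(\wt D_+)$ as the constant meromorphic function ${\rm sTR}_\Gamma(Q_\ve(\wt\Delta)^{-z})$, the lifted trace-defect formula of Theorem \ref{thm:KVPScov} to convert the finite part at $z=0$ into $-\frac{1}{q}{\rm sRes}_\Gamma(\log Q_\ve(\wt\Delta))$, and the $h\equiv 1$ case \eqref{eq:diffliftedregtraces} of Theorem \ref{thm:liftedregtraces} together with Corollary \ref{cor:indexD} for the descent to $M$. The only (cosmetic) difference is the order of the equalities: the paper establishes ${\rm ind}_\Gamma(\wt D_+)={\rm ind}(D_+)$ directly from \eqref{eq:diffliftedregtraces} and then invokes the defect formula to insert the two residue expressions, whereas you work from left to right through \eqref{eq:indres2}; both are sound.
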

\begin{proof}   
By Corollary \ref{cor:indexD}, the index is a $\Z_2$-graded regularised trace of the identity:
${\rm ind} (D_+)= {\rm sTR}^{Q_\ve} (Id)$ and similarly, independently of $ \ve >0$, we have
\begin{eqnarray}
{\rm ind}_\Gamma(\widetilde D_+)&=&
{\rm sTR}_\Gamma \left(\chi_{\{0\}}(\wt \Delta)\right) \nonumber\\
&=&{\rm sTR}_\Gamma \left(\chi_{[-\ve, \ve]}(\wt \Delta)\right) \nonumber={\rm sTR}_\Gamma (  Q_\ve(\wt\Delta)^{-z} ) \nonumber
\;\;\text{seen as meromorphic functions}\\
&=&{\rm fp}_{z=0} {\rm sTR}_\Gamma\left(\left(Q_\ve (\wt \Delta)\right)^{-z}\right) \nonumber \;\text{ taking the finite part at zero}\nonumber\\
&=&  {\rm sTR}^{Q_\ve(\wt \Delta)}(\wt Id).
\end{eqnarray} 
 Theorem \ref{thm:liftedregtraces} applied to $h\equiv 1$  and ${\mathbf  D}\equiv 1$ then yields ${\rm ind}_\Gamma(\wt D_+)={\rm ind}_\Gamma(D_+)$. The   lifted  trace-defect formula (\ref{eq:PSclassicalop-bis}) derived in Theorem \ref{thm:KVPScov} applied to the family ${\mathcal A}(z):=\left(Q_\ve (\wt \Delta)\right)^{-z}$  further  yields the expression of ${\rm ind}_\Gamma(\widetilde D_+)$ as a Wodzicki residue.
\end{proof}
\subsubsection{The $\eta$-invariant revisited}
We now assume that both $D$ and $\wt D$ are essentially self-adjoint and  invertible, in which case   $Q:=D^2$ is a weight which lifts to  $\wt Q= \wt D^2$.  The operators $\vert D\vert^{-1}=\Delta^{-\frac{1}{2}}$ and $\vert\wt D\vert^{-1}=\wt\Delta^{-\frac{1}{2}}$  are defined as  Cauchy integrals (see (\ref{eq:hQ})) using  $h(x)= x^{-\frac{1}{2}}$ and the  $\eta$-invariant of $D$ can be expressed in terms of regularised traces \cite{CDP} as
$\ds\eta(D)= {\rm Tr}^Q(D\,\vert D\vert^{-1}); \quad \eta_\Gamma(\wt D)= {\rm Tr}_\Gamma^{\wt Q}(\wt D\,\vert\wt D\vert^{-1}).$ 
\begin{cor}
\label{cor:eta} There is an $\ve$-local operator $A_0\in 
\left[D\, \vert D\vert^{-1}\right]_{\rm diag}$ for some  small enough positive $\ve$ (see Definition \ref{defn:classlift}), such that the difference $\reallywidetilde{A_0(D)}-\wt D\, \vert \wt D\vert^{-1}$ has a smooth kernel and a well-defined $\Gamma$-trace and we have 
 
$$\eta(D)-\eta_\Gamma(\wt D)=  {\rm TR}_\Gamma\left(\reallywidetilde{A_0(D)}-\wt D\, \vert \wt D\vert^{-1}\right).$$
 \end{cor}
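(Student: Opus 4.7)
The plan is to derive the corollary as a direct specialisation of Theorem \ref{thm:liftedregtraces} to $P(x)=x$ and $h(x)=x^{-1/2}$, reading the two $\zeta$-invariants produced by the theorem as the eta invariants $\eta(D)$ and $\eta_\Gamma(\wt D)$. The assumed invertibility of $D$ and $\wt D$ removes the perturbation term appearing in $Q_\ve$, bringing us directly to the operators $D|D|^{-1}$ and $\wt D|\wt D|^{-1}$.

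First I would check that $h(\lambda)=\lambda_\pi^{-1/2}$ satisfies the decay estimate $|h(\lambda)|\leq|\lambda|^{-1/2}$ on a contour $\Gamma_\pi$ around $R_\pi=(-\infty,0]$, so that $h(Q_\ve(\Delta))$ and $h(Q_\ve(\wt\Delta))$ are well defined via the Cauchy integrals \eqref{eq:hQ} and \eqref{eq:rhohQ}. Invertibility of $D$ and $\wt D$ allows one to choose $\ve>0$ smaller than the squares of the infima of the $L^2$-spectra of $|D|$ and of $|\wt D|$, so that $\chi_{[-\ve,\ve]}(\Delta)=0$ and $\chi_{[-\ve,\ve]}(\wt\Delta)=0$; hence $Q_\ve(\Delta)=D^2$, $Q_\ve(\wt\Delta)=\wt D^2$, and $h_\ve(\Delta)=|D|^{-1}$, $h_\ve(\wt\Delta)=|\wt D|^{-1}$. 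In the notation of Section \ref{sec:liftedregtr}, $A_\ve(\mathbf D)=D|D|^{-1}$ and ${\mathcal A}_\ve(\wt{\mathbf D})=\wt D|\wt D|^{-1}$; moreover, from the definition of the eta invariants recalled before the corollary together with \eqref{zetazero}, one reads off $\eta(D)=\zeta_{A_\ve(\mathbf D),Q_\ve(\mathbf\Delta)}(0)$ and $\eta_\Gamma(\wt D)=\zeta^\Gamma_{{\mathcal A}_\ve(\wt{\mathbf D}),{\mathfrak Q}_\ve(\wt{\mathbf\Delta})}(0)$.

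Applying the identity \eqref{eq:liftedregtraces} of Theorem \ref{thm:liftedregtraces} then produces an $\ve$-local operator $A_0:=A_{\ve,0}(\mathbf D)\in[D|D|^{-1}]_{\rm diag}$ such that $\reallywidetilde{A_0}-\wt D|\wt D|^{-1}$ has a smooth kernel and
\begin{equation*}
\eta(D)-\eta_\Gamma(\wt D)={\rm Tr}_\Gamma\bigl(\reallywidetilde{A_0}-\wt D|\wt D|^{-1}\bigr).
\end{equation*}
As established inside the proof of Theorem \ref{thm:liftedregtraces}, this remainder actually lies in $\cap_{m\in\R}\Upsi^m_\Gamma(\wt M,\wt E)$, and so is $\Gamma$-trace class with its ordinary $\Gamma$-trace coinciding with its canonical $\Gamma$-trace ${\rm TR}_\Gamma$; this yields the formula displayed in the corollary.

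The step I expect to be the most delicate is verifying that the symbol identity underlying Theorem \ref{thm:liftedregtraces}---namely that ${\mathcal A}_\ve(\wt{\mathbf D})$ and $\reallywidetilde{A_{\ve,0}(\mathbf D)}$ share the same asymptotic symbol modulo smoothing---remains in force for the non-polynomial function $h(x)=x^{-1/2}$. Granting the locality of the star-product on the Cauchy integrand $(\sigma(Q)-\lambda)^{\star-1}$, the relation $h_\star(\sigma(\mathfrak Q))\sim \reallywidetilde{h_\star(\sigma(Q))}$ of Lemma \ref{lem:hstarQ} provides exactly the needed commutation with the lift, after which the argument of Theorem \ref{thm:liftedregtraces} applies verbatim and the conclusion is obtained by pure substitution.
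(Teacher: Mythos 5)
Your proposal is correct and follows essentially the same route as the paper: specialise Theorem \ref{thm:liftedregtraces} to $P(x)=x$, $h(x)=x^{-1/2}$, and read the resulting $\zeta$-invariants as $\eta(D)$ and $\eta_\Gamma(\wt D)$. Your additional observations—that invertibility gives a spectral gap so the perturbation $\chi_{[-\ve,\ve]}$ vanishes for small $\ve$ and $Q_\ve$ reduces to $D^2$ resp.\ $\wt D^2$, and that the smooth-kernel remainder is $\Gamma$-trace class so ${\rm Tr}_\Gamma$ coincides with ${\rm TR}_\Gamma$—are precisely the details the paper leaves implicit, and they are handled correctly.
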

 
\begin{proof} The statement is a straightforward consequence of Theorem \ref{thm:liftedregtraces} applied to $P(x)=x$ and $h(x)=x^{-\frac{1}{2}}$, and the right hand side does not depend on the representative $A_0$ in $ [D\, \vert D\vert^{-1}]_{\rm diag}$.
\end{proof}
\subsection{Invariants built from geometric operators}
Let $F \to X$ be a vector bundle over a  Riemannian manifold $\left(X,g\right)$. We assume that $X$ is spin and $F=S\otimes W$ where $S$ is the spinor bundle and $W$ an auxillary bundle equipped with a connection $\nabla^W$. This way, $F$ can be equipped with a connection $\nabla^F:= \nabla\otimes 1+ 1\otimes \nabla^W$, where $\nabla$ is the Levi-Civita connection on $S$. \\ Following Gilkey's notations \cite[Formula (2.4.3)]{G}, for a multi-index $\alpha=(\alpha_1,\cdots, \alpha_s)$
we introduce formal variables $g_{ij/\alpha}:= \partial_\alpha g_{ij}$ 
for the partial derivatives of the metric
tensor $g$ on the manifold $X$
and  similarly $\omega_{i/\alpha}:= \partial_\alpha \omega_{i}$ for  the connection
$\omega$
on the external bundle. Let us set
${\rm ord}\left(g_{ij/\alpha}\right)= \vert \alpha\vert=\alpha_1+\cdots+\alpha_s 
; {\rm  ord}(\omega_{i/\beta})=\vert \beta\vert$. 
Inspired by Gilkey \cite[Formulae (1.8.18) and (1.8.19)]{G}    and following \cite{MP}  we set the following definition.
\begin{defn}
	Let $A\in \Psi_{\rm cl}(X,F)$ be a classical  (resp. a  log-polyhomogeneous  --see \cite{Le} and references therein--) operator of order $a$ with symbol $\sigma(A)\sim \sum_{j=0}^\infty\sigma_{a-j}(A)(\xi)$  (resp. $\sigma(A)(\xi)\sim \sum_{\ell=0}^{k}\sum_{j=0}^\infty\sigma_{a-j, \ell}(A)\, \log^\ell \vert \xi\vert$) with $\sigma_{a-j}(A)(\xi)$ (resp.  $\sigma_{a-j,\ell}(A)(\xi)$) homogeneous of degree $a-j$ for $\vert \xi\vert\geq 1$. We call $A$  
	{\bf geometric}, if
	in any local trivialisation, the homogeneous components
	$\sigma_{a-j}
	(A) $ (resp. $\sigma_{a-j, \ell}
	(A) $ for any $\ell\in \{0,\cdots, k\}$)
	are homogeneous of
	order
	$j$
	in the jets of the metric and of the connection.
\end{defn}
In particular, a differential operator
$A=\sum_{\vert \alpha\vert \leq a}c_\alpha(x)\, \partial_x^\alpha$ is geometric if $c_\alpha(x)$  is homogeneous of degree $j= a-\vert \alpha\vert$ in the metric and the connection on the auxillary bundle.
The Laplace-Beltrami operator (resp. the Dirac operator) associated with the metric $g$ (resp. and the connection on $W$) are geometric differential operators. More generally, the Laplace  operator (resp. Dirac   operator) associated with the connection $\nabla^F$ are  geometric differential operators. 
\begin{lem}
	An operator ${\mathcal A}\in \Psi_{\Gamma, {\rm cl}}(\wt M, \wt E)$ such that \[{\mathcal A}\sim \wt A_0\quad {\rm and}\quad A_0\sim A\]for some geometric operator   $A\in\Psi_{\rm cl}(M, E)$,  is itself geometric. In particular, with the notations of \eqref{eq:liftedOp} we have
	\[\left(A\, {\rm geometric}\, {\rm and}\, {\mathcal A}\in \wt{[ A ]_{\rm diag}}\right)\Longrightarrow \left( {\mathcal A}\, {\rm geometric} \right).\]
\end{lem}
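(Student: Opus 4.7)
The plan is to reduce the statement to a purely symbolic verification, using that "geometric" is a property of the homogeneous components of the symbol, and then invoke the fact that both the equivalence relations $\sim$ and $\underset{\rm diag}{\sim}$ leave those components unchanged, while the lift $A\mapsto \wt A$ preserves them in the natural covariant manner.

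First I would record the observation that being geometric depends only on the homogeneous components $\sigma_{a-j}(A)$ (resp.\ $\sigma_{a-j,\ell}(A)$) in any local trivialisation, and that these components are invariants of the equivalence class $[A]$ for the relation $\sim$ (and a fortiori of $[A]_{\rm diag}$), because operators differing by a smoothing operator have the same asymptotic symbol expansion. Hence if $A_0\sim A$ and $A$ is geometric, then $A_0$ is geometric: its homogeneous symbol components at each point $x\in M$ are the same polynomial expressions in the variables $g_{ij/\alpha}(x)$ and $\omega_{i/\beta}(x)$ as those of $A$, with the requisite homogeneity in the jets.

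Next I would argue that pulling back to $\wt M$ preserves geometricity. Since $\wt M$ carries the Riemannian metric $\pi^* g$ and $\wt E=\pi^* E$ carries the pulled-back connection $\pi^*\nabla^W$, the local jets of the metric and the connection on $\wt M$ in a chart $\wt U$ lying over a trivialising chart $U\subset M$ coincide with the pullbacks of the corresponding jets on $M$, i.e. $\wt g_{ij/\alpha}(\wt x)=g_{ij/\alpha}(\pi(\wt x))$ and similarly for $\wt\omega_{i/\beta}$. By Proposition \ref{prop:ShubinTh1} (and Lemma \ref{prop:ep-local} for the $\ve$-local piece), the symbol of $\wt A_0$ satisfies $\sigma(\wt A_0)\sim\reallywidetilde{\sigma(A_0)}$, so each homogeneous component transforms as $\sigma_{a-j}(\wt A_0)(\wt x,\xi)=\reallywidetilde{\sigma_{a-j}(A_0)}(\wt x,\xi)=\sigma_{a-j}(A_0)(\pi(\wt x),\xi)$. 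The polynomial expression giving the required order-$j$ homogeneity in the jets therefore lifts verbatim, showing that $\wt A_0$ is geometric. Finally, since $\mathcal A\sim \wt A_0$ by assumption, $\mathcal A$ and $\wt A_0$ share the same asymptotic symbol expansion, so $\mathcal A$ inherits geometricity.

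I do not expect any substantial obstacle: the whole content of the lemma is the observation that the defining property of being geometric is purely local in symbol space and that all three operations involved (passing to an $\ve$-local representative, modifying by a smoothing operator, and lifting along $\pi$) act trivially on the relevant homogeneous symbol data, the last because the Riemannian and bundle data on $(\wt M,\wt E)$ are $\pi$-pullbacks. The only point requiring a little care is to check that this argument also covers the log-polyhomogeneous case, where one has to apply the same reasoning componentwise to each $\sigma_{a-j,\ell}(A)$ for $\ell\in\{0,\dots,k\}$; this is routine since the lift commutes with taking the coefficient of $\log^\ell|\xi|$ in the asymptotic expansion.
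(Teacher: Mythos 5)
Your proposal is correct and follows essentially the same route as the paper: you trace the chain of symbol equivalences $\sigma(\mathcal A)\sim\sigma(\wt A_0)\sim\reallywidetilde{\sigma(A_0)}\sim\reallywidetilde{\sigma(A)}$ and then observe that the lift obeys the same homogeneity conditions with respect to $\wt g$ and $\wt\nabla^W$, which is exactly the paper's argument. The only difference is that you spell out explicitly that the jets of the metric and connection on $\wt M$ are pullbacks of those on $M$ and treat the log-polyhomogeneous components componentwise; both are details the paper leaves implicit.
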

\begin{proof}
	Since  $\sim$ preserves symbols and  symbols lift to the covering (see \eqref{eq:sigmafraktildeAzero}), we have
	\[\sigma\left({\mathcal A}\right)\sim \sigma\left(\wt A_0\right)\sim\reallywidetilde { \sigma\left(A_0\right)}\sim \reallywidetilde { \sigma\left(A\right)}.\]  The fact that the operator $A$ is geometric amounts to  the components $\sigma_{a-j,\ell}\left(A\right)$ being homogeneous of degree $j$ in the jets of the metric and the connection. Its lift $ \wt { \sigma\left(A\right)}$ obeys the  same conditions w.r.t to the metric $\wt g$ and the connection $\wt \nabla^W$ on the covering and hence so does $\sigma\left({\mathcal A}\right)$, which shows that ${\mathcal A}$ is geometric.
\end{proof}
The results of \cite{MP} relative to  holomorphic families of the type $A(z)= A\, Q^{-z}$ generalise to any holomorphic family. The proof of this more general statement can be carried out along the same lines of the proofs of  \cite[Corollary 1 and Theorem 1]{MP}.
Adopting Gilkey's notations \cite[par. 2.4]{G},  let us denote by
${\mathcal P}^{g,\nabla^W}_{n,k,p}$  
(which we write ${\mathcal P}^{g }_{n,k,p}$  in the absence of twisting)  the linear space consisting of
$p$-  form valued invariant 
polynomials
that are homogeneous of order
$k$
in the jets of the metric
and of the connection
$\nabla^W$. \begin{thm}\label{thm:geometricop} Let $A(z)\in \Psi_{\rm cl}(M,E)$ and ${\mathcal A}(z)\in \Psi_{\Gamma,{\rm cl}}(\wt M, \wt E)$ be two holomorphic families of classical pseudodifferential operators such that\[A(z)\, {\rm geometric}\, {\rm and}\, {\mathcal A}(z)\in \reallywidetilde{[ A (z)]_{\rm diag}}.\] Then
	$  {\mathcal A}(z)$ is geometric and if both  $A(0)$ and ${\mathcal A} (0)$ are differential operators,  
	\begin{enumerate}
		\item for any $x\in \wt M$,  the  residue densities  defined  in \eqref{eq:covresiduedensity}		\[{\rm Res}_{x}({\mathcal A}^\prime(0))=\reallywidetilde{{\rm Res}_{\pi(x)}(A^\prime(0))} \]
		lie in ${\mathcal P}_{n,n,n}^{\wt  g, \wt \nabla^ W}$. 
		\item Consequently, \[	{\rm fp}_{z=0}{\rm TR}_\Gamma\left(\mathcal A  (z)\right)=\frac{1}{q}\, {\rm Res}_{\Gamma}({\mathcal A}^\prime(0))=\frac{1}{ q}\, \int_{F} {\rm Res}_{ x}({\mathcal A}^\prime(0))(x)\, dx= \frac{1}{ a}\, \int_{M} {\rm Res}_{x}({\mathcal A}^\prime(0))(x)\, dx\]  is  the integral of densities generated by Pontrjagin forms on the fundamental domain and Chern forms on the auxillary bundle. 
	\end{enumerate}
\end{thm}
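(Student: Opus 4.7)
The plan is to proceed in three steps corresponding to the three assertions of the theorem: (i) that $\mathcal A(z)$ is geometric, (ii) the pointwise residue identity, and (iii) the global formula, which comes from the lifted trace-defect formula of Theorem \ref{thm:KVPScov}.

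First I would establish the geometricity of $\mathcal A(z)$ by a direct application of the preceding Lemma. Writing $A(z)\underset{\rm diag}{\sim} A_0(z)$ for an $\ve$-local representative chosen holomorphically in $z$ (the holomorphic dependence follows from the partition-of-unity construction in the proof of Corollary \ref{cor:KVPScomparison}), we have $\mathcal A(z)\underset{\rm diag}{\sim} \wt{A_0(z)}$ and hence $\sigma(\mathcal A(z))\sim \reallywidetilde{\sigma(A(z))}$ by \eqref{eq:sigmafraktildeAzero} and Lemma \ref{prop:ep-local}(2). Lifting by $\pi$ pulls the Levi-Civita connection and $\nabla^W$ on $M$ back to $\wt g$ and $\wt\nabla^W$ on $\wt M$; since lifting commutes with $(x,\xi)$-differentiation and preserves the homogeneity degree in the jets of $g$ and $\omega$, every homogeneous log-component $\sigma_{a(z)-j,\ell}(\mathcal A(z))$ is again homogeneous of order $j$ in the jets of $\wt g$ and $\wt\nabla^W$.

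Next, for (1), I would assume $A(0)$ and $\mathcal A(0)$ are differential operators and differentiate the identity $\sigma(\mathcal A(z))\sim \reallywidetilde{\sigma(A(z))}$ at $z=0$. Since the map $\lambda \mapsto \reallywidetilde{\lambda}$ is $z$-independent, one obtains $\sigma(\mathcal A'(0))\sim \reallywidetilde{\sigma(A'(0))}$ locally around any point $x\in\wt M$. The pointwise residue density depends only on a single log-polyhomogeneous component of the symbol (indexed by degree $-n$), so the identity ${\rm Res}_{x}(\mathcal A'(0))=\reallywidetilde{{\rm Res}_{\pi(x)}(A'(0))}$ follows. The fact that ${\rm Res}_{\pi(x)}(A'(0))$ lies in $\mathcal P^{g,\nabla^W}_{n,n,n}$ is the geometric invariance result of \cite[Cor.~1 and Thm.~1]{MP}, whose proof, as the authors note, extends verbatim to an arbitrary holomorphic family of geometric operators whose value at $z=0$ is differential. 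The lift then yields a density in $\mathcal P^{\wt g,\wt\nabla^W}_{n,n,n}$.

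Finally, for (2), the lifted trace-defect formula \eqref{eq:PSclassicalopdiff-bis} of Theorem \ref{thm:KVPScov}, applied to $\mathcal A(z)$ at the simple pole $d_k=0$ (where $\mathcal A(0)$ is differential, so that ${\rm TR}_\Gamma(\mathcal A(0))=0$), gives
\[
{\rm fp}_{z=0}{\rm TR}_\Gamma(\mathcal A(z))=\frac{1}{q}\,{\rm Res}_\Gamma(\mathcal A'(0))=\frac{1}{q}\int_F{\rm Res}_x(\mathcal A'(0))\,dx.
\]
By step (1) the integrand is the $\pi$-lift of a geometric density on $M$, so $\Gamma$-invariance and the fact that $F$ is a fundamental domain identify the integral over $F$ with the integral over $M$ of ${\rm Res}_x(A'(0))\,dx$, which by Gilkey's invariant theory is a polynomial expression in Pontrjagin forms of $(M,g)$ and Chern forms of $(W,\nabla^W)$.

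The main obstacle is step (1), more precisely verifying that the proof of \cite[Cor.~1 and Thm.~1]{MP} extends from the specific family $A\,Q^{-z}$ to an arbitrary holomorphic family of geometric pseudodifferential operators. The essential point to check is that $A'(0)$, obtained by $z$-differentiation at a differential-operator value, is log-polyhomogeneous and that its residue density is determined by finitely many symbol components each of which, by holomorphic dependence, retains polynomiality in the jets of $g$ and of $\omega$; once this is in place Gilkey's classification of invariant forms applies unchanged.
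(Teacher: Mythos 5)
Your proposal is correct and reaches the same conclusions, but the route you take for assertion~(1) differs from the paper's. The paper does \emph{not} differentiate the symbol identity $\sigma(\mathcal A(z))\sim\reallywidetilde{\sigma(A(z))}$ directly; instead it works at the level of integrated residues: it applies the lifted-residue identity of Proposition~\ref{prop:ResTRliftedA} to the multiplied families $\phi\,\mathcal A'(0)$ and $\phi\,A'(0)$, where $\phi$ ranges over all smooth $\Gamma$-invariant functions on $\wt M$, obtaining $\int_F {\rm Res}_x(\phi\,\mathcal A'(0))\,dx = \int_M {\rm Res}_x(\phi\,A'(0))\,dx$ and then deduces the pointwise density identity by arbitrariness of $\phi$. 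Your argument instead passes to the symbol level: you observe that the lift map $\reallywidetilde{(\cdot)}$ is $z$-independent so commutes with $\partial_z$ at $z=0$, giving $\sigma(\mathcal A'(0))\sim\reallywidetilde{\sigma(A'(0))}$ locally, and you then read off the density identity from the $(-n)$-homogeneous term. Your route is more elementary and local, and it avoids the detour through testing against $\phi$; the paper's route is more in the spirit of its general \lq\lq local linear form\rq\rq{} machinery and avoids having to speak of differentiating an equivalence mod smoothing. Both are valid. For assertion~(2) you and the paper argue identically (lifted defect formula \eqref{eq:PSclassicalopdiff-bis} plus $\mathrm{TR}_\Gamma(\mathcal A(0))=0$ plus Gilkey's invariance theory). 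You also correctly flag, as the genuinely delicate point, the extension of the results of \cite{MP} from the family $A\,Q^{-z}$ to a general geometric holomorphic family with differential value at $z=0$; the paper handles this exactly as you do, by asserting that the proof of \cite[Corollary~1 and Theorem~1]{MP} carries over verbatim, without spelling out the details.
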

\begin{rk}
	  In  \eqref{eq:covresiduedensity} the  residue densities are defined for classical pseudodifferential operators whereas ${\mathcal A}^\prime(0)$  is a log-polyhomogeneous operator (see e.g. \cite{Le} and references therein) but it follows from the previous results that they extend to derivatives 
	 ${\mathcal A}^\prime(0)$ whenever ${\mathcal A} (0)$ is differential.
\end{rk}
\begin{proof}
	In order to get identities on the level of densities, we need to apply the above results to families    $\phi\, {\mathcal A}(z)$ for any $\Gamma$-invariant function $\phi$ on $\wt M$.  
	
	\begin{enumerate}\item  Since ${\mathcal A}(0)$ and $A(0)$ are differential operators, we know that  $\phi\, {\mathcal A}^\prime (0)$ and   $\phi\, A^\prime (0)$ (which are not classical operators) have well defined Wodzicki residue. The fact that the Wodzicki residue canonically lifts to coverings (Proposition  \ref{prop:ResTRliftedA}) applied to  $\phi\, {\mathcal A}^\prime (0) $  then yields for any smooth $\Gamma$-invariant function $   \phi$ on $\wt M$
		\[\int_F {\rm Res}_x\left(\phi\, {\mathcal A}^\prime(0)\right)\, dx={\rm Res}_\Gamma\left(\phi\, {\mathcal A}^\prime (0)\right)= {\rm Res} \left(\phi\, { A}^\prime (0)\right)= \int_M {\rm Res}_x\left(\phi\, A^\prime(0)\right)\, dx \]
		from 	which  we deduce the identity on the level of densities:
		\[{\rm Res}_{x}({\mathcal A}^\prime(0))=\reallywidetilde{{\rm Res}_{\pi(x)}(A^\prime(0))}. \] 
		\item 	The second statement  follows from combining   \eqref{eq:PSclassicalopdiff-bis}  which relates the finite part of  the canonical trace  to the residue applied to $\phi\, {\mathcal A}(z)$ for any $\Gamma$-invariant function $\phi$ on $\wt M$, with  Gilkey's theory of invariants  \cite[Theorem
		2.6.2]{G} since  ${\mathcal P}^{g,\nabla^W}_{n,n,n} $
		is a polynomial in the $2$-jets of the metric and the one jets of the auxillary
		connection.
	\end{enumerate}
\end{proof}
Consequently, $\zeta$-spectral  invariants for geometric operators can be written as integrals of densities generated by Pontrjagin forms on the underlying manifold and Chern forms on the auxillary bundle. 
\begin{cor} With the notations of \eqref{eq:diffliftedregtraces}, the spectral zeta invariants \[\zeta_{A_\ve ( \mathbf  D ), Q_\ve(\mathbf \Delta)}(0)=	\zeta_{{\mathcal A}_\ve(\widetilde{\mathbf  D}),{\mathfrak Q}_\e(\widetilde{\mathbf  \Delta})}(0)\]
	 can be written as integrals of densities generated by Pontrjagin forms on the underlying manifold and Chern forms on the auxillary bundle. 
\end{cor}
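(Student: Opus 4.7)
The plan is to combine the two theorems that immediately precede the corollary. By the identity \eqref{eq:diffliftedregtraces} in Theorem \ref{thm:liftedregtraces} applied with $h\equiv 1$, we already know that $\zeta_{P(\mathbf D),Q_\ve(\mathbf\Delta)}(0)=\zeta^\Gamma_{P(\wt{\mathbf D}),\mathfrak Q_\ve(\wt{\mathbf\Delta})}(0)$, so it suffices to exhibit either side as an integral of a Gilkey-type invariant density. I would work on the base manifold $M$ side, where the statement is local and where Theorem \ref{thm:geometricop}(2) directly applies.

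First I would check that the holomorphic family $A(z):=P(\mathbf D)\,Q_\ve(\mathbf\Delta)^{-z}$ fits the hypotheses of Theorem \ref{thm:geometricop}. The value at $z=0$ is $A(0)=P(\mathbf D)$, a differential operator built from the geometric Dirac-type operator $\mathbf D$, hence geometric in the sense of the preceding definition (its coefficients are homogeneous in the jets of $g$ and of $\nabla^W$). The remaining point is to argue that $Q_\ve(\mathbf\Delta)^{-z}$ is itself a geometric holomorphic family: the Seeley-type construction defines the symbol of $Q_\ve(\mathbf\Delta)^{-z}$ by a Cauchy integral applied to the star-resolvent of $\sigma(Q_\ve(\mathbf\Delta))$, and since $\sigma(Q_\ve(\mathbf\Delta))\sim\sigma(\mathbf\Delta)$ modulo smoothing (the spectral projector added in \eqref{eq:Qeps} is smoothing and does not contribute to homogeneous components), the homogeneous components of $\sigma(Q_\ve(\mathbf\Delta)^{-z})$ are polynomial expressions in the jets of the Laplace-type symbol and its derivatives, hence homogeneous in the jets of $g$ and $\nabla^W$ of the appropriate degree. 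Multiplying by the geometric $P(\mathbf D)$ preserves this property because the $\star$-product is local and respects the grading.

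With geometricity of $A(z)$ in hand, Theorem \ref{thm:geometricop}(2) yields
\begin{equation*}
\zeta_{P(\mathbf D),Q_\ve(\mathbf\Delta)}(0)={\rm fp}_{z=0}{\rm TR}\bigl(A(z)\bigr)=\frac{1}{q}\int_M {\rm Res}_x\bigl(A'(0)\bigr)\,dx,
\end{equation*}
and the integrand ${\rm Res}_x(A'(0))$ lies in $\mathcal P^{g,\nabla^W}_{n,n,n}$. Invoking Gilkey's invariant theory \cite[Thm.~2.6.2]{G} --- cited already in the proof of Theorem \ref{thm:geometricop} --- then identifies this density as a universal polynomial in the Pontryagin forms of the Levi-Civita connection on $M$ and the Chern forms of $\nabla^W$ on the auxiliary bundle, giving the claimed expression. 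The covering identity \eqref{eq:diffliftedregtraces} then gives the same expression for the $\Gamma$-counterpart, after integrating over a fundamental domain $F$ instead of $M$, using Theorem \ref{thm:geometricop}(1).

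The only genuinely delicate step is checking that the smoothing perturbation $\chi_{[-\ve,\ve]}(\mathbf\Delta)$ in the definition of $Q_\ve(\mathbf\Delta)$ does not disturb the geometric character of the full symbol of the complex power. This is not entirely automatic because $\chi_{[-\ve,\ve]}(\mathbf\Delta)$ itself is not a geometric object, but since it is smoothing it does not contribute to any homogeneous component of the symbol of $Q_\ve(\mathbf\Delta)^{-z}$, and the residue density --- which is what enters in Theorem \ref{thm:geometricop} --- only depends on these homogeneous components. This is the main point that I expect to need the most care to justify cleanly.
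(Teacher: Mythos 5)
Your proof is correct and follows the same route as the paper: the paper simply states that the corollary follows from applying Theorem \ref{thm:geometricop} to the families $z\mapsto A_\ve(\mathbf D)\,Q_\ve(\mathbf\Delta)^{-z}$ on $M$ and $z\mapsto {\mathcal A}_\ve(\wt{\mathbf D})\,Q_\ve(\wt{\mathbf\Delta})^{-z}$ on $\wt M$, without further elaboration. You supply the verification that the paper leaves implicit, namely that the family $A(z)=P(\mathbf D)\,Q_\ve(\mathbf\Delta)^{-z}$ (i.e. the $h\equiv 1$ case, which is the only one for which the displayed equality holds and for which the hypothesis of Theorem \ref{thm:geometricop} that $A(0)$ be differential is satisfied) is indeed geometric; in particular you correctly observe that the smoothing perturbation $\chi_{[-\ve,\ve]}(\mathbf\Delta)$ in $Q_\ve(\mathbf\Delta)$ is invisible at the level of homogeneous symbol components, so the Seeley--resolvent construction of the symbol of $Q_\ve(\mathbf\Delta)^{-z}$ produces the same geometric homogeneous components as the resolvent of $\mathbf\Delta$ would. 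The only cosmetic difference is that you invoke Theorem \ref{thm:geometricop}(2) on the $M$ side and then transport the conclusion to $\wt M$ via \eqref{eq:diffliftedregtraces} together with Theorem \ref{thm:geometricop}(1), whereas the paper invokes the theorem on both sides directly; the content is the same.
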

\begin{proof} This follows from applying Theorem \ref{thm:geometricop} to the families $ z\mapsto  A_\ve  (\mathbf D)\,Q_\ve (\mathbf \Delta)^{-z}\in \Psi_{\rm cl}(M, E) $, resp. $z\mapsto  {\mathcal A}_\ve (\wt{\mathbf D})\,Q_\ve (\wt { \mathbf \Delta})^{-z}\in \Psi_{\Gamma,{\rm cl}}(\wt M, \wt E) $. 
	\end{proof}


\newpage
\small{\appendix
\section{Pseudodifferential operators on open subsets of $\R^n$}
\label{app:pdoRn}
We review known results following \cite{Sc, T}, thus fixing the notation needed in the paper.

Let $U\subset \R^n$ be an open subset and $k\in \N$. 
\begin{defn} 
\begin{itemize}
\item For any $m\in \R $, let $S^m(U\times \R^n, {\rm gl}_k(\C))$  be the set of functions $\s\in C^\infty\left(U\times \R^n,{\rm gl}_k(\C)\right)$ such that 
\begin{equation}\label{eq:A1estimate}  \forall \alpha, \beta \in \Z^n_{\geq 0},  \text{ for any compact }  K\subset U, \quad \exists C_{\alpha\beta K}>0, \quad \Vert\partial^\alpha_\xi\partial^\beta_x\s(x,\xi)\Vert\leq C_{\alpha\beta K}(1+|\xi|)^{\Re(m)-|\beta|},\end{equation} where $\Vert\cdot\Vert$ is the supremum norm on ${\rm gl}_k(\C)$,
\item For any $m\in \C$, let $S^m_{cl}(U,\C^k)$ be the space of classical symbols, \emph{i.e.} the set of symbols $\s\in S^{\Re(m)} (U\times \R^n,{\rm gl}_k(\C))$ such that there exists  $\s_{m-j}=\s_{m-j}(x,\xi)$ in $ C^\infty\left(U\times \R^n\setminus \{0\}, {\rm gl}_k(\C)\right) $, $j=0,1, 2,\dots$ with $\s_{m-j}$ positively homogeneous of degree $m-j$ with respect to $\xi$ obeying the following relation
$\s(x,\xi)\sim \sum_{j=0}^\infty \s_{m-j}(x,\xi)$ \emph{i.e.}, for an excision function $\chi(\xi)$ around zero (By an excision function around a point we mean a smooth function on $\R$ which vanishes in a neighborhood of the point and is one outside a ball of radius one centered at this point) and any positive integer $N $,
\begin{equation}
\label{eq:classical}
\s(x,\xi)-\sum_{j=0}^N \chi(\xi) \,\s_{m-j}(x,\xi)\in S^{\Re(m)-N}(U\times \R^n,{\rm gl}_k(\C)).
\end{equation}
\item We call  $S^{-\infty}(U\times \R^n,{\rm gl}_k(\C)):=\cap_{m\in \R}S^{m}(U\times \R^n,{\rm gl}_k(\C))=\cap_{m\in\C} S_{\rm cl}^{m}(U\times \R^n,{\rm gl}_k(\C))$ the space of smoothing symbols on $U$.
\end{itemize}
\end{defn} 
\begin{rk}
 For any $m\in \C$, the relation \begin{equation}\label{eq:simsymb}\sigma\sim\tau \Longleftrightarrow \sigma-\tau \in S^{-\infty}(U\times \R^n,{\rm gl}_k(\C))\Longleftrightarrow \sigma_{m-j}=\tau_{m-j}\quad \forall j\in \Z_{\geq 0}\end{equation} defines an equivalence relation on $S^m_{cl}(U\times \R^n,{\rm gl}_k(\C))$.
\end{rk}
By Schwartz's kernel theorem, to a  continuous linear operator $A\colon C^\infty_c(U, \C^k)\to C^\infty(U, \C^k)$  we assign its (uniquely defined) distributional  Schwartz kernel   $K_A\in {\mathcal D}^\prime(U\times E)\otimes {\rm gl}_k(\C)$. 
\begin{defn}\label{defn:PSDO} 
\begin{enumerate}
\item For any $m\in \R$ (resp. for any  $m\in \C$), a continuous linear operator $A\colon C^\infty_c(U, \C^k)\to C^\infty(U, \C^k)$ is {\bf  pseudodifferential (resp. a classical pseudodifferential operator) of order $m$} if it its kernel is an oscillatory integral of the type:
\begin{equation}\label{eq:oscint}K_A(x,y)= \frac{1}{(2\pi)^n}\int_{\R^n} e^{i\langle x-y,\xi\rangle} a(x,y,\xi)\, d\xi,\end{equation} where the amplitude $a(x,y,\xi)$ lies in the symbol space $S^m(U\times U\times \R^n, {\rm gl}_k(\C))$ (resp. $S^m_{\rm cl}(U\times U\times \R^n, {\rm gl}_k(\C))$.
\item To a given symbol $\s\in S^m(U\times \R^n, {\rm gl}_k(\C))$ one   assigns  the pseudodifferential operator 
   \begin{equation}
   \label{Op}
    (\Op(\s)u)(x)=\int_U\int_{\R^n} e^{i(x-y)\cdot \xi}\s(x,\xi)u(y)dyd\xi ,\;\; \forall u\in C_c^\infty(U,\C^k).
\end{equation} The operator $\Op(\s)$ is called {\bf classical} if $\sigma$ is.
\end{enumerate}  
\end{defn}
 \begin{rk}\label{rk:smoothkernel}\begin{enumerate}
\item Given a symbol $\sigma \in S^m(U\times \R^n, {\rm gl}_k(\C))$, the kernel $K(x,y)$ of the operator $\Op(\sigma)$ defines  a  tempered distribution in $x-y$, if moreover $\sigma \in S^{-\infty}(U\times \R^n, {\rm gl}_k(\C))$, then $K(x,y)$ is  smooth on $U\times U$.
\item Conversely \cite[Comments following Th.1]{EKS}, any smooth kernel $K$ on $U\times U$ is the kernel of a pseudodifferential operator with amplitude 
$a(x,y,\xi):=(2\pi)^n\, K(x,y)\,e^{-i\langle x-y,\xi\rangle}\, \psi(\xi)$, where $\psi$ is a smooth function with compact support in $\R^n$ such that $\int_{\R^n}\psi(\xi)\, d\xi=1$. The amplitude $a(x,y,\xi)$ lies in $S^{-\infty}(U\times \R^n, {\rm gl}_k(\C))$ since it is smooth in  $(x,y)$ and has compact support in $\xi$.
\\
\end{enumerate}
\end{rk}
The operator $\Op(\s)$ is a  properly supported (see e.g. \cite[Def. 3.6]{T}) pseudodifferential operator, a concept we now recall; 
 recall that the support of a distribution   is  the complement of the set on which the distribution vanishes. 
 \begin{defn}\label{defn:properly supported}Given an    open subset  $U$ of $\R^n$,  \begin{enumerate}  
 \item  a distribution $T\in {\mathcal D}^\prime(U\times U, {\rm gl}_k(\C)) $ is {\bf properly supported} if,  for any compact $K\subset U$, its support has compact intersection with $K\times U$ and $U\times K$  or equivalently, if  the restriction to the support of $T$  of the two canonical projection maps $U\times U\to U$ mapping $(x,y)$ to $x$ and $y$ respectively are proper maps (i.e., the preimage of a compact set is compact);
  \item 
an operator $A\colon C^\infty_c(U,\C^k)\to C^\infty( U,\C^k)$     is {\bf properly supported} if its Schwartz kernel  is.
  \end{enumerate}
\end{defn}
\begin{rk}Properly supported pseudodifferential operators stabilise $C^\infty_c(U,\C^k)$ and are therefore composable.
\end{rk}
\begin{prop}\label{prop:PsidoU} For any $m\in \R$ (resp. $m\in \C$), the  linear operator  $A\colon C^\infty_c(U, \C^k)\to C^\infty(U, \C^k)$ is a (resp. classical) pseudodifferential operator   of order $m$  if and only if it is    of the form
 \begin{equation}\label{eq:OpAT} A=\Op(\s_A)+S(A), \end{equation} where $\sigma(A)\in S^m(U\times \R^n, {\rm gl}_k(\C))$ (resp. $\sigma(A)\in S^m_{\rm cl}(U\times \R^n, {\rm gl}_k(\C))$) and $S(A)$ is an operator with a smooth kernel supported outside the diagonal of $U$.
\end{prop}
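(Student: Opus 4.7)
The direction $(\Longleftarrow)$ is immediate from Definition \ref{defn:PSDO} and Remark \ref{rk:smoothkernel}(2): the operator $\Op(\sigma_A)$ has amplitude $a(x,y,\xi):=\sigma_A(x,\xi)$, which is independent of $y$ and lies in $S^m$ (resp.\ $S^m_{\rm cl}$); and the smooth-kernel operator $S(A)$ can be written as an oscillatory integral with amplitude in $S^{-\infty}$. Their sum therefore has amplitude in $S^m$ (resp.\ $S^m_{\rm cl}$).

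For the direction $(\Longrightarrow)$, start from an amplitude $a(x,y,\xi)\in S^m(U\times U\times\R^n,{\rm gl}_k(\C))$ representing $A$ via \eqref{eq:oscint}. The plan is a two-step reduction: first localise near the diagonal, then reduce the localised amplitude to a symbol depending only on $(x,\xi)$. Pick a properly supported cutoff $\chi\in C^\infty(U\times U)$ with $\chi\equiv 1$ in a neighbourhood of the diagonal, and split $a=\chi a+(1-\chi)a$. On the support of $1-\chi$ one has $|x-y|\geq c>0$, so applying the differential operator $L_\xi=-i|x-y|^{-2}(x-y)\cdot\partial_\xi$, which leaves $e^{i\langle x-y,\xi\rangle}$ invariant, an arbitrary number of times and integrating by parts in $\xi$ yields an oscillatory integral that converges absolutely and is smooth in $(x,y)$; hence the operator with amplitude $(1-\chi)a$ has a smooth kernel supported in $\{(x,y)\,:\,(1-\chi)(x,y)\neq 0\}$, i.e.\ outside the diagonal, and can be absorbed into $S(A)$.

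It remains to treat the near-diagonal part with amplitude $b(x,y,\xi):=\chi(x,y)a(x,y,\xi)\in S^m$. Taylor-expand $b$ in $y$ about $y=x$:
\[
b(x,y,\xi)=\sum_{|\alpha|<N}\frac{(y-x)^\alpha}{\alpha!}\,\partial_y^\alpha b(x,x,\xi)+R_N(x,y,\xi),
\]
where $R_N$ is of the form $\sum_{|\alpha|=N}(y-x)^\alpha r_{N,\alpha}(x,y,\xi)$ with $r_{N,\alpha}\in S^m$. Using the identity $(y-x)^\alpha e^{i\langle x-y,\xi\rangle}=(-i)^{|\alpha|}\partial_\xi^\alpha e^{i\langle x-y,\xi\rangle}$ and integrating by parts in $\xi$, each term of the truncated Taylor sum contributes the operator $\Op(\sigma_{N})$ with
\[
\sigma_N(x,\xi):=\sum_{|\alpha|<N}\frac{(-i)^{|\alpha|}}{\alpha!}\,\partial_\xi^\alpha\partial_y^\alpha a(x,y,\xi)\big|_{y=x},
\]
while the remainder term $R_N$ transforms into an amplitude of order $m-N$. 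By Borel's lemma, one finds a symbol $\sigma_A\in S^m$ with
\[
\sigma_A(x,\xi)\sim\sum_{\alpha}\frac{(-i)^{|\alpha|}}{\alpha!}\,\partial_\xi^\alpha\partial_y^\alpha a(x,y,\xi)\big|_{y=x};
\]
in the classical case each summand is (asymptotically) homogeneous of degree $m-|\alpha|$, so $\sigma_A\in S^m_{\rm cl}$. The resulting error $A-\Op(\sigma_A)-S_{\rm off}(A)$, where $S_{\rm off}(A)$ is the off-diagonal piece built in the first step, is then a continuous operator whose amplitude, being the asymptotic difference of a convergent Borel construction, lies in $S^{-\infty}$; by Remark \ref{rk:smoothkernel}(1) it has a smooth kernel on all of $U\times U$. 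A further application of the first-step cutoff splits this smooth-kernel operator into a properly supported piece (which can be absorbed into $\Op(\sigma_A)$ by modifying $\sigma_A$ by a smoothing symbol) and a piece with support outside the diagonal (absorbed into $S(A)$), completing the required decomposition.

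The main obstacle is the Taylor-expansion/integration-by-parts step, where one must keep careful track of seminorm estimates to confirm that the Borel sum lies in $S^m$ (respectively in $S^m_{\rm cl}$, where homogeneity must be preserved term by term) and that the remainder actually yields a smoothing operator; the rest of the argument is routine cutoff bookkeeping.
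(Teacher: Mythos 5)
Your proposal is correct and follows essentially the same route as the paper: cut off the kernel (equivalently, the amplitude) near the diagonal, observe that the off-diagonal piece is smooth and supported away from $\Delta_U$, and recognise the near-diagonal piece as a properly supported pseudodifferential operator of the form $\Op(\sigma_A)$. The only difference is that the paper simply cites textbook references (\cite[Thm.~3.8]{T}, \cite[(1.5.3.8)]{Sc}) for the reduction of a properly supported amplitude to a symbol, whereas you carry out the standard Taylor-expansion/Borel-summation argument explicitly; this adds detail but no new idea.
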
 
\begin{proof}
An operator of the form $\Op(\s)$ for some $\sigma\in S^m(U\times \R^n, {\rm gl}_k(\C)), m\in \R$, is a pseudodifferential operator of order $m$ with amplitude $a(x,y,\xi)= \sigma(x,\xi)$. On the other hand, by Remark \ref{rk:smoothkernel} 2), any smooth kernel has an amplitude  which lies in $S^{-\infty}(U\times U\times  \R^n, {\rm gl}_k(\C))$. Combining these two facts, we conclude that any operator $A$ of the form (\ref{eq:OpAT}) is a psedudodifferential operator of order $m$; it  is classical if its symbol $\sigma(A)$ is classical.\\
 Conversely, let $A$ be a (resp. classical) pseudodifferential operator of order $m$. Let $\ve>0$ and  $\chi_\ve$ be a smooth function  with support containing the diagonal $\Delta_U:=\{(x,x)\in U\times U\}$ such that $\chi_\ve$ is identically one on   $\{m\in U\times U\,: d(m, \Delta_U) \leq \frac{\ve}{2}\}$ and  identically zero on  $\{m\in U\times U\,:  d(m, \Delta_U) \geq \ve\}$. We split the Schwartz  kernel $K_A$  of $A$  accordingly
 $$K_A= \chi_\ve\, K_A+(1-\chi_\ve)K_A\ .$$ 
 Both   projection maps ${\rm supp}(\chi_\ve)\to U$ are proper, so $\chi_\ve\, K_A$   is properly supported and corresponds to the kernel of some pseudodifferential operator $\Op(\s_A) $  with symbol $\sigma(A)(x,\xi)=e^{-i\langle x, \xi\rangle}\, A\, e^{i\langle x, \xi\rangle}$ in $S^m(U\times \R^n,{\rm gl}_k(\C))$ (see e.g.  \cite[Thm. 3.8]{T},  \cite[(1.5.3.8)]{Sc} and the comments that follow). The kernel $(1-\chi_\ve)K_A$ is  smooth and  supported outside the diagonal $\Delta_U$.
\end{proof} 
 
This leads to the following definition.
\begin{defn} 
\begin{itemize}
\item For any $m\in \R$, let $\Psi^m(U,\C^k)$ is the class of all linear operators $A: C_c^\infty(U,\C^k) \to C^\infty(U,\C^k)$ of the form (\ref{eq:OpAT}), with symbol $\s_A\in S^m(U,{\rm gl}_k(\C))$ and $S(A)$ an operator with a smooth Schwartz kernel $K_{S(A)}\in C^\infty(U\times U,{\rm gl}_k(\C))$ supported outside the diagonal of $U$;
\item For any $m\in \C$, $\Psi_{cl}^m(U,\C^k)$ is the class of all linear operators $A\in \Psi^m(U,\C^k)$ whose  symbol  $\s_A$ lies in $ S_{cl}^m(U\times \R^n,{\rm gl}_k(\C))$.
\end{itemize}
Let 
$$\Psi^{-\infty}(U,\C^k):= \cap_{m\in \R}\Psi^m(U,\C^k)=\cap_{m\in \C}\Psi_{cl}^m(U,\C^k)
$$
 denote the space of smoothing operators, those whose symbols lie in $S^{-\infty}(U,{\rm gl}_k(\C))$.
\end{defn}
 Let us state an easy yet very useful result, the proof of which we omit here, referring the reader to any classical textbook on pseudodifferential operators.
 \begin{cor} For any $m\in \R$ (resp. $m\in \C$), the sets
$\Psi^m(U,\C^k)$ (resp. $\Psi_{cl}^m(U,\C^k)$) are stable under summation $A\mapsto A+R$ with an operator $R$ whose kernel is smooth.  Moreover, the relation
\begin{equation}\label{eq:sim}A\sim B\Longleftrightarrow A-B\;\text{ has a smooth kernel}\end{equation} defines an equivalence relation on $\Psi^m(U,\C^k)$ (resp. $\Psi_{cl}^m(U,\C^k)$) and  for any two elements $A,B\in \Psi_{\rm cl}^m(U,\C^k)$ we have
\begin{equation}\label{eq:simop}A\sim B\Longleftrightarrow \sigma(A)\sim \sigma(B)\Longleftrightarrow \sigma_{m-j}(A)=  \sigma_{m-j}(B)\quad\forall j\in \Z_{\geq 0}.\end{equation}
\end{cor}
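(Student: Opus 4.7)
The plan is to verify the three claims of the corollary in order, using the canonical decomposition $A=\Op(\sigma_A)+S(A)$ supplied by Proposition \ref{prop:PsidoU} together with both parts of Remark \ref{rk:smoothkernel}.

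For the stability claim, I would first apply Remark \ref{rk:smoothkernel}(2) to realise an operator $R$ with smooth kernel as an oscillatory integral with amplitude in $S^{-\infty}$, so that $R\in\Psi^{-\infty}(U,\C^k)$. Proposition \ref{prop:PsidoU} applied to $R$ then yields a decomposition $R=\Op(\sigma_R)+S(R)$ with $\sigma_R\in S^{-\infty}$ and $S(R)$ smooth off the diagonal, hence $A+R=\Op(\sigma_A+\sigma_R)+(S(A)+S(R))$ lies in $\Psi^m(U,\C^k)$, using $S^{-\infty}\subset S^m$ and the fact that the sum of two smooth off-diagonal kernels is smooth off the diagonal. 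The classical case is identical, since the homogeneous components of $\sigma_A+\sigma_R$ coincide with those of $\sigma_A$. The equivalence-relation claim is then immediate: the zero kernel is smooth (reflexivity), the negative of a smooth kernel is smooth (symmetry), and the sum of two smooth kernels is smooth (transitivity).

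For the symbolic characterisation in the third claim, I split $A-B=\Op(\sigma(A)-\sigma(B))+(S(A)-S(B))$. The implication $\sigma(A)\sim\sigma(B)\Rightarrow A\sim B$ follows at once from Remark \ref{rk:smoothkernel}(1): $\sigma(A)-\sigma(B)\in S^{-\infty}$ gives a smooth kernel, and $S(A)-S(B)$ is already smooth. The converse is the main obstacle: after subtracting $S(A)-S(B)$ one must show that a properly supported classical operator $\Op(\tau)$ whose Schwartz kernel is smooth satisfies $\tau\in S^{-\infty}$. I would recover the symbol from the kernel by partial Fourier transform in the local variable $z=y-x$, namely $\tau(x,\xi)=\int K(x,x+z)\,e^{i\langle z,\xi\rangle}\,dz$ up to a normalisation constant. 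Proper support of $\Op(\tau)$ guarantees that $z\mapsto K(x,x+z)$ has compact support for each fixed $x$, and smoothness of $K$ in both variables then forces $\tau(x,\xi)$ and all its $x$-derivatives to decay faster than any power of $|\xi|$, uniformly on compacta in $x$; hence $\tau\in S^{-\infty}$. The second biconditional in \eqref{eq:simop} is then precisely the defining equivalence \eqref{eq:simsymb} for classical symbols, so all three steps combine to give the stated characterisation.
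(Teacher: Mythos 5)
Your proof is correct, and since the paper explicitly omits its own argument (``referring the reader to any classical textbook on pseudodifferential operators''), what you have written is in fact the standard textbook proof the authors have in mind. The ingredients are exactly right: the decomposition $A=\Op(\sigma_A)+S(A)$ from Proposition \ref{prop:PsidoU}, Remark \ref{rk:smoothkernel}(2) to see that an operator with smooth kernel belongs to $\Psi^{-\infty}(U,\C^k)$ (and hence to $\Psi^m(U,\C^k)$ for every $m$), Remark \ref{rk:smoothkernel}(1) for the easy implication $\sigma(A)\sim\sigma(B)\Rightarrow A\sim B$, and equation \eqref{eq:simsymb} for the second biconditional in \eqref{eq:simop}. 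You also correctly identify the one step with real content, namely that a properly supported operator $\Op(\tau)$ with smooth Schwartz kernel must have $\tau\in S^{-\infty}$, and your Fourier-inversion plus integration-by-parts sketch is the right way to extract rapid decay of $\tau(x,\cdot)$ from smoothness and $z$-compactness of $z\mapsto K(x,x+z)$; note that the locally uniform constants in \eqref{eq:A1estimate} come from the fact that proper support gives compact $z$-support uniformly for $x$ in a compact set, which you should make explicit if you write this out. One further remark worth internalising: the proper support of $\Op(\tau)$ that your argument leans on is a feature of this paper's specific construction (the kernel is truncated near the diagonal by the cutoff $\chi_\ve$ in the proof of Proposition \ref{prop:PsidoU}); in treatments where $\Op(\tau)$ is defined purely by the oscillatory integral \eqref{Op} and is therefore not properly supported, the same conclusion still holds, with ``compact support in $z$'' replaced by the rapid decay $|z^\alpha\partial_x^\beta K(x,x+z)|\lesssim 1$ away from $z=0$ that every symbol in $S^m$ already enjoys.
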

Whereas differential operators are local, pseudo-differential operators are not local for they smear out the supports of the sections on which they act. However, they are  pseudo-local in so far as they do not smear out  their singular supports.  The following definition captures some features of  pseudo-locality.
\begin{defn} \label{defn:Clocal}\cite[Def. 3 pag. 98]{Shu} An operator $A\colon C^\infty_c(U,\C^k)\to C^\infty( U,\C^k)$   with Schwartz kernel $K_A$  is  {\bf $C$-local} for some $C\geq 0$, if $K_A(x,y)=0$ $\forall x, y$ with $\vert x-y\vert>C$ or equivalently, if  
$$\forall  u\in C^\infty_c(U,\C^k); \quad \supp (Au) \subset \{x: d(x, \supp u) \leq C\}.$$
\end{defn}
\begin{ex}Differential operators are $0$-local   operators.
\end{ex} 
\begin{rk}
$C$-local operators are properly supported.
\end{rk}

\smallskip
\section{Coverings and $\Gamma$-Hilbert modules}
\label{sec:appHM}
 We recall   some useful definitions, referring to \cite{Sch2}. 
\begin{defn}Let $A$ be a $C^*$-algebra.\begin{enumerate}
\item An $A$-Hilbert module is a right $A$-module $V$ with an $A$-valued inner product $\langle\cdot, \cdot\rangle$ obeying the expected properties, namely it is $A$-sesquilinear, and for any $v\in V$ the expression $\langle v, v\rangle$ is a non-negative self-adjoint element in $A$ whose square root defines a norm on $V$ for which $V$ is complete. In particular,  $A^n$ is a Hilbert module for any $n\in \N$.
\item A
finitely generated projective Hilbert
$A$-module $V$ is a Hilbert $A$-module which is isomorphic as Hilbert $A$-modules to a (closed) orthogonal direct summand of $A^n$ 
for suitable $n\in \N$. In other words, there is a Hilbert
$A$-module $W$ such that $V\oplus W \simeq A^n$. 
The range of the projection  $P:A^n\to A^n$  with range $V$ and kernel $W$ is a finitely generated projective Hilbert module.
\end{enumerate} 
\end{defn}
For a given $A$-Hilbert module $V$, let ${\rm End}_{A}(V)$ be  the algebra  of Hilbert module morphisms, 
 namely  continuous  $A$-linear maps on $V$ which have an adjoint. Then   
$${\rm End}_{A}(V)\simeq {\mathcal B}(V)\otimes A $$ is a $C^*$-algebra.
If $A$ is a von Neumann algebra, so is ${\rm End}_{A}(V)$ a von Neumann algebra \cite{La}.\\

We now apply the previous construction to the von Neumann algebra  of a  countable discrete group $\Gamma$, which we first define. Let $\ell_2(\Gamma)$ be the  completion of $\C\, \Gamma$ for the norm $\Vert \sum_{g\in \Gamma} a_g\, g\Vert_2=\sqrt{\sum_{g\in \Gamma} \vert a_g\vert^2}$.  {We briefly ecall that a von Neumann algebra  is a $*$-algebra of bounded operators on a Hilbert space that is closed in the weak operator topology and contains the identity operator. The weak operator topology is  the weakest topology on the set of bounded operators on a Hilbert space, such that the functional sending an operator $T $ to the complex number $<Tx, y>$ is continuous for any vectors $x$ and $y$ in the Hilbert space.} The group  $\Gamma$ acts on $\ell_2(\Gamma)$  by the right regular representation $h\cdot\left( \sum_{g\in \Gamma} a_g\, g\right)= \sum_{g\in \Gamma} a_g\, gh^{-1}$. Equipped with the inner product $$\left\langle \sum_{g\in \Gamma} a_g\, g,\sum_{g\in \Gamma} b_g\, g\right\rangle:=\sum_{g\in \Gamma} a_g\, \overline {b_g},$$
 $\ell_2(\Gamma)$ is a Hilbert space and via the right regular representation, $\C \G$ can be viewed as a subalgebra of the $C^*$-algebra ${\mathcal B}\left(\ell_2(\Gamma)\right)$ of bounded linear operators on $\ell_2(\Gamma)$. The group von Neumann algebra
${\mathcal N}\Gamma$  is the closure of
$\C\,\Gamma$  in
${\mathcal B}\left(\ell_2(\Gamma)\right)$ with respect to the weak operator topology.
\begin{ex} Take  $\Gamma=\mathbb Z^n$. The Fourier transform gives an isometric $\mathbb Z^n$-equivariant isomorphism $\ell^2(\mathbb Z^n)\to L^2(\T^n)$, where $\T^n$ is the $n$-dimensional torus. 
Therefore $\mathcal N\mathbb Z^n$ coincides with the commutant $\mathcal L(L^2(T^n))^{\mathbb Z_n}$ of the $\mathbb Z_n$-action on $L^2(\T^n)$, and one obtains an isomorphism  $\mathcal N\mathbb Z^n\simeq L^\infty(\T^n)$. 
\end{ex}  
\begin{defn} 
 \begin{enumerate}
 \item The {\bf Kaplansky trace}, also called {\bf $\Gamma$-trace} on ${\mathcal N }\Gamma$ is  the linear form ${\rm tr}_\Gamma: {\mathcal N} \Gamma\to \C$ defined as $${\rm tr}_\Gamma(a)=\langle a\,  e, e\rangle= a_e,$$
 where $a =  \sum_{g\in \Gamma} a_g\, g$ and $e\in \C\Gamma$ is the element with all components zero outside the first one which is one. 
 \item The  $\Gamma$-{\bf dimension} of a finitely generated $\Gamma$-module $V$ represented by an idempotent matrix $P=(p_{ij})\in {\rm gl}_n\left({\mathcal N}\Gamma\right)$ is   $$ {\rm dim}_\Gamma(V):=\sum_{i=1}^n {\rm tr}_\Gamma (    p_{ii})=  {\rm tr}_\Gamma \left(  {\rm tr}_{{\rm gl}_n(\C)}(  P)\right), $$ where ${\rm tr}_{{\rm gl}_n(\C)}$ is the matrix trace given by the sum of the diagonal elements of the matrix with complex entries.
\item  More generally, the $\Gamma$-trace extends to  any operator $T\in {\rm End}_{{\mathcal N} \Gamma}(V)\simeq {\mathcal B}(V)\otimes {\mathcal N} \Gamma$ by
$${\rm tr}_\Gamma(T) := {\rm tr}_\Gamma\left( {\rm tr}_{{\rm gl}_n(\C)}(TP)\right).$$
\end{enumerate}
\end{defn} 
\begin{ex}
 The Kaplansky   trace on  $\mathcal N\mathbb Z^n\simeq L^\infty(\T^n)$ is  given by  the integration map against the canonical volume measure on the flat torus:
\begin{equation}\label{ex:tauZn}
\tau(f)=\int_{\T^n}fd\mu. 
\end{equation}
\end{ex}
Let $M$ be a closed manifold with $\pi_1(M)=\Gamma$, let $\pi\colon \widetilde M\to M$ be a universal covering of $M$, with $\Gamma$ acting on the right by deck transformations. Then $ \ell^2(\Gamma)$ is a finite type $\mathcal N\Gamma$-module for $\ell^2(\Gamma)\simeq (\mathcal N\Gamma)_2$, the $L^2$-closure of $\mathcal N\Gamma$. More generally, we have
\begin{lem}Let $\wt M$ be a $\Gamma$-covering of $M$. The bundle $\mathcal H\to M$   defined by
$$
\mathcal H=\wt M\times_\G \ell^2\Gamma  
$$
  is a flat bundle of finitely generated projective $\mathcal N \Gamma$-modules. 
  \end{lem}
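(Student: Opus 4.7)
The plan is to establish three points in sequence: $\mathcal{H}$ is a locally trivial vector bundle over $M$, each fibre carries a canonical finitely generated projective Hilbert $\mathcal{N}\Gamma$-module structure, and the bundle admits a flat structure. For the first point, I would construct $\mathcal{H}=\wt M\times_\Gamma\ell^2(\Gamma)$ as the standard associated bundle to the principal $\Gamma$-bundle $\pi\colon\wt M\to M$, using the unitary $\Gamma$-representation on $\ell^2(\Gamma)$ furnished by the right regular representation recalled just above the lemma. Local trivializations over an evenly covered open $U\subset M$ arise by choosing a local section $s\colon U\to\pi^{-1}(U)$, yielding $\mathcal{H}|_U\simeq U\times\ell^2(\Gamma)$.

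For the fibrewise Hilbert $\mathcal{N}\Gamma$-module structure, the crucial observation is that the $\mathcal{N}\Gamma$-action on $\ell^2(\Gamma)$ commutes with the $\Gamma$-action used in the associated-bundle quotient. This holds because $\mathcal{N}\Gamma$ is the weak closure of one-sided multiplications in $\C\Gamma$, while the associated-bundle equivalence relation uses the opposite-sided action, and left and right multiplications in $\C\Gamma$ commute by associativity. Hence the $\mathcal{N}\Gamma$-action descends to each fibre, making it canonically isomorphic as Hilbert $\mathcal{N}\Gamma$-module to $\ell^2(\Gamma)\simeq(\mathcal{N}\Gamma)_2$, the Hilbert-module completion of the rank-one free module $\mathcal{N}\Gamma$. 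This is finitely generated projective with projector the identity in $\mathrm{End}_{\mathcal{N}\Gamma}((\mathcal{N}\Gamma)^1)$, which settles the second claim.

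Finally, flatness follows from the discreteness of $\Gamma$. For any two local sections $s_\alpha,s_\beta$ of $\pi$ over a connected overlap $U_\alpha\cap U_\beta$, the cocycle $g_{\alpha\beta}\colon U_\alpha\cap U_\beta\to\Gamma$ defined by $s_\beta = s_\alpha\cdot g_{\alpha\beta}$ is a continuous map into a discrete target, hence locally constant. The induced transition maps on $\mathcal{H}$ act on fibres by the corresponding elements of $\mathcal{N}\Gamma\subset\mathcal{B}(\ell^2(\Gamma))$ and are therefore themselves locally constant, which is precisely the definition of a flat structure on $\mathcal{H}$. The main bookkeeping obstacle I foresee is verifying the compatibility of the $\mathcal{N}\Gamma$-action with the associated-bundle equivalence relation in the paper's conventions (i.e. untangling which side of the regular representation is used where), but this is a routine check that does not affect the conclusion.
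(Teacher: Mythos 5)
Your proposal is correct and follows essentially the same route as the paper's own (very terse) proof: both arguments hinge on the commutation of the $\Gamma$-action used in the associated-bundle quotient with the $\mathcal{N}\Gamma$-action on $\ell^2(\Gamma)$ (opposite-sided multiplications), which makes each fibre a finitely generated projective Hilbert $\mathcal{N}\Gamma$-module isomorphic to $(\mathcal{N}\Gamma)_2$, and both obtain flatness from the locally constant transition functions coming from the discreteness of $\Gamma$. Your version merely spells out the steps the paper leaves implicit.
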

  \begin{proof}
This follows from the fact that the left $\Gamma$-action   and the right $\mathcal N\Gamma$-action   on $\ell^2(\Gamma)$ commute so $\mathcal H:=\widetilde M\times_\Gamma \ell^2 (\Gamma)$ is a finitely generated projective bundle of (right) $\mathcal N\Gamma$-Hilbert modules over $M$. Moreover, $\mathcal H$ is endowed with a flat structure since the transition functions are locally constant.
  \end{proof}

There is a well known dictionary between the space of $L^2$-sections of $\widetilde E$ on $\widetilde M$ and the sections of the bundle $E$ twisted by $\mathcal H$, for which we refer for example to \cite[7.5]{Sch2} or \cite[Prop. E.6]{PzSc} for full details. 

Let us first recall that the space $C_c(\widetilde M, \widetilde E)$ is a right $\mathbb C\Gamma$-module with structure given by $(\xi\cdot s)(\tilde x)=\sum_{g\in \Gamma} (R^*_g \xi )(\tilde x)s(g^{-1})$, where $R^*_g$ denotes the pullback map, $s$ is in $ \C\Gamma$ and $\xi\in C_c(\widetilde M, \widetilde E)$. By completion, this endows  $L^2(\widetilde M,\widetilde E)$ with the structure of right $\mathcal N\Gamma$-module. 
\begin{defn}
Let $s$  be a section of $\widetilde E$. Define the section  
$\Phi( s)$  of $E\otimes \mathcal H$ by
$$\Phi(s)(x)=\hat s(x)=\sum_{\gamma \in \Gamma} s(\gamma \tilde x)\otimes [\tilde x, \gamma]$$
 where $\tilde x$ is an arbitrary lift of $x\in M$, and $\widetilde E_{ \gamma \tilde x} $ is identified with  $E_{x}$.
 \end{defn}
The map $s\mapsto \hat s=\Phi(s)$ identifies $\{s\in C_c^\infty(\widetilde M, \widetilde E ) \,|\, \sum_\gamma |s(\gamma x)|^2<\infty\}$ with $ 	C^\infty\left( M, E\otimes \mathcal H\right)$.
It extends to an isometry $\ds\Phi: L^2(\tilde M,\widetilde E)\to L^2( M, E\otimes \mathcal H) $  of $\mathcal N\Gamma$-Hilbert modules.
\begin{ex}
\label{ex:covTn}
To the universal covering $\pi\colon \R^n \to \T^n$ with fundamental group $\Z^n$ corresponds the finitely generated projective bundle $\mathcal H:=\R^n\times_{\Z^n} \ell^2 (\Z^n)$   of (right) $L^\infty(\T^n)$-Hilbert modules over $\T^n$.  In this correspondence, $L^2$-functions on $\R^n$ are viewed as  $L^2$-sections of the bundle ${\mathcal H}$ over $\T^n$ via the map which sends $f$ to $\hat f\colon x \mapsto \sum_{\gamma \in \Z^n} f(\gamma \pi^{-1}(x))\otimes [\pi^{-1}(x), \gamma]$.  This induces  an isometry 
\begin{equation}\label{PHI}
\Phi\colon  L^2(\R^n)\simeq L^2(\T^n)\otimes \ell^2(\Z^n)\longrightarrow L^2(\T^n,   \mathcal H)
\end{equation}
which sends $\{f\in \Ci(\R^n) : \sum_{\gamma \in \Z^n} |f(\gamma x)|^2 <\infty \; \forall x\in \R^n \}$ to $\Ci(\T^n,\mathcal H)$.
 \end{ex} 

\begin{defn} To any first-order differential operator $D$ on $C^\infty(M, E)$, one can assign the first order differential operator $D_{\mathcal H}$
defined on  a section $\hat s(x):= \sum_\gamma \tilde s(\gamma x)\otimes  [\tilde x,\gamma]$ of $E\otimes \mathcal H$  as \begin{equation}\label{eq:DH}
D_{\mathcal H}\left(\hat s\right)(x):=\sum_{\gamma\in \Gamma}\tilde D\tilde s(\gamma \tilde x)\otimes [\tilde x,\gamma],
\end{equation}
locally on an evenly  covered neighborhood $U$ of $x\in M$, with lift $\widetilde U$ an open set such that $\pi_{U}$ is a diffeomorphism, so that  $U\ni y\mapsto [\tilde y, \gamma]$ is a flat section of $\mathcal H$ for all $\gamma \in \Gamma$.
 \end{defn} 
The following proposition is proved in \cite{Sch2} (see also \cite[Proposition 3.12]{BP}). We sketch the proof in this appendix for the sake of completeness. 
\begin{prop}\label{prop:dict}(\cite{Sch2}) The map  $$\Phi: L^2(\tilde M,\widetilde E)\to L^2( M, E\otimes \mathcal H) $$ yields  an isometry of $\mathcal N\Gamma$-Hilbert modules 
 which   for any first-order differential operator $D$ on $C^\infty(M, E)$ satifies
 \begin{enumerate}
\item \label{eq:relationH}
$\Phi^\sharp  \wt D:=\Phi \wt D\Phi^{-1}= D_{\mathcal H}$,
\item \label{eq:funct-calc} $\Phi^\sharp \left(h(\wt D)\right)=h(D_\mathcal H)$, for every bounded measurable function $h:\R\to\R$. 
\end{enumerate}
 \end{prop}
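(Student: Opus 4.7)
The plan is to prove the three statements in succession: the isometry of Hilbert $\mathcal{N}\Gamma$-modules, the operator identity $\Phi^\sharp \wt D = D_{\mathcal H}$, and finally the functional calculus statement as a consequence of the first two.

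For the \emph{isometry}, I would fix a fundamental domain $F \subset \wt M$ for the $\Gamma$-action, so that $\wt M = \bigsqcup_{\gamma \in \Gamma} \gamma F$. On a dense subspace (compactly supported smooth sections satisfying the summability condition) I would directly compute the $\mathcal{N}\Gamma$-valued inner product
$$\langle \Phi s, \Phi s'\rangle_{\mathcal{N}\Gamma} = \int_F \sum_{\gamma,\gamma'\in\Gamma} \langle s(\gamma\wt x), s'(\gamma'\wt x)\rangle_{\wt E} \,\langle [\wt x,\gamma], [\wt x,\gamma']\rangle_{\mathcal{N}\Gamma}\, d\wt x.$$
The key point is that the family $\{[\wt x,\gamma]\}_{\gamma\in\Gamma}$ in $\mathcal H_x$ corresponds to the orthonormal basis of $\ell_2(\Gamma)$ under the right regular representation, so the double sum collapses (the pairing being $\gamma^{-1}\gamma' \in \mathcal{N}\Gamma$, equal to $e$ when $\gamma=\gamma'$); the change of variable $\wt y = \gamma\wt x$ then reconstructs the integral over $\wt M$, matching the $\mathcal{N}\Gamma$-valued inner product on $L^2(\wt M,\wt E)$. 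The right $\mathcal{N}\Gamma$-linearity of $\Phi$ is checked on $\mathbb C\Gamma$-generators via the right regular representation.

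For the \emph{intertwining} \eqref{eq:relationH}, the very definition \eqref{eq:DH} reads $D_{\mathcal H}(\Phi s) = \Phi(\wt D s)$, so the content of the statement is to show that the right-hand side of \eqref{eq:DH} actually defines a bona fide first-order differential operator on $E\otimes \mathcal H$, independent of the choices of local lift and trivialisation. This is exactly where the flatness of $\mathcal H$ meets the locality of $D$: over an evenly covered $U\subset M$ with $\wt U$ mapping diffeomorphically to $U$, the local frame $\{y\mapsto [\wt y,\gamma]\}_{\gamma\in\Gamma}$ is flat, while $\Gamma$-invariance of $\wt D$ together with the pointwise identification $\wt E_{\gamma\wt x} \simeq E_x$ yields the local identity $(\wt D\wt s)(\gamma\wt x) = D(x\mapsto \wt s(\gamma\wt x))(x)$. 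Plugging this into \eqref{eq:DH} shows that in the flat frame $D_{\mathcal H}$ acts as $D\otimes \mathrm{Id}_{\mathcal H}$, so it is a genuine first-order differential operator; patching via a partition of unity shows independence of the choices.

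For the \emph{functional calculus}, since $D$ is an elliptic first-order differential operator on a closed Riemannian manifold both $\wt D$ and $D_{\mathcal H}$ are essentially self-adjoint (the former on $L^2(\wt M,\wt E)$ as a $\Gamma$-invariant lift, the latter by intertwining via $\Phi$), and $\Phi$ intertwines their cores. The spectral measures therefore transport along $\Phi$, giving $\Phi^\sharp h(\wt D)=h(D_{\mathcal H})$ for every bounded Borel $h$. The \textbf{main obstacle} is handling this step inside the Hilbert $\mathcal{N}\Gamma$-module category, where $D_{\mathcal H}$ must be treated as a regular self-adjoint operator (in the sense of Baaj--Julg) in order for $h(D_{\mathcal H})$ to be unambiguously defined; the cleanest route is to realise $h(\wt D)$ first via the standard Hilbert-space spectral theorem on $L^2(\wt M,\wt E)$, use that $\Phi$ is a unitary of Hilbert spaces and simultaneously an isomorphism of Hilbert $\mathcal{N}\Gamma$-modules, and then descend the identity to the module level.
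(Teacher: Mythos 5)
Your proposal is correct and follows essentially the same route as the paper's sketch: relation (1) is read off from the definition \eqref{eq:DH} once one checks that $D_{\mathcal H}$ is a well-defined first-order differential operator (flatness of $\mathcal H$ together with $\Gamma$-invariance of $\wt D$), and (2) follows by unitary equivalence of the essentially self-adjoint operators $\wt D$ and $D_{\mathcal H}$ together with the spectral theorem. Your fuller treatment of the isometry via a fundamental domain and of the Hilbert-module-versus-Hilbert-space issue in the functional calculus merely fills in details the paper delegates to the cited references \cite{Sch2} and \cite{BP}.
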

\begin{proof}
Relation \eqref{eq:relationH} follows from \eqref{eq:DH} and hence \eqref{eq:funct-calc} follows observing that since the unitary equivalence of the self-adjoint unbounded operators $\wt D$ and $D_{\mathcal H}$ implies the same property for all bounded measurable functions of the latter,  using functional calculus.
\end{proof}
\smallskip 
\section{The groupoid interpretation}
\label{sec:groupoids}
We use the notations of the previous appendix. An equivalent description of the calculus of $\Gamma$-invariant operators on the covering $\wt M$ involves the fundamental groupoid $G$ defined as the quotient space $$ G:=(\wt M\times \wt M)/\Gamma$$ of $\wt M\times \wt M$ by the diagonal action
$
\gamma\cdot (\tilde x,\tilde y ):=(\gamma\cdot\tilde x,\gamma\cdot\tilde y )
$.
The groupoid structure on $G$ is given  in terms of the space of units is  $G^{(0)}=M$  and the source and range maps  
$$
s [(\tilde x,\tilde y )]:=\pi(\tilde x)\;,\;\;\;\;\; r[(\tilde x,\tilde y )]:=\pi(\tilde y)\ .
$$
For further details on the structure of this groupoid we refer to \cite[Section 2]{BP} in  the more general setting of  the monodromy groupoid of a foliated bundle.

\medskip

Pseudodifferential calculus on groupoids was introduced by Connes \cite{Co}, Nistor--Weinstein--Xu \cite{NWX},  Monthubert--Pierrot \cite{MP} and by Vassout \cite{Vas} in relation with the Wodzicki residue. 
We will compare it here with the calculus developed by Vassout which bears the advantage of including a large subalgebra of non properly supported smoothing operators and is therefore well suited for the construction of complex powers \cite{Vas}.

Let  $G$ be a differentiable groupoid and $E\to G^{(0)}$ a vector bundle on its space of units, and for any $x\in G^{(0)}$ let $G_x:=s^{-1}(x)$.

One considers  families $A=(A_{x})_{x\in M}$, where $A_x$ is an operator acting on $C_c^\infty(G_x, r^*(E))$, satisfying the $G$-invariance condition
\begin{equation}\label{eq:GinvG}
A_{r(g)}U_g=U_g A_{d(g)}
\end{equation}
where $U_g: C^\infty(G_{s(g)}, r^*(E))\to C^\infty(G_{r(g)}, r^*(E))$ is $(U_g f)(g'):=f(g'g)$.
The support of a  family $A$ is is a subset of $\{(g,h)\in G\times G , s(g)=s(h)\}$ defined as
$$
\supp (A):=\bigcup_{x\in G^{(0)}}\supp (A_x).
$$  
\begin{defn}{}\cite[Definition 8]{NWX}
The family $A$ is said to be 
\begin{itemize}
\item \emph{properly supported}, if $p_i^{-1}(K)\cap \supp  (A)$ is compact for any compact subset $K\subset G$, where $p_1, p_2:G\times G\to G$ are the projections on the two factors;
\item  \emph{compactly supported}, if $\supp (A)$ is compact;
\item \emph{uniformly supported}, if $\mu_1(\supp (A))$ is compact (where $\mu_1(g, h):=gh^{-1}$).
\end{itemize}
\end{defn}

The algebra of $G$-pseudodifferential operators
 (resp. classical $G$-pseudodifferential operators in the sense of Vassout) consists of two building blocks: 
\begin{enumerate}
	\item[a)] an algebra $\Psi_{ \rm cpt}( G, E)$  whose elements are smooth (The degree of regularity can be chosen according to the needs) families $A=(A_{x})_{x\in M}$ where $A_x$ is a   (resp. classical) pseudodifferential operator acting on $C_c^\infty(G_x, r^*(E))$, such that $A$ is $G$-invariant and compactly  supported
	\item[b)] an algebra $\Psi^{-\infty}( G, E)$ of \emph{smoothing} operators  defined by means of a scale of Sobolev modules $H^s( \mathcal W)$, $s\in \R$, on the groupoid $C^*$-algebra  \cite[Proposition 4.2.5]{Vas} 	as   the intersection 
\begin{equation}
\label{eq:C*sobolev}
\Psi^{-\infty}( G)=\cap_{s,t} {\mathcal L}\left(H^s( \mathcal W), H^t( \mathcal W)\right)
\end{equation}
of the spaces of linear maps which take a Sobolev module $H^s( \mathcal W)$ to another Sobolev module  $H^t(\mathcal W)$. 
\end{enumerate}
\begin{defn}A (resp. classical) pseudodifferential operator in the sense of \cite{Vas} consists of a sum 
$$A=A_0+S(A), \quad A_0\in \Psi_{\rm cpt}( G, E), \;S(A)\in \Psi^{-\infty}( G, E),$$
(resp. with $A$ classical).
The linear space generated by   (resp. classical) pseudodifferential operators of order  $m \in \R$ (resp. $m\in \C$) is denoted by $\Psi^m(G, E)$ (resp. $\Psi^m_{\rm cl}(G, E)$) and the whole algebra generated by such operators by $\Psi(G, E)$ (resp. $\Psi_{\rm cl}(G, E)$).
\end{defn}
\begin{ex} In the case  of the pair groupoid $G=M\times M$  of a closed smooth manifold $G^{(0)}:=M$, $\Psi_{ \rm cpt}( G, E)$ corresponds to the set of  compactly supported  operators on $M$, which  in particular are properly supported \cite[Example 1]{NWX}, so $\Psi^m(G, E)=\Psi^m(M, E)$ for any $m\in \R$.
\end{ex}
We  now focus on the case  $G=(\wt M\times \wt M)/\Gamma$, and summarize  the relations between pseudodifferential operators on $\wt M$ of Section \ref{subsec:ups} with   pseudodifferential operators on  $ G$ given in \cite[Example  4]{NWX}, \cite[\textsection 6]{Vas}, see also \cite[\textsection 3.2]{BP}.

Each fibre $  G_x:=s^{-1}(x)$ can be identified to $\wt M$ via the map
\begin{equation}\label{eq:G_x}
  \rho_x:\wt M\to G_x \;\;\;,\, \;\tilde p\mapsto [\tilde x_0,\tilde p],
\end{equation}
where $\tilde x_0$ is a fixed element in $\pi^{-1}(x)$. 
The identification of $G_x$ with $\wt M$  is unique up to the action of $\Gamma$.

Given a family $A=(A_x)_{x\in M}$, the $G$-invariance condition \eqref{eq:GinvG} implies that $A_x$ is $\Gamma$-invariant. 
Hence the family $A$ can be identified with a $\Gamma$-invariant operator on $\wt M$.

 \begin{prop} {}\cite[Example  4]{NWX}, \cite[6]{Vas}
\label{prop:Gvscov}
Let $G=(\wt M\times \wt M)/\Gamma$. One has the following identifications:
\begin{enumerate}
\item   For any $m\in \R$ (resp. $m\in\C$)
$$ \Psi_{\rm cpt}^m( G, E)\simeq \{A\in \Psi_{\Gamma}^m (\wt M, \wt E), \,A \,\text{ is propertly supported}\}, $$
$$\quad{\rm resp.}\quad \Psi_{\rm cpt}^m( G)\cap \Psi_{\rm cl}(G, E)\simeq \{ A\in \Psi_{\Gamma}^m (\wt M, \wt E)\cap  \Psi_{\rm cl}(\wt M,\wt E)\, , \,A \,\text{ is propertly supported}\},$$ 
\item  $\Psi^{-\infty}( G, E)\simeq {\mathcal S}\Psi_{\Gamma}^{-\infty}(\wt M, \wt E)  $.
\item Consequently, $\Psi_{\rm cl}^m( G, E)\simeq \Upsi_{{\rm cl},\Gamma}^m (\wt M, \wt E)$.
\end{enumerate}
\end{prop}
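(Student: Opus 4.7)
The plan is to exploit the identifications $\rho_x\colon \wt M \to G_x$ of \eqref{eq:G_x} fibrewise, translating a $G$-invariant family $A = (A_x)_{x\in M}$ into a single operator on $\wt M$, and then matching the three support/regularity conditions appearing in the statement. The starting observation is that, under \eqref{eq:G_x}, the $G$-invariance \eqref{eq:GinvG} becomes precisely the invariance of a single operator $\mathcal A$ on $\wt M$ under the deck transformation action $L_\gamma^\sharp$; conversely, any $\Gamma$-invariant operator on $\wt M$ produces a family on $\{G_x\}_{x\in M}$ via pullback by $\rho_x$, and the ambiguity in the choice of base point $\tilde x_0$ in \eqref{eq:G_x} is absorbed by $\Gamma$-invariance. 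So once one checks that this bijection respects the symbolic structure (it does, since $\rho_x$ is locally a diffeomorphism and the symbol of a pseudodifferential operator is a coordinate-invariant object up to $\underset{\rm diag}{\sim}$), everything reduces to tracking the support and regularity conditions through this correspondence.

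For part (1), the key point is the identification of compact support in $G$ with proper support on $\wt M$: a family $A$ is compactly supported precisely when $\supp(A) \subset G \times G$ projects to a compact subset of $G = (\wt M\times\wt M)/\Gamma$, which by Remark \ref{rk:Gamma-sup} (cocompactness of $\wt M$) is exactly the condition that the associated $\Gamma$-invariant operator on $\wt M$ be properly supported. The fibrewise order-$m$ condition carries over verbatim since $\rho_x$ is a diffeomorphism onto $G_x$, and classicality of $A_x$ transfers to classicality of $\mathcal A$. For part (2), one identifies $\Psi^{-\infty}(G,E)$ with $\mathcal S\Psi^{-\infty}_\Gamma(\wt M, \wt E)$ by comparing the Sobolev-module definition \eqref{eq:C*sobolev} on the groupoid side with definition \eqref{def:Ssmooth}: the Sobolev modules $H^s(\mathcal W)$ on the groupoid $C^*$-algebra correspond, under $\Phi$ of Appendix \ref{sec:appHM} (or directly via $\rho_x$), to the $\Gamma$-equivariant Sobolev spaces on $\wt M$, and the intersection condition on bounded maps between all pairs $(H^s, H^t)$ is exactly the defining condition of $\mathcal S\Psi^{-\infty}$.

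Part (3) is then a consequence of parts (1) and (2). Any element of $\Psi^m_{\rm cl}(G,E)$ decomposes as $A_0 + S(A)$ with $A_0 \in \Psi^m_{\rm cl, cpt}(G,E)$ and $S(A) \in \Psi^{-\infty}(G,E)$; applying the two previous identifications yields a decomposition $\mathcal A = \mathcal A_0 + S(\mathcal A)$ with $\mathcal A_0$ a properly supported $\Gamma$-invariant classical pseudodifferential operator of order $m$ and $S(\mathcal A) \in \mathcal S\Psi^{-\infty}_\Gamma(\wt M, \wt E)$. To finish, one must check that this is exactly the data encoded in $\Upsi^m_{{\rm cl}, \Gamma}(\wt M, \wt E)$: the uniformity conditions \eqref{eq:uni-pdo}-\eqref{eq:uni-clpdo} on the symbol must be verified. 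This is the step I expect to require the most care: but it is automatic from $\Gamma$-invariance, because any "good" partition of unity on $\wt M$ can be chosen $\Gamma$-invariant (cf.\ \eqref{eq:tildechi}) so that the derivatives of the symbol only depend on finitely many orbits of local charts, and the estimates on one chart propagate uniformly to all translates. Conversely, any $\mathcal A \in \Upsi^m_{{\rm cl}, \Gamma}(\wt M, \wt E)$ can, by Lemma \ref{lem:A0S} applied $\Gamma$-equivariantly, be written as $\mathcal A_0 + S(\mathcal A)$ with $\mathcal A_0$ properly supported (by Remark \ref{rk:Gamma-sup}) and $S(\mathcal A) \in \Upsi^{-\infty}_\Gamma(\wt M, \wt E) = \mathcal S\Psi^{-\infty}_\Gamma(\wt M, \wt E)$ by Proposition \ref{prop:Comparison}, which completes the identification.
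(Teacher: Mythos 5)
The paper does not actually prove Proposition \ref{prop:Gvscov}: it cites \cite[Example 4]{NWX} and \cite[Section 6]{Vas} and moves on, so there is no in-house argument to compare yours against. That said, your sketch follows the route one would expect and is substantially correct. Using the fibrewise identifications $\rho_x$ to turn a $G$-invariant family into a single $\Gamma$-invariant operator on $\wt M$, then matching the three support/smoothing conditions, is the right organisation. In particular the use of Remark \ref{rk:Gamma-sup} to translate ``kernel compactly supported in $(\wt M\times\wt M)/\Gamma$'' into proper support on $\wt M$ is the right hinge for part (1); comparing \eqref{eq:C*sobolev} with \eqref{def:Ssmooth} via $\Phi$ is what \cite{Vas} does for part (2); and for part (3) the passage from ``properly supported in $\Psi_\Gamma^m$'' (part (1)) to the uniform class $\Upsi^m_{{\rm cl},\Gamma}$ is indeed for free, since $\Gamma$-invariance together with compactness of $M$ and the $\Gamma$-invariant partition of unity \eqref{eq:tildechi} give the uniformity estimates \eqref{eq:uni-pdo}--\eqref{eq:uni-clpdo}, and Proposition \ref{prop:Comparison} closes the gap between $\Upsi^{-\infty}_\Gamma$ and $\mathcal S\Psi^{-\infty}_\Gamma$.

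One imprecision to flag: you write that a family $A$ is compactly supported ``precisely when $\supp(A)\subset G\times G$ projects to a compact subset of $G$.'' That is not what compact support means in the definition quoted in the appendix (it means $\supp(A)$ itself is compact in $G\times_s G$); the condition that matches proper support on $\wt M$ via $\mu_1(g,h)=gh^{-1}$ is the \emph{uniform} support condition of \cite{NWX}. Strict compact support in $G\times_s G$ is far too strong for infinite $\Gamma$: the support of the identity family is all of the diagonal copy of $G$, which is non-compact, so even $\Gamma$-invariant differential operators would be excluded. You inherit this blur from the paper's own definition of $\Psi_{\rm cpt}(G,E)$, which says ``compactly supported'' where NWX's calculus for this groupoid works with ``uniformly supported''; but it is worth being explicit that your argument needs the uniform-support reading, since only then does $\mu_1(\supp(A))\subset G$ compact line up with Remark \ref{rk:Gamma-sup}'s characterisation of properly supported $\Gamma$-invariant operators.
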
}

\vskip .4cm
\noindent


\end{document}